\numberwithin{equation}{section}
\newtheorem{thm}{Theorem}[section]
\newtheorem{lem}[thm]{Lemma}
\newtheorem{prop}[thm]{Proposition}
\newtheorem{cor}[thm]{Corollary}
\newtheorem{mydef}[thm]{Definition}
\newcommand{\T}{\operatorname{T}}
\newcommand{\h}{\operatorname{ht}}
\newcommand{\R}{\mathbb R}
\newcommand{\SL}{\operatorname{SL}}
\newcommand{\Z}{ \mathbb{Z}}
\begin{document}
\title{Entropy and Escape of Mass for $\SL_{3}( \Z) \backslash\SL_3(\R)$}
\author{Manfred Einsiedler and Shirali Kadyrov}
\thanks{M.E. acknowledges the support of the NSF from the grant
0554373, and both authors acknowledge support by the SNF
(200021-127145).}

\begin{abstract}
We study the relation between measure theoretic entropy and escape of mass for the
case of a singular diagonal flow on the moduli space of three-dimensional unimodular lattices.
\end{abstract}
\maketitle

\section{Introduction}
Given a sequence of probability measures $\{\mu_i\}_{i=1}^\infty$ on a homogeneous space $X$, it is natural to ask what we can say about weak$^*$ limits of this sequence? Often one is interested in measures that are invariant under a transformation $T$ acting on $X$, and in this case weak$^*$ limits are clearly also invariant under $T$. If $X$ is non-compact, maybe the next question to ask is whether any weak$^*$ limit is a probability measure.
 If $T$ acts on $X=\Gamma\backslash G$ by a unipotent element where $G$ is a Lie group and $\Gamma$ is a lattice,
 then it is known that $\mu$ is either the zero measure or a probability measure
 \cite{MozSha}. This fact relies on the quantitative non-divergences estimates for
 unipotents due to works of S.~G.~Dani~\cite{Dani} (further refined by G.~A.~Margulis and D.~Kleinbock~\cite{MarKle}). On the other hand, if $T$ acts on $X=\SL_{d}( \Z) \backslash\SL_d(\R)$ by a diagonal element, then $\mu(X)$ can be any value in the interval $[0,1]$
 due to softness of Anosov-flows, see for instance \cite{Const}. However, as we will see there are constraints on $\mu(X)$ if we have additional
 information about the entropies $h_{\mu_i}(\T)$. This has been observed in \cite{TO} for the action of the geodesic flow on $\SL_2(\Z) \backslash \SL_2(\R)$, see Theorem~\ref{thm:torusorbit}.
 In this paper we will generalize this theorem to the space $\SL_{3}( \Z) \backslash \SL_{3} (\R)$ with the action of a
 particular diagonal element.

We identify $X=\SL_{d}( \Z) \backslash \SL_{d} (\R)$ with the space of unimodular lattices in $\R^{d}$, see $\S$~\ref{sec:UnModLat}.
Using this identification we can define for $d=3$ the height function $\h(x)$ of a lattice $x \in X$ as follows.
\begin{mydef} For any 3-lattice $x \in \SL_3(\Z) \backslash \SL_3(\R) $ we define the {\rm height} $\h(x)$ to be the inverse of the minimum of the length of the shortest nonzero vector in $x$ and the smallest covolume of planes w.r.t. $x$.
\end{mydef} Here, the length of a vector is given in terms of the Euclidean norm on $\R^{d}$.  Also, if $d=2$ then we consider the height $\h(x)$ to be the inverse of the length of the shortest nonzero vector in $x$. Let 
$$X_{\le M}:=\{x \in X \, | \,\h(x)\le M\} \text{  and } X_{\geq M}:=\{x \in X \, | \, \h(x)\geq M\}.$$
 By Mahler's compactness criterion (see Theorem~\ref{thm:mahler})
 $X_{\leq M}$ is compact and any compact subset of $X$ is
 contained in some $X_{\leq M}$.

In \cite{TO}, M.~E., E.~Lindenstrauss, Ph.~Michel, and A.~Venkatesh give the following theorem:
\begin{thm}
\label{thm:torusorbit}
Let $X$ be the homogeneous space $\SL(2,\Z)\backslash\SL(2,\R)$, let $T$ be the time-one-map for the geodesic flow, and $\mu$ be a $T$ invariant probability measure on $X$. Then, there exists $M_0,$ such that 
 $$
  h_\mu(T)\leq 1+\frac{\log\log M}{\log M}-\frac{\mu(X_{\geq M})}{2}
 $$
for any $M\geq M_0$.
 In particular, for a sequence of $T$-invariant probability measures $\mu_i$ with entropies $h_{\mu_i}(T)\geq c$
we have that any weak$^*$ limit $\mu$ has at least $\mu(X)\geq 2c-1$ mass left. 
\end{thm}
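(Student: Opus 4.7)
\emph{Proof proposal.} The strategy is to bound the entropy via a Katok-style count of $(n,\epsilon)$-Bowen balls needed to cover most of the measure, partitioning orbit pieces according to how much time they spend in the cusp $X_{\geq M}$. By affinity of entropy and the ergodic decomposition we may assume $\mu$ is ergodic; set $\alpha:=\mu(X_{\geq M})$, so by the Birkhoff theorem a $\mu$-typical orbit of length $n$ visits $X_{\geq M}$ at roughly $\alpha n$ steps. Katok's formula
$$h_\mu(\T)=\lim_{\epsilon\to 0}\limsup_{n\to\infty}\tfrac{1}{n}\log N_\mu(n,\epsilon),$$
with $N_\mu(n,\epsilon)$ the minimum number of $(n,\epsilon)$-Bowen balls needed to cover a set of $\mu$-measure $\geq 1/2$, then reduces the task to upper bounding $N_\mu(n,\epsilon)$.

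The key geometric input is that every cusp excursion is long. A lattice $x\in X_{\geq M}$ has a unique primitive shortest vector $v$ of length $\leq 1/M$; in order that $\T^i x$ remain in $X_{\geq M}$, the direction of $v$ must lie very close to the stable eigenaxis of $\T$, and then $|\T^i v|\asymp e^{-i/2}|v|$ returns to $1/M$ only after $\gtrsim 2\log M$ steps. Hence a length-$n$ orbit has at most $\lfloor n/(2\log M)\rfloor$ cusp excursions, and the number of possible excursion patterns is bounded by
$$\binom{n}{\lfloor n/(2\log M)\rfloor}\leq\exp\!\Bigl(\tfrac{n\log\log M}{2\log M}(1+o(1))\Bigr),$$
which is where the $\log\log M/\log M$ error term originates.

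With a pattern fixed, we count Bowen balls separately for compact and cusp time. Over the $(1-\alpha)n$ steps in the thick part $X_{\leq M}$ the Haar volume of an $(n,\epsilon)$-Bowen ball is $\asymp\epsilon^{3}e^{-n}$, giving $\lesssim e^{(1-\alpha)n}$ centers. Over the $\alpha n$ cusp steps, the stable-pinning of $v$ forces the Bowen ball intersected with the cusp tube to fit inside an effective transverse width of order $1/M$ in the unstable direction, so only $\lesssim e^{(\alpha n)/2}$ additional centers are needed; equivalently, the unstable-direction entropy rate in the cusp is at most $1/2$ rather than $1$. Combining yields
$$h_\mu(\T)\leq (1-\alpha)+\tfrac{\alpha}{2}+\tfrac{\log\log M}{\log M}=1+\tfrac{\log\log M}{\log M}-\tfrac{\mu(X_{\geq M})}{2}.$$
The ``in particular'' clause then follows by applying this to each $\mu_i$ and using that $X_{\leq M}$ is closed, so any weak$^{*}$ limit satisfies $\mu(X_{\leq M})\geq\limsup_i\mu_i(X_{\leq M})\geq 2c-1-o_M(1)$, and then sending $M\to\infty$ gives $\mu(X)\geq 2c-1$.

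The main technical obstacle is the cusp count: one must quantify that an $(n,\epsilon)$-Bowen ball centered at $y$ whose forward orbit up to time $n$ spends a fraction $\alpha$ in $X_{\geq M}$ has effective unstable thickness comparable to $e^{-n(1-\alpha/2)}$ rather than the $e^{-n}$ one gets in the thick part. This requires working in Siegel-set coordinates, quantifying how close the short vector of a persistent cusp orbit must be to the stable eigenvector of $\T$, translating this near-alignment into a restriction of the unstable transversal to a subset of measure $\sim 1/M$, and packaging the estimate coherently across all excursions in a fixed pattern.
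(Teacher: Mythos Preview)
This theorem is not proved in the present paper; it is quoted from \cite{TO}, and the only comment offered is that the argument there ``makes use of the geometry of the upper half plane $\mathbb H$''. So there is no proof here to compare against directly.

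That said, your outline is essentially the $\SL_2$ specialization of the method the paper develops for its own main result (Theorem~\ref{thm:main} for $\SL_3$): reduce to ergodic $\mu$, bound entropy by the growth rate of Bowen-ball covers (Lemma~\ref{entropy}), control the number of cusp patterns by a separation estimate (the $\SL_2$ analogue of Lemma~\ref{lem:card}), and show that on a cusp interval of length $S$ the unstable perturbation is constrained to a set of diameter $\asymp e^{-S/2}$ (the $\SL_2$ analogue of Lemmas~\ref{lem:rest}--\ref{lem:volv}), so that only $\asymp e^{S/2}$ rather than $e^{S}$ forward Bowen-$S$ balls are needed there. The final arithmetic $(1-\alpha)\cdot 1+\alpha\cdot\tfrac12=1-\alpha/2$ and the deduction of the ``in particular'' clause are correct.

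Two details in your sketch are not right as stated, however. First, it is not true that ``every cusp excursion is long'': a vector can satisfy $|v|\le 1/M$ for a single time step and then leave. What is true, and what actually bounds the number of excursions by $O(n/\log M)$, is that consecutive excursions are \emph{well separated}: if $v$ is $1/M$-short at time $a$ then any linearly independent primitive $w$ satisfies $|T^a w|\ge M$ (since the lattice is unimodular), and it takes $\gtrsim \log M$ steps for $|T^k w|$ to shrink to $1/M$. This is the rank-one version of Lemma~\ref{lem:card}. Second, the phrase ``effective transverse width of order $1/M$'' is not the correct scaling; the constraint coming from a cusp interval of length $S$ is $|t|\lesssim e^{-S/2}$ on the unstable parameter (exactly the $\SL_2$ specialization of Lemma~\ref{lem:uv}), and it is this that yields the $e^{S/2}$ count, not anything involving $M$ directly. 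With these two corrections your argument goes through along the lines of \S\ref{sec:mainprop}--\ref{sec:mainthm}, though it is presumably organized differently from the upper-half-plane proof in \cite{TO}.
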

Here, $\mu$ is a weak$^*$ limit of the sequence $\{\mu_i\}_{i=1}^\infty$ if for some subsequence $i_k$ and for all
$f \in C_c(X)$ we have $$\lim_{k\to \infty}\int_X f d\mu_{i_k} \to \int_X f d \mu.$$

The proof of Theorem \ref{thm:torusorbit} in \cite{TO} makes use
of the geometry of the upper half plane $\mathbb H$.

From now on we let $X=\SL_{3}( \Z) \backslash
\SL_{3} (\R)$ and let
$$\alpha=\left( \begin{array}{ccc} e^{1/2} & &  \\
& e^{1/2}&\\
& & e^{-1} \end{array} \right) \in \SL_{3}(\R).$$
We define the transformation $\T:X \to X$ via $\T(x)=x\alpha$. We now state the main theorem of this paper.
\begin{thm}
\label{thm:main}
Let $X$ and $\T$ be as defined above. Then there exists a function $\varphi(M)$ (which is given explicitly), with $\varphi(M) \to_{M\to \infty} 0,$ and $M_0$ such that for any $\T$-invariant probability measure $\mu$ on $X$, and any $M>M_0$, one has
 $$h_{\mu}(\T)\leq 3-\mu(X_{\geq  M })+\varphi(M).$$
\end{thm}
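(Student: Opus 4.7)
The plan is to adapt the approach of \cite{TO} from $\SL_2$ to the $\SL_3$ setting: construct a countable partition $\mathcal{P}$ of $X$ whose atoms are adapted to the height function $\h$, and control the exponential growth of $H_\mu(\mathcal{P}^{(n)})$, where $\mathcal{P}^{(n)} = \bigvee_{k=0}^{n-1}\T^{-k}\mathcal{P}$, so as to extract the cusp-mass saving term. By the ergodic decomposition I may assume $\mu$ is ergodic, since both sides of the inequality are affine in $\mu$.

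Fix a small $\epsilon>0$. I would take $\mathcal{P}$ to consist of (i) a finite cover of the compact part $X_{\le M}$ by sets of diameter at most $\epsilon$ in a fixed right-invariant Riemannian metric, and (ii) cusp shells $\{x : 2^{j}\le\h(x)<2^{j+1}\}$ for $j\ge\log_2 M$. By the Mahler--Siegel volume estimates the shells have polynomial volume decay in $M$, so that $H_\mu(\mathcal{P})<\infty$, and after a standard refinement $\mathcal{P}$ becomes a generator, giving $h_\mu(\T)=h_\mu(\T,\mathcal{P})$. To each atom $A$ of $\mathcal{P}^{(n)}$ I attach its cusp itinerary $\omega(A)\in\{0,1\}^n$ by setting $\omega_k=1$ iff $A\subseteq\T^{-k}(X_{\ge M})$; by Birkhoff, $\mu$-typical $\omega$ satisfy $|\omega|/n\to\mu(X_{\ge M})$.

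The heart of the proof is a Bowen-ball count per itinerary $\omega$. The unstable horospherical subgroup $U^{-}=\exp(\R E_{31}+\R E_{32})$ is two-dimensional, and each root direction is expanded by $e^{3/2}$ under $\operatorname{Ad}(\alpha)$, giving the baseline of $\sim e^{3n}$ Bowen balls. The crucial saving is that each time $k\in\operatorname{supp}(\omega)$ the orbit must lie in the thin set $X_{\ge M}$, which, via the cusp volume bound $\operatorname{vol}(X_{\ge M})\lesssim M^{-\beta}$ together with an effective Dani--Margulis non-divergence estimate for $U^{-}$, reduces the number of admissible Bowen balls by a multiplicative factor of $e^{-1+\eta(M)}$ per cusp visit, where $\eta(M)\to 0$. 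Consequently $\log\#\{\text{atoms with itinerary }\omega\}\le 3n-|\omega|(1-\eta(M))+n\,\psi(M)$, with $\psi(M)\to 0$ absorbing the combinatorial cost of enumerating typical $\omega$'s. Combining this with $|\omega|/n\to\mu(X_{\ge M})$ and passing to $h_\mu(\T,\mathcal{P})=\lim_n n^{-1}H_\mu(\mathcal{P}^{(n)})$ yields the claimed bound.

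The main obstacle is the cusp-saving estimate: rigorously establishing that each visit to $X_{\ge M}$ reduces the Bowen-ball count by the required factor $e^{-1+\eta(M)}$. In the $\SL_2$ case of \cite{TO} the analogous saving is $1/2$ and follows from the simple one-dimensional cusp geometry, but in $\SL_3$ the cusp has two distinct flavors (short lattice vectors versus small-covolume lattice planes), and the unstable leaf is two-dimensional, so a careful case analysis is needed to match each cusp flavor with each of the two root directions $E_{31}, E_{32}$. Producing the explicit $\varphi(M)\to 0$ requires tracking quantitatively the Mahler volume decay exponent, the non-divergence rate for $U^{-}$, and the entropy cost of specifying $\omega$; controlling the interplay of these terms is where the bulk of the technical work will lie.
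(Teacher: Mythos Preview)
Your outline has the right skeleton (ergodic reduction, itinerary coding, Bowen-ball counting with a saving on cusp visits), and you correctly identify where the difficulty lies. But the mechanism you propose for the cusp saving is not the one that works, and I do not see how to make it work.

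The saving does \emph{not} come from a volume estimate $\operatorname{vol}(X_{\ge M})\lesssim M^{-\beta}$ combined with Dani--Margulis non-divergence. Non-divergence is a statement about \emph{unipotent} orbits; here the dynamics is diagonal, and the relevant unstable leaf $x U^+$ may well spend long stretches in $X_{\ge M}$ (indeed this is exactly what happens for singular lattices). Small cusp volume says nothing about how many forward Bowen $n$-balls meet $\bigcap_{k\in\omega}\T^{-k}X_{\ge M}$, since the cusp is thin in the contracting directions but not in the expanding $U^+$-direction. So neither ingredient yields a per-visit factor $e^{-1+\eta(M)}$.

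What the paper actually does is a direct geometric analysis of the $U^+$-perturbation. If $x$ has a vector $v$ with $|v|\ge 1/M$ that stays $1/M$-short on $[1,S]$, then for any nearby $x g$ whose corresponding vector does the same, the $U^+$-part $(t_1,t_2)$ of $g$ must satisfy $t_1^2+t_2^2\ll e^{-S}$: a two-dimensional constraint. If instead a rational \emph{plane} stays short on $[1,S]$, one gets only a one-dimensional constraint $|a t_1+b t_2|\ll e^{-S}$. The crux is Minkowski's lemma: two independent $1/M$-short vectors force a unique short plane and vice versa, so along a cusp excursion the short object \emph{alternates} between a vector and a plane. The paper encodes this alternation by a set of ``labeled marked times'' $(\mathcal L,\mathcal L',\mathcal P,\mathcal P')$, bounds the number of such codings by $e^{O(N\log\log M/\log M)}$, and then in Proposition~4.1 multiplies out the vector/plane constraints together with a configuration count (how many candidate vectors/planes in $x_0$ could correspond to the short object in a perturbed lattice). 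The telescoping product yields exactly $e^{2|V|}$ admissible Bowen balls over the cusp interval $V$ instead of $e^{3|V|}$, i.e.\ the saving $e^{-|V|}$. Without the Minkowski alternation and the marked-times bookkeeping, one cannot reconcile the fact that a short plane alone gives only half the needed saving with the claimed factor $e^{-1}$ per unit cusp time. Your proposal is missing this structure entirely; the ``case analysis matching cusp flavors with root directions $E_{31},E_{32}$'' you allude to is not the right dichotomy---the two unstable root directions play a symmetric role, and the real dichotomy is vector versus plane.
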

In this context we note that the maximal measure theoretic entropy, the entropy of $\T$ with respect to Haar measure on $X$, is $3$. This follows e.g.\ from \cite[Prop. 9.2]{MarTom}. We will see later that $\varphi(M)=O(\frac{\log \log M}{\log M})$. 

As a consequence of Theorem~\ref{thm:main} we have:
\begin{cor}
\label{cor:lim} A sequence of $\T$-invariant probability measures
$\{\mu_i\}_{i=1}^\infty$ with entropy $h_{\mu_i}(\T) \geq c$ satisfies that any
weak$^*$ limit $\mu$ has at least $\mu(X) \geq c-2$ mass left.
\end{cor}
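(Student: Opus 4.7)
The plan is to deduce the corollary directly from Theorem~\ref{thm:main} by a standard vague-convergence argument, using compactness of $X_{\le M}$ from Mahler's criterion.

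First I would apply Theorem~\ref{thm:main} pointwise: for every $M>M_0$ and every $i$, the hypothesis $h_{\mu_i}(\T)\ge c$ combined with $h_{\mu_i}(\T)\le 3-\mu_i(X_{\ge M})+\varphi(M)$ yields
\[
\mu_i(X_{\ge M})\le 3-c+\varphi(M),
\]
and therefore $\mu_i(X_{\le M})\ge c-2-\varphi(M)$ (using $X=X_{\le M}\cup X_{\ge M}$; any measure assigned to the level set $\{\h=M\}$ only helps this inequality). Thus uniformly in $i$, the mass of $\mu_i$ on the compact piece $X_{\le M}$ is bounded below by $c-2-\varphi(M)$.

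Second, I would transfer this uniform lower bound to the weak$^*$ limit $\mu$ along a convergent subsequence $\mu_{i_k}$. Since $X_{\le M}$ is compact by Mahler's compactness criterion, I can choose by Urysohn a function $f_M\in C_c(X)$ with $0\le f_M\le 1$ and $f_M\equiv 1$ on $X_{\le M}$. Then
\[
\mu(X)\ge \int_X f_M\,d\mu=\lim_{k\to\infty}\int_X f_M\,d\mu_{i_k}\ge \liminf_{k\to\infty}\mu_{i_k}(X_{\le M})\ge c-2-\varphi(M),
\]
where the equality uses the definition of weak$^*$ convergence against $C_c(X)$ given in the introduction.

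Finally, since this bound holds for every $M>M_0$ and $\varphi(M)\to 0$ as $M\to\infty$, I can let $M\to\infty$ to conclude $\mu(X)\ge c-2$. There is no real obstacle here: the subtle point to watch is that $\mu$ need not be a probability measure (this is precisely the escape of mass phenomenon), so one must avoid using the Portmanteau theorem for bounded continuous functions and instead work with $C_c(X)$ functions that dominate $\mathbf{1}_{X_{\le M}}$, which is exactly what compactness of $X_{\le M}$ enables.
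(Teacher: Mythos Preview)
Your proposal is correct and follows essentially the same route as the paper: apply Theorem~\ref{thm:main} to get the uniform lower bound $\mu_i(X_{\le M})\ge c-2-\varphi(M)$, sandwich $\mathbf{1}_{X_{\le M}}$ by a Urysohn bump $f_M\in C_c(X)$ to pass to the weak$^*$ limit, and then let $M\to\infty$. The only cosmetic difference is that the paper bounds $\int f_M\,d\mu$ above by $\mu(X_{<M+1})$ before taking $M\to\infty$, whereas you bound it directly by $\mu(X)$; your version is in fact slightly more streamlined.
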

This result is sharp in the following sense. For any $c \in (2,3)$ one can construct a sequence of probability measures $\mu_i$ with $h_{\mu_i}(\T)\to c $ as $i\to \infty $ such that any weak$^*$ limit $\mu$ has precisely $c-2$ mass left, see \cite{Const}.

Another interesting application of our method arises when we do not assume $\T$-invariance of the measures we consider. In this case, instead of entropy consideration we assume that our measures have high dimension and study the behaviour of the measure under iterates of $T$. 

 Let us consider the following subgroups of $G$
\begin{equation}
\label{eqn:U^+}U^+=\{g\in G:\alpha^{-n}g\alpha^n\to 1 \text{ as } n\to -\infty\},
\end{equation}
\begin{equation}
\label{eqn:U^-}
U^-=\{g\in G:\alpha^{-n}g\alpha^n\to 1 \text{ as } n\to \infty\},
\end{equation}
\begin{equation}
\label{eqn:C}
C=\{g\in G : g \alpha=\alpha g\}.
\end{equation}
For any $\epsilon >0$, group $H$, and $g \in H$ we write $B_{\epsilon}^H(g)$ for the $\epsilon$-ball in $H$ around $g$, see also \S~\ref{inj}. Throughout this paper we write $A \ll B$ if there exits a constant $c>0$ such that $A \le c B$. If the constant $c$ depends on $M$, then we write $A \ll_M B.$
\begin{mydef}For a probability measure $\nu$ on $X$ we say that $\nu$ has \emph{dimension at least $d$ in the unstable direction} if for any $\delta>0$ there exists $\kappa>0$ such that for any $\epsilon\in (0,\kappa)$ and for any $\eta \in (0,\kappa)$ we have
\begin{equation}
\label{eqn:unstable}
\nu(xB_{\epsilon}^{U^+}B_\eta^{U^-C})\ll_\delta \epsilon^{d-\delta} \text{ for any } x\in X.
\end{equation}
\end{mydef}
Note that the maximum value for $d$ in the definition is 2 since $U^+$ is two dimensional. The most interesting case of this definition concerns a measure $\nu$ supported on a compact subset, say $\overline{x_0B_1^{U^+}}$, of an orbit $x_0U^+$ under the unstable subgroup. In this case, \eqref{eqn:unstable} is equivalent to $\nu(x_0 u B_\epsilon^{U^+})\ll \epsilon^{d-\delta}$ for all $u \in U^+$ (which is one of the inequalities of the notion of Ahlfors regularity of dimension $d-\delta$) and for any $\delta>0$. See \cite[Chaps. 4-6]{Mat} for more information on Ahlfors regularity. 

Let us consider the following sequence of measures $\mu_n$ defined by
$$\mu_n=\frac{1}{n}\sum_{i=0}^{n-1}\T^i_*\nu$$
where $\T^i_*\nu$ is the push-forward of $\nu$ under $\T^i$. We have
\begin{thm} 
\label{thm:dim}
For a fixed $d$, let $\nu$ be a probability measure of dimension at least $d$ in the unstable direction, and let $\mu_n$ be as above. Let $\mu$ be a weak$^*$ limit of the sequence $(\mu_n)_{n\ge 1}$. Then $\mu(X) \geq \frac{3}{2}(d-\frac{4}{3})$. In other words, at least $\frac{3}{2}(d-\frac{4}{3})$ of the mass is left.
\end{thm}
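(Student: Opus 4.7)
My plan is to reuse the covering/entropy machinery developed for Theorem~\ref{thm:main}, with the entropy of a $\T$-invariant measure replaced by an ``effective entropy'' $\tfrac{3d}{2}$ extracted from the dimension hypothesis on $\nu$. The exchange rate $d\mapsto\tfrac{3d}{2}$ reflects the fact that $\T$ expands each of the two coordinate directions in $U^+$ by $e^{3/2}$; combined with the heuristic of Corollary~\ref{cor:lim} (mass $\ge c-2$ for entropy $c$), this predicts the bound $\mu(X)\ge\tfrac{3d}{2}-2=\tfrac{3}{2}(d-\tfrac{4}{3})$.

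First, I would bound the $\mu_n$-mass of a Bowen ball. Since $\T$ expands $U^+$ by $e^{3/2}$, contracts $U^-$ by $e^{-3/2}$, and commutes with $C$, the Bowen ball
\begin{equation*}
B^{\T}_N(x,\epsilon):=\bigcap_{i=0}^{N-1}\T^{-i}B(\T^i x,\epsilon)
\end{equation*}
is, up to bounded distortion, contained in the product box $xB^{U^+}_{\epsilon e^{-3(N-1)/2}}B^{U^-C}_{\epsilon}$. Its $\T^{-i}$-image has the form $(\T^{-i}x)B^{U^+}_{\epsilon e^{-3(N+i-1)/2}}B^{U^-C}_{\epsilon e^{3i/2}}$ (the $U^-$-factor dilates while the $C$-factor is preserved), so as long as the transverse radius $\epsilon e^{3i/2}$ stays below the threshold $\kappa$ of \eqref{eqn:unstable} we obtain
\begin{equation*}
\T^i_*\nu\bigl(B^{\T}_N(x,\epsilon)\bigr)=\nu\bigl(\T^{-i}B^{\T}_N(x,\epsilon)\bigr)\ll_\delta\bigl(\epsilon e^{-3(N+i-1)/2}\bigr)^{d-\delta}.
\end{equation*}
The dominant term of the Cesaro average $\mu_n(B^{\T}_N(x,\epsilon))=\tfrac{1}{n}\sum_{i=0}^{n-1}\T^i_*\nu(B^{\T}_N(x,\epsilon))$ is $i=0$, yielding
\begin{equation*}
\mu_n\bigl(B^{\T}_N(x,\epsilon)\bigr)\ll_\delta \epsilon^{d-\delta}e^{-3(N-1)(d-\delta)/2}.
\end{equation*}
This is exactly the decay rate a $\T$-invariant measure of entropy $\tfrac{3d}{2}-O(\delta)$ would enjoy.

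Next I would feed this Bowen-ball bound into the counting argument in the proof of Theorem~\ref{thm:main}, which bounds the escape of mass into $X_{\ge M}$ by comparing the total Bowen-cell cover of $X$ (of order $e^{3N}$, reflecting the full unstable Jacobian of $\T$) with the reduced cover of $X_{\ge M}$ (the reduction being exactly the $-\mu(X_{\ge M})$ term in Theorem~\ref{thm:main}). Substituting $\tfrac{3d}{2}$ for $h_\mu(\T)$ throughout produces a non-invariant analogue
\begin{equation*}
\mu_n(X_{\ge M})\le 3-\tfrac{3d}{2}+\varphi(M)+o_n(1),
\end{equation*}
and taking a weak$^*$ limit together with $M\to\infty$ and $\delta\to 0$ gives the claimed $\mu(X)\ge\tfrac{3}{2}(d-\tfrac{4}{3})$.

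\textbf{Main obstacle.} The principal difficulty is that \eqref{eqn:unstable} controls $\nu$ only for transverse radius below $\kappa$, whereas the $\T^{-i}$-image of the Bowen ball dilates the $U^-$-factor by $e^{3i/2}$, exceeding $\kappa$ as soon as $i>\tfrac{2}{3}\log(\kappa/\epsilon)$. For these larger Cesaro indices one must apply the dimension bound after a pigeonhole cover of the enlarged transverse box by roughly $(\epsilon e^{3i/2}/\kappa)^{\dim U^-}$ reference pieces, and check that the resulting overhead is absorbed by the $e^{-3i(d-\delta)/2}$-decay coming from the $U^+$-direction. Balancing the Bowen length $N$, the Cesaro window $n$ and the scale $\epsilon$ so that these exponents close is the main combinatorial task. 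A secondary but essential point is that $\mu_n$ is only approximately $\T$-invariant, so the counting argument of Theorem~\ref{thm:main} cannot be quoted as a black box but has to be re-run on the Cesaro averages, with $\T$-invariance invoked only at the weak$^*$ limit.
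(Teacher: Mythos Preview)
Your approach has a real gap, and it is precisely the ``main obstacle'' you flag: the pigeonhole fix does not close. When you pull back a forward Bowen $N$-ball by $\T^{-i}$, the $U^-$-radius expands by $e^{3i/2}$ in each of its two coordinates, so covering the enlarged transverse box by $\kappa$-sized pieces costs a factor $\asymp e^{3i}$. The gain from the extra $U^+$-contraction is only $e^{-3i(d-\delta)/2}$, and since $d\le 2$ the net exponent $3-\tfrac{3}{2}(d-\delta)$ is nonnegative. Hence the Cesaro average is \emph{not} dominated by the $i=0$ term; the large-$i$ terms blow up and you obtain no useful bound on $\mu_n(B^{\T}_N)$. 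The secondary issue you raise---no $\T$-invariance, so Lemma~\ref{entropy} is unavailable---compounds the problem.

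The paper's route bypasses both difficulties by never estimating $\T^i_*\nu$ at all. The key identity is
\[
\mu_N(X_{\ge M})=\frac{1}{N}\sum_{n=0}^{N-1}\nu\bigl(\T^{-n}X_{\ge M}\bigr)=\int_{X_{\le M}}\frac{|V_x|}{N}\,d\nu(x)+O\bigl(\nu(X_{>M})\bigr),
\]
where $V_x=\{n\in[0,N-1]:\T^n(x)\in X_{\ge M}\}$. One then splits the integral at $|V_x|=\kappa N$: the contribution from $\{|V_x|\le\kappa N\}$ is at most $\kappa$, while $\nu(\{x\in X_{\le M}:|V_x|>\kappa N\})$ is bounded using the same forward-Bowen cover as in Proposition~\ref{prop:main} (reformulated as Proposition~\ref{prop:main'}), yielding $\ll_{M,\delta}e^{(3-\kappa)N}\cdot e^{-\frac{3}{2}(d-\delta)N}$ up to the usual lower-order factors. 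Here the dimension hypothesis~\eqref{eqn:unstable} is applied once, to $\nu$ itself, with transverse radius fixed at $\eta/2$, so the $\kappa$-threshold never becomes an issue. Choosing any $\kappa>3-\tfrac{3d}{2}$ makes this term vanish as $N\to\infty$, giving $\limsup_N\mu_N(X_{\ge M})\le\kappa+\epsilon(M)$; letting $\kappa\downarrow 3-\tfrac{3d}{2}$ and $M\to\infty$ then yields $\mu(X)\ge\tfrac{3}{2}(d-\tfrac{4}{3})$.
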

In particular, if $d=2$ then the limit $\mu$ is a probability measure. In this case with a minor additional assumption on $\nu$ one in fact obtains the equidistribution result, that is, the limit measure $\mu$ is the Haar measure \cite{Shi}.

Another application of Theorem~\ref{thm:dim} is that it gives the sharp upper bound for the Hausdorff dimension of singular pairs. The exact calculation of Hausdorff dimension of singular pairs was achieved in \cite{Che}. We say that ${\bf r} \in \R^2$ is \emph{singular} if for every $\delta>0$ there exists $N_0>0$ such that for any $N>N_0$ the inequality
$$\|q {\bf r} - {\bf p}\|<\frac{\delta}{N^{1/2}}$$
admits an integer solution for $\bf p\in \Z^2$ and for $q\in \Z$ with $0<q<N$. From our results we obtain the precise upper bound for the Hausdorff dimension of the set of singular pairs; namely this dimension is at most $\frac{4}{3}.$ This gives an independent proof for this fact which was proved in \cite{Che}.
Let $x\in\SL_3(\Z)\backslash\SL_3(\R)$. Then we say $x$ is divergent if $T^n(x)$ diverges in $\SL_3(\Z)\backslash\SL_3(\R)$.
We recall (e.g.\ from \cite{Che}) that ${\bf r}$ is singular if and only if 
$$
 x_{\bf r}=\SL_3(\Z)\begin{pmatrix}1&&\\&1&\\r_1&r_2&1\end{pmatrix}
$$ 
is divergent. An equivalent formulation\footnote{Roughly speaking the additional 6 dimensions corresponding to $U^-C,$ are not as important as the 2 directions in the unstable horospherical subgroup $U^+$. The latter is parametrized by the unipotent matrix as in the definition of $x_{\bf r}$.} of the above Hausdorff dimension result (see \cite{Che}) is that the set of divergent points in  $\SL_3(\Z)\backslash\SL_3(\R)$ has Hausdorff dimension $8-\frac23=\frac43+6$.

However, we can also strengthen this observation as follows.
 A weaker requirement on points (giving rise to a larger set) would be divergence on average, which we define as follows. A point $x$ is {\em divergent on average} (under T) if the sequence of measures
\[
 \frac1N\sum_{n=0}^{N-1}\delta_{T^n(x)}
\]
converges to zero in the weak$^*$ topology, i.e.\ if the mass of the orbit --- but not necessarily the orbit itself --- escapes to infinity. 
\begin{cor}
\label{cor:sing2}
The Hausdorff dimension of the set of points that are divergent on average is also $\frac{4}{3}+6$.
\end{cor}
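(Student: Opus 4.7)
The plan is to prove the upper bound $\dim_H(D)\le\tfrac{4}{3}+6$ by contradiction, using Theorem~\ref{thm:dim} as the main input; the matching lower bound is immediate, since divergent points are a fortiori divergent on average and (as noted just above) the divergent set already has Hausdorff dimension $\tfrac{4}{3}+6$.

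Write $D\subseteq X$ for the set of points divergent on average, and assume $\dim_H(D)>\tfrac{4}{3}+6$. Choose $s\in\bigl(\tfrac{4}{3}+6,\,\dim_H(D)\bigr)$. By Frostman's lemma, applied to a compact Borel subset of $D$ of Hausdorff dimension $>s$, there exists a compactly supported probability measure $\nu$ on $D$ with
\[
 \nu(B_r(x))\;\ll\; r^s \qquad\text{for every }x\in X\text{ and every sufficiently small }r>0,
\]
where $B_r(x)$ denotes the Riemannian $r$-ball in $X$.

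The key step is to convert this isotropic bound into the anisotropic estimate required by the definition of unstable dimension. Since $\dim U^-C=6$, for $\epsilon\le\eta\le\kappa$ one can cover $B_\eta^{U^-C}$ by $O\bigl((\eta/\epsilon)^6\bigr)$ translates $yB_\epsilon^{U^-C}$; each resulting box $B_\epsilon^{U^+}\cdot yB_\epsilon^{U^-C}$ sits inside a Riemannian ball of radius $O(\epsilon)$ in $G$, hence, below the injectivity radius at $x$, inside a Riemannian ball of radius $O(\epsilon)$ in $X$. Summing the Frostman bound over the cover gives
\[
 \nu\bigl(xB_\epsilon^{U^+}B_\eta^{U^-C}\bigr)\;\ll\;(\eta/\epsilon)^6\cdot\epsilon^s\;\le\;\kappa^6\,\epsilon^{\,s-6},
\]
uniformly in $x$; the regime $\epsilon\ge\eta$ is immediate since then the whole box lies in an $O(\epsilon)$-ball. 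Since $s-6>\tfrac{4}{3}$, the measure $\nu$ has dimension at least $d:=s-6$ in the unstable direction.

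Finally, apply Theorem~\ref{thm:dim} to this $\nu$: any weak$^*$ limit $\mu$ of $\mu_n=\tfrac{1}{n}\sum_{i=0}^{n-1}\T^i_*\nu$ satisfies $\mu(X)\ge\tfrac{3}{2}(d-\tfrac{4}{3})>0$. On the other hand, $\nu$ is concentrated on $D$, so for $\nu$-a.e. $x$ and every $f\in C_c(X)$ the Ces\`aro sum $\tfrac{1}{n}\sum_{i=0}^{n-1}f(\T^i x)$ tends to $0$; since it is dominated by $\|f\|_\infty$, Fubini and dominated convergence give $\int f\,d\mu_n=\int\tfrac{1}{n}\sum f(\T^i x)\,d\nu(x)\to 0$. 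Hence $\mu_n\to 0$ in the weak$^*$ topology and the only possible weak$^*$ limit is the zero measure, contradicting $\mu(X)>0$. I expect the conversion step in the previous paragraph to be the main technical point: the loss $(\eta/\epsilon)^6$ must precisely absorb the dimension of $U^-C$ so that the unstable exponent remains strictly above the critical value $\tfrac{4}{3}$.
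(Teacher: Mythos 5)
Your proof is correct, and in the key technical step it takes a genuinely different route from the paper. The paper exploits the fact that divergence on average is (locally) insensitive to the $U^-C$ coordinate: it writes a small piece of the divergent-on-average set as a product $x_0D'B_\eta^{U^-C}$ with $D'\subset U^+$, concludes $\dim_H D'>\tfrac43$, and then applies Falconer's upper-regular-subset theorem (Theorem~\ref{thm:Fal}) to $D'\subset\R^2$ to manufacture a measure supported on a \emph{single unstable leaf} $x_0U^+$ satisfying \eqref{eqn:unstable}. You instead take a Frostman measure of exponent $s>\tfrac43+6$ on the full eight-dimensional set and convert the isotropic bound $\nu(B_r)\ll r^s$ into the anisotropic one by covering $B_\eta^{U^-C}$ with $O\bigl((\eta/\epsilon)^6\bigr)$ balls of radius $\epsilon$, so that the transverse $6$ dimensions are absorbed and the unstable exponent becomes $s-6>\tfrac43$; this is a valid computation (conjugation by elements of the fixed compact set $B_\kappa^{U^-C}$ is uniformly bi-Lipschitz, and one does not even need injectivity for the upper bound, since the projection $G\to X$ is $1$-Lipschitz). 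What each approach buys: the paper's decomposition makes transparent why the six $U^-C$ directions are ``free'' and lands in the most natural case of the definition of unstable dimension (a measure on one $U^+$-orbit, Ahlfors-regular of exponent $\tfrac43+\epsilon$); your argument avoids having to justify the product structure $F=x_0D'B_\eta^{U^-C}$ and the attendant dimension-of-products step, and it proves the slightly more general statement that \emph{any} Borel set of points divergent on average has dimension at most $\tfrac43+6$, with Frostman's lemma playing the role of Falconer's Corollary~4.12. The endgame --- Theorem~\ref{thm:dim} gives $\mu(X)\ge\tfrac32(d-\tfrac43)>0$ while dominated convergence forces $\mu_N\to 0$ --- is identical in both arguments.
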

We finally note that the nondivergence result \cite[Theorem 3.3]{KleLinWei} is related to Theorem~\ref{thm:dim}. In fact, \cite[Theorem 3.3]{KleLinWei} implies that $\mu$ as in Theorem~\ref{thm:dim} is a probability measure if $\nu$ has the additional regularity property; namely if $\nu$ is assumed to be friendly. However, to our knowledge these additional assumptions make it impossible to derive e.g. Corollary~\ref{cor:sing2}.

The next section below has some basic definitions and facts. In
\S~\ref{sec:marked}, we characterize what it means for a
trajectory of a lattice to be above height $M$ in some time
interval. Using this we prove Theorem~\ref{thm:main} in
\S~\ref{sec:mainprop}-\ref{sec:mainthm}. Theorem~\ref{thm:dim} and its corollary are discussed in \S~\ref{sec:locdim}.

{\bf Acknowledgements:}  We would like to thank Jim Tseng for discussions and for pointing out the reference
to \cite{Fal}. We also thank the anonymous referee for his detailed report and his suggestions.
\section{Preliminaries}
\label{sec:prelim}
\subsection{The space of unimodular lattices}
\label{sec:UnModLat}
In this section we will give a brief introduction to the space of
unimodular lattices in $\R^{3}$.
\begin{mydef}
$\Lambda \subset \R^{3}$ is a \emph{lattice}
if it is a discrete subgroup and the quotient
$\R^{3}/\Lambda$ is compact.
\end{mydef}
Note that this is equivalent to saying that
$\Lambda=\langle v_1,v_2,v_{3}\rangle_{\Z}$ where $v_1,v_2,v_{3}$ are
linearly independent vectors over $\R$.
\begin{mydef}
A lattice $\Lambda=\langle v_1,v_2,v_{3}\rangle_{\Z}$ is said to be $\emph{unimodular}$ if it has covolume equal
to $1,$ where the \emph{covolume} is the absolute value of the
determinant of the matrix with row vectors $v_1,v_2,v_{3}$.
\end{mydef}
We identify a point $\SL_{3}(\Z)g \in X$ with the unimodular lattice in
$\R^{3}$ generated by the row vectors of $g \in G$. We leave it as an exercise for the reader to convince himself that this correspondence is well defined and a bijection.

We now state Mahler's compactness criterion which motivates the definition of the height function in the introduction.
\begin{thm}[Mahler's compactness criterion]
\label{thm:mahler}
 A closed subset $K\subset X$ is compact if and
only if there exists $\delta >0$ such that no lattice in $K$ contains a nonzero vector of length less that $\delta$.
\end{thm}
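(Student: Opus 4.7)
The plan is to prove the two implications separately. For the forward direction (compactness implies a uniform lower bound on the shortest vector), I would observe that the function $\lambda_1 : X \to \R_{>0}$ sending a unimodular lattice to the length of its shortest nonzero vector is continuous. Indeed, in a neighborhood of a point $\Lambda_0 = \Z^3 g_0$, one can write nearby lattices as $\Z^3 g$ with $g \in \SL_3(\R)$ close to $g_0$, and the candidate short vectors $n g$ (for fixed $n \in \Z^3$) depend continuously on $g$. On the compact set $K$, the positive continuous function $\lambda_1$ attains its minimum, which furnishes the desired $\delta > 0$.

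For the converse, I would take an arbitrary sequence $(\Lambda_n)$ in $K$ and produce a subsequence converging in $X$; because $K$ is closed, the limit lies automatically in $K$, so sequential compactness follows. Writing $\Lambda_n = \Z^3 g_n$, the plan is to choose the $g_n$ so that they all lie in a \emph{compact} subset of $\SL_3(\R)$, at which point Bolzano–Weierstrass gives a convergent subsequence $g_{n_k} \to g$, and hence $\Lambda_{n_k} \to \Z^3 g$ in $X$. The central tool is Minkowski's second theorem: for a unimodular lattice in $\R^3$ with successive minima $\lambda_1 \le \lambda_2 \le \lambda_3$ one has $\lambda_1 \lambda_2 \lambda_3 \ll 1$, and combined with the hypothesis $\lambda_1 \ge \delta$ this forces $\lambda_3 \le C \delta^{-2}$.

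Next I would exhibit a $\Z$-basis $v_1, v_2, v_3$ of $\Lambda_n$ whose norms are bounded in terms of the successive minima. In dimension three one may take linearly independent vectors realizing the successive minima; these automatically form a basis (a classical low-dimensional fact). Setting $g_n$ to be the matrix with rows $v_1, v_2, v_3$, its entries are bounded by $C \delta^{-2}$, and since $\det g_n = \pm 1$ the entries of $g_n^{-1}$ are controlled by the same quantities by Cramer's rule. Hence the sequence $(g_n)$ lies in a compact subset of $\SL_3(\R)$ depending only on $\delta$, which is exactly what we needed.

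The only delicate step in this plan is the extraction of a basis with norms comparable to the successive minima. In dimension $\le 4$ one can take the successive-minima vectors directly, but in higher dimensions this fails and one must replace them by an LLL-reduced basis (or appeal to Siegel fundamental sets). For the three-dimensional situation at hand, this subtlety is absent, and the argument reduces cleanly to Minkowski's second theorem combined with the compactness of closed bounded subsets of $\SL_3(\R)$.
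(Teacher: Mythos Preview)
Your argument is correct. The forward direction via continuity of the shortest-vector function on a compact set is standard, and for the converse your use of Minkowski's second theorem to bound $\lambda_3 \le C\delta^{-2}$, together with the fact that in dimension $3$ vectors realizing the successive minima already form a $\Z$-basis, cleanly produces representatives $g_n$ in a fixed compact subset of $\SL_3(\R)$. One small remark: you write $\det g_n = \pm 1$, but since we want $g_n \in \SL_3(\R)$ you should flip the sign of one basis vector if necessary; this is harmless.

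As for comparison with the paper: there is nothing to compare. The paper does not prove Theorem~\ref{thm:mahler} at all; it simply cites \cite[Corollary~10.9]{Rag} and moves on. Your proof is therefore more than the paper provides. The route you take (Minkowski's second theorem plus the low-dimensional basis fact) is one of the standard ones; the reference in Raghunathan proceeds via reduction theory and Siegel sets, which has the advantage of working uniformly in all dimensions without the $d \le 4$ caveat you correctly flag.
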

For the proof the reader can refer to \cite[Corollary 10.9]{Rag}. We now deduce Corollary~\ref{cor:lim} from Theorem~\ref{thm:main}.
\begin{proof}
We need to approximate $1_{X_{\le M}}$ by functions of compact
support. So, let $f \in C_c(X)$ be such that
$$
f(x) =
\begin{cases}
1 & \text{for }x\in X_{\leq M} \\
0 & \text{for }x \in X_{\geq (M+1) }
\end{cases}
$$
and $0\leq f(x)\leq 1$ otherwise. Such $f$ exists by Urysohn's
Lemma. Hence, $$\int f \,d\mu_i \geq \int 1_{X_{\le M}}\,
d\mu_i=\mu_i(X_{\le M})\ge c-2-\varphi(M)$$
Let $\mu$ be a weak$^*$ limit, then we have
$$\lim_{i_k \to \infty}\int f \,d\mu_k =\int f\, d\mu$$
and hence we deduce that
$$ \int f \,d\mu \ge c-2-\varphi(M).$$
Now, by definition of $f$ we get $\int f \,d\mu \le \mu(X_{<(M+1)})$. Thus,
$$\mu(X_{<(M+1)})\ge c-2-\varphi(M).$$
This is true for any $M\ge M_0$, so letting $M \to \infty$ finally we have
$$\mu(X)\ge c-2$$
which completes the proof.

\end{proof}
\subsection{Riemannian metric on $ X$}
\label{inj}
Let $G=\SL_{3}(\R)$ and $\Gamma=\SL_{3}(\Z)$. We fix a left-invariant Riemannian metric $d_G$ (or simply $d$) on $G$ and for any $x_1=\Gamma g_1,x_2=\Gamma g_2 \in X$ we define
$$d_X(x_1,x_2)=\inf_{\gamma \in \Gamma} d_{G}(g_1,\gamma g_2)$$
which gives a metric $d_X$ on $X=\Gamma \backslash G.$  For more information about the Riemannian metric, we refer to \cite[Chp. 2]{RM}.

For a given subgroup $H$ of $G$ we let $B_r^H(g):=\{h \in H\,|\,d_G(h,g)<r\}$. It makes sense to abbreviate and write $B_r^H=B_r^H(1)$, where we write 1 for the identity in $G$.
\begin{mydef}
We say that $r>0$ is an injectivity radius of $x \in X$ if the map $g \mapsto xg$ from $B_r^G \to B_r^X(x)$ is an isometry.
\end{mydef}
\begin{lem}
For any $x\in X$ there exists $r>0$ which is an injectivity radius of $x$.
\end{lem}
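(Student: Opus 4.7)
The plan is to exploit discreteness of $\Gamma$ acting by left multiplication on $G$ together with the left-invariance of $d_G$. Write $x = \Gamma g_0$ for some fixed representative $g_0 \in G$. Since $\Gamma$ is a discrete subgroup of $G$ and left multiplication by any $\gamma \in \Gamma \setminus \{1\}$ has no fixed points on $G$, the orbit $\Gamma g_0$ is a discrete subset of $G$ with no accumulation at $g_0$. Hence there exists $\rho > 0$ such that $d_G(\gamma g_0, g_0) \ge \rho$ for every $\gamma \in \Gamma \setminus \{1\}$. I will take $r = \rho/4$ and show that this $r$ is an injectivity radius for $x$.

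First, for any $h_1, h_2 \in B_r^G$, left-invariance gives $d_G(g_0 h_1, g_0 h_2) = d_G(h_1, h_2)$, and from the definition of $d_X$ we immediately get $d_X(xh_1, xh_2) \le d_G(g_0 h_1, g_0 h_2)$. The content of the argument is the reverse inequality, i.e.\ that the infimum defining $d_X$ is attained at $\gamma = 1$. For any $\gamma \in \Gamma \setminus \{1\}$, the triangle inequality followed by left-invariance gives
\[
d_G(g_0 h_1, \gamma g_0 h_2) \ge d_G(g_0, \gamma g_0) - d_G(g_0, g_0 h_1) - d_G(\gamma g_0, \gamma g_0 h_2) \ge \rho - 2r = 2r,
\]
which strictly exceeds $d_G(g_0 h_1, g_0 h_2) < 2r$. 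Hence $d_X(xh_1, xh_2) = d_G(h_1, h_2)$, so the map $h \mapsto xh$ is an isometry onto its image.

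It remains to check that the image is exactly $B_r^X(x)$. Given $y \in B_r^X(x)$, write $y = \Gamma g$ with $d_X(x, y) < r$; by definition of the quotient metric there is some $\gamma \in \Gamma$ with $d_G(g_0, \gamma g) < r$, so $\gamma g = g_0 h$ for some $h \in B_r^G$, giving $y = \Gamma g_0 h = xh$ as required. Conversely, $xh \in B_r^X(x)$ for $h \in B_r^G$ follows from the isometry property just established applied with $h_1 = 1$, $h_2 = h$.

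The lemma is essentially a standard fact about discrete quotients, and the only mildly delicate point is ensuring the infimum over $\Gamma$ in $d_X$ is attained at the identity; this is exactly what the factor $4$ in the choice $r = \rho/4$ buys us, via the triangle inequality. There is no serious obstacle here beyond careful use of left-invariance.
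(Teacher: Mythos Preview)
Your proof is correct. The paper itself does not supply an argument for this lemma; it merely refers the reader to Proposition~9.14 of \cite{EinWar}. Your approach is the standard one: use discreteness of $\Gamma$ to separate $g_0$ from the rest of its orbit by some $\rho>0$, take $r$ a fixed fraction of $\rho$, and then check via left-invariance and the triangle inequality that the infimum defining $d_X$ is realized at $\gamma=1$ on $B_r^G$. The only point worth a remark is that discreteness of $\Gamma g_0$ (and hence the existence of $\rho$) follows because right translation by $g_0$ is a homeomorphism of $G$, even though it need not be an isometry for the left-invariant metric; you implicitly used this and it would do no harm to say it explicitly.
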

Note that since $X_{\le M}$ is compact, we can choose $r>0$ which is an injectivity radius for every point in  $X_{\le M}$. In this case, $r$ is called \textit{an injectivity radius of $X_{\le M}$}. We refer to Proposition 9.14 in \cite{EinWar} for a proof of these claims.
\subsubsection{Operator norms}
\label{sec:metrics}
We endow $\R^3$ with the standard euclidean metric, writing $|u|$ for the norm of $u \in \R^3$. Rescaling the Riemannian metric if necessary we may assume that there exists some $\eta_0>0$ such that $|u-ug|<|u| d_G(1,g)$ for any $u \in \R^3$ and $g \in B_{\eta_0}^G$.
\subsubsection{Metric on $U^+$}
\label{sec:metric on U^+}
We may identify $U^+$ with $\R^2$ using the parametrization 
$$(t_1,t_2)\in \R^2 \to \left(\begin{array}{ccc} 1&&\\&1&\\t_1&t_2&1\end{array}\right).$$
It will be convenient to work with the maximum norm on $\R^2$. We will write $D_{\eta}^{U^+}=\{\left(\begin{array}{ccc} 1&&\\&1&\\t_1&t_2&1\end{array}\right):|t_1|,|t_2|<\eta\}$ for a ball in $U^+$ of radius $\eta$ centred at the identity. Rescaling the maximum norm on $\R^2$ if necessary we will assume that 
$$D_\epsilon^{U^+} \subset B_\epsilon^{U^+}.$$
\subsection{Entropy}
\label{sec:entropy}
Instead of giving here the formal definition of the ergodic theoretic entropy $h_\mu(\T)$ we will state only a well-known and important lemma that will enter our arguments later. We refer to \cite[\S~4]{WB} for a complete definition.

Fix $\eta>0$ small enough so that $B_{\eta}^{\SL_{3}(\R)}$ is an injective image under the exponential map of a neighborhood of $0$ in the Lie algebra.  Define a Bowen
$N$-ball to be the translate $xB_N$ for some $x \in X$ of
$$B_N=\bigcap_{n=-N}^N \alpha^{-n} B_{\eta}^{\SL_{3}(\R)}\alpha^{n}.$$
Roughly speaking the Bowen $N$-ball $x B_N$ consists of all $y $ near $x$ which have the property that the trajectories from time $-N$ to time $N$ of $x$ and $y$ are $\eta$-close to each other.

The following lemma gives an upper bound for entropy in terms of covers of Bowen balls.
\begin{lem}\label{entropy} Let $\mu$ be a $\T$-invariant probability measure on $X$. For any $N\geq
1$ and $\epsilon >0$ let $BC(N,\epsilon)$ be the minimal number of
Bowen $N$-balls needed to cover any subset of $X$ of measure
bigger that $1-\epsilon$. Then
$$h_{\mu}(\T)\leq \lim_{\epsilon \to 0}\liminf_{N \to \infty}\frac{\log
BC(N,\epsilon)}{2N}.$$
\end{lem}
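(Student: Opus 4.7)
The plan is to derive the lemma from the Brin--Katok local entropy formula: for any $\T$-invariant probability measure $\mu$ on $X$ there is a $\T$-invariant $L^1$ function $h^{\mathrm{BK}}_\mu$ with $\int h^{\mathrm{BK}}_\mu\,d\mu = h_\mu(\T)$ such that, for $\mu$-a.e.\ $x$,
$$h^{\mathrm{BK}}_\mu(x) \;=\; \lim_{\eta' \to 0}\,\liminf_{N \to \infty}\, -\tfrac{1}{2N}\log \mu\bigl(x B_N^{(\eta')}\bigr),$$
where $B_N^{(\eta')}$ denotes the Bowen $N$-ball built from the $\eta'$-ball in $G$. Because this integral equals $h_\mu(\T)$, its essential supremum is at least $h_\mu(\T)$, so for any real $h < h_\mu(\T)$ the set $\{h^{\mathrm{BK}}_\mu > h\}$ has positive $\mu$-measure. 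It therefore suffices to show that, for every such $h$ and every sufficiently small $\epsilon > 0$, one has $\liminf_{N \to \infty}\tfrac{\log BC(N,\epsilon)}{2N} \geq h$; the lemma then follows by letting $h \nearrow h_\mu(\T)$.

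The next step uniformizes this on a positive-measure set. Fix $\delta > 0$ with $h + \delta < h_\mu(\T)$. The liminf in the Brin--Katok formula is monotone nondecreasing as $\eta' \searrow 0$, so by pointwise monotone convergence on $\{h^{\mathrm{BK}}_\mu > h\}$, combined with Egorov's theorem applied first along a sequence $\eta'_k \searrow 0$ and then to the resulting $\liminf$ in $N$, one obtains --- after possibly taking the parameter $\eta$ of \S\ref{sec:entropy} smaller (which is implicit in the ``small enough'' condition imposed there) --- a measurable set $E \subseteq X$ with $\mu(E) > 0$ and an integer $N_0$ such that
$$\mu\bigl(x B_N^{(2\eta)}\bigr) \;\leq\; e^{-2N(h - \delta)} \qquad \text{for all } x \in E,\ N \geq N_0.$$
The radius $2\eta$ (rather than $\eta$) is chosen to absorb the triangle inequality for the Bowen pseudometric $d_N(x,y) := \max_{|n| \leq N} d_G(\T^n x, \T^n y)$ used in the next step.

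Finally, fix $0 < \epsilon < \tfrac{1}{2}\mu(E)$ and let $\{x_i B_N\}_{i=1}^M$ with $M = BC(N, \epsilon)$ cover some set of $\mu$-measure at least $1 - \epsilon$. Then the uncovered set has measure at most $\epsilon < \tfrac{1}{2}\mu(E)$, so the subcollection $\mathcal{S} := \{i : x_i B_N \cap E \neq \emptyset\}$ still covers a subset of $E$ of $\mu$-measure at least $\tfrac{1}{2}\mu(E)$. For each $i \in \mathcal{S}$ choose $y_i \in x_i B_N \cap E$; by the triangle inequality in $d_N$ one has $x_i B_N \subseteq y_i B_N^{(2\eta)}$, and hence for $N \geq N_0$
$$\tfrac{1}{2}\mu(E) \;\leq\; \sum_{i \in \mathcal{S}} \mu\bigl(x_i B_N\bigr) \;\leq\; \sum_{i \in \mathcal{S}} \mu\bigl(y_i B_N^{(2\eta)}\bigr) \;\leq\; M \cdot e^{-2N(h - \delta)}.$$
Taking $\tfrac{1}{2N}\log$ and $\liminf_{N \to \infty}$, then letting $\delta \to 0$, $\epsilon \to 0$, and $h \nearrow h_\mu(\T)$, completes the argument.

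The main obstacle is the correct invocation of Brin--Katok in the possibly non-ergodic setting, and in particular securing a single scale $\eta$ and a positive-measure set $E$ on which the Bowen-ball measures decay at rate at least $h$ uniformly in $N$. Once this uniform estimate is in place, the remainder is the short covering-plus-triangle-inequality computation above.
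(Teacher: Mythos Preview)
The paper omits the proof and simply cites \cite[Lemma~5.2]{TO} and \cite{KatBri}; your route through the Brin--Katok local entropy theorem is exactly what those references indicate, and the covering-plus-triangle-inequality computation in your final paragraph is the standard one.

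One point to tighten: you arrange the uniform decay $\mu\bigl(xB_N^{(2\eta)}\bigr)\le e^{-2N(h-\delta)}$ only ``after possibly taking the parameter $\eta$ \ldots\ smaller,'' but $\eta$ is fixed once and for all in \S\ref{sec:entropy}, and the scale needed for the Brin--Katok $\liminf$ to exceed $h$ could a priori depend on $h$ and $\delta$ and shrink as $h\nearrow h_\mu(\T)$. In the present algebraic setting this worry evaporates: the Bowen set $B_N^{(\eta)}=\bigcap_{|n|\le N}\alpha^{-n}B_\eta^G\alpha^n$ is a fixed neighborhood of the identity in $G$, and for any two small scales $\eta,\eta'$ one can cover $B_N^{(\eta)}$ by at most $C(\eta,\eta')$ right-translates of $B_N^{(\eta')}$ with $C$ independent of $N$ (pass to exponential coordinates and use that $\mathrm{Ad}(\alpha)$ acts linearly on the Lie algebra). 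Consequently the Brin--Katok local entropy is already attained at every small fixed scale, so your argument runs for the given $\eta$ without adjustment. The same uniformity of the local picture is what justifies invoking Brin--Katok on the noncompact space $X$ in the first place.
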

We omit the proof which is very similar to \cite[Lemma 5.2]{TO} and goes back to \cite{KatBri}.
\section{Sets of labeled marked times}
\label{sec:marked}
Let $N,M>0$ be given. In this section we define for every $x \in \T^{N}(X_{\le M})$ the set of labeled marked times. Each configuration of such markings will correspond to a particular element of a partition of $X$, and we will estimate the cardinality of this partition (which is desirable due to the link of entropy and the logarithmic growth of covers as in Lemma~\ref{entropy}). This marking has the property that it will tell whether the lattice $\T^n(x)$ is above or below height $M$, without having to know $x$. However, we do not want to consider all vectors (or planes) of $x$ that become short at some point - it is likely that a partitioning of $X$ that uses all such vectors (or planes) will be too large to be of use.

Rather whenever there are two linearly independent primitive $1/M$-short vectors, our
strategy is to consider a plane in $x$ that contains both vectors.
So, for a given lattice $x$ we would like to associate a set of
labeled marked times in $[-N,N]$ which tells us when
a vector or a plane is getting resp. stops being $1/M$-short. Choosing the vectors and planes of $x$ carefully in the following construction we obtain a family $\mathcal{M}_N$ of sets of labeled marked times. This will give rise to a partition of $X$, which will be helpful in the main estimates given in \S~\ref{sec:mainprop}.
\subsection{Short lines and planes}
\label{sec:T-actiononplane}
Let $u,v \in \R^3$ be linearly independent. We recall that the covolume of the two-dimensional lattice $\Z u+\Z v$ in the plane $\R u + \R v$ equals $|u \wedge v|$. Here, $u \wedge v = (u_1,u_3,u_3)\wedge (v_1,v_2,v_3)=(u_2v_3-u_3v_2,u_3v_1-u_1v_3,u_1v_2-u_2v_1)$. Below, $u,v \in \R^3$ will always be such that $\Z u+\Z v= x \cap (\R u + \R v)$ for a lattice $x$. In this case we call $\R u + \R v$ {\it rational} w.r.t. $x$ and will call $|u \wedge v|$ {\it the covolume of the plane} $\R u + \R v$ w.r.t. $x$. We sometimes write {\it a plane $P$ in $x$} to mean the plane $P=\R u+\R v$ rational w.r.t. $x$. 

We also note that the action of $\T$ extends to $\bigwedge^2\R^2$ via
\begin{multline}
\label{eqn:exterior}
\T(u \wedge v)=(u_1e^{1/2},u_2e^{1/2},u_3e^{-1}) \wedge (v_1e^{1/2},v_2e^{1/2},v_3e^{-1})\\
=((u_2v_3-u_3v_2)e^{-1/2},(u_3v_1-u_1v_3)e^{-1/2},(u_1v_2-u_2v_1)e^1).
\end{multline}
For a plane $P=\R u+\R v$ as above, we sometimes write $\T(P)$ for $\T(u \wedge v)$. For a vector $v=(v_1,v_2,v_3) \in \R^3$ we let $\T(v):=v \alpha=(v_1e^{1/2},v_2e^{1/2},v_3 e^{-1}).$

Let $\epsilon > 0$ be given. Fix $x \in X$, a vector $v$ in $x$ is {\it $\epsilon$-short at time $n$} if $|\T^n(v)| \le \epsilon$. Similarly for plane $P \subset \R^3$ we say that it is {\it $\epsilon$-short at time $n$} (w.r.t. $x$) if $\T^n(P)$ is rational w.r.t. $\T^n(x)$ and its covolume is $\le \epsilon$.
\subsection{(Labeled) Marked Times}
\label{sec:lebMar}
For a positive number $N$ and a lattice $x \in \T^N(X_{\le M})$ we explain which times will be marked in $[-N,N]$ and how they are labeled. The following lemma which is special to $\SL_3(\Z) \backslash \SL_3(\R)$ is crucial.
\begin{lem}[Minkowski]
\label{lem:minkowski}
Let $\epsilon_1,\epsilon_2 \in(0,1)$ be given. If there are two linearly independent $\epsilon_1$-short and $\epsilon_2$-short vectors in a unimodular lattice in $x$, then there is a unique rational plane in $x$ with covolume less than $1$ which in fact is $\epsilon_1 \epsilon_2$-short.

If there are two different rational planes of covolumes $\epsilon_1$ and $\epsilon_2$ in a unimodular lattice $x$, then there is a unique primitive vector of length less than $1$ which in fact is $\epsilon_1 \epsilon_2$-short. In this case, the unique $\epsilon_1 \epsilon_2$-short vector lies in the intersection of the two short planes.
\end{lem}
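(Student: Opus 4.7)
The plan is to treat the two statements together by invoking the standard duality between rational planes in a unimodular $x\subset\R^3$ and primitive vectors in the dual lattice $x^*$: a rational plane $P\subset\R^3$ of covolume $c$ corresponds bijectively to a primitive vector $v^*\in x^*$ of length exactly $c$, namely the generator of $x^*\cap(\operatorname{span}P)^\perp$, and distinct planes give non-proportional dual vectors. Combined with Mahler's transference inequality $\lambda_2(L)\lambda_2(L^*)\geq 1$ for any unimodular $L\subset\R^3$ (where $\lambda_k$ denotes the $k$-th successive minimum), this should settle both halves of the lemma symmetrically.

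First, existence in the first statement is immediate: take $P:=\R u+\R v$; since $\Z u+\Z v\subseteq x\cap P$ we obtain $\operatorname{covol}(P)\leq |u\wedge v|\leq |u|\,|v|\leq \epsilon_1\epsilon_2$. Applying the same observation to the dual lattice yields existence in the second statement: two distinct short planes $P_1,P_2$ correspond to linearly independent primitive vectors $v_1^*,v_2^*\in x^*$ of lengths $\epsilon_1,\epsilon_2$, so the $x^*$-plane $\R v_1^*+\R v_2^*$ has covolume at most $\epsilon_1\epsilon_2$, and dualising back produces a primitive $w\in x$ of length at most $\epsilon_1\epsilon_2$ lying in $\operatorname{span}(v_1^*,v_2^*)^\perp=\operatorname{span}(P_1)\cap\operatorname{span}(P_2)=P_1\cap P_2$, as claimed.

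Next, for uniqueness, the hypothesis of the first statement forces $\lambda_2(x)\leq\max(\epsilon_1,\epsilon_2)<1$, so Mahler's inequality yields $\lambda_2(x^*)>1$; hence any two primitive vectors in $x^*$ of length less than $1$ must be proportional, so equal up to sign, and via the duality this gives at most one rational plane of covolume less than $1$ in $x$. The uniqueness claim in the second statement is the mirror argument with $x$ and $x^*$ swapped: two distinct short planes make $\lambda_2(x^*)<1$, whence $\lambda_2(x)>1$, so at most one primitive vector in $x$ has length less than $1$.

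The main substantive ingredient is the transference inequality $\lambda_2(x)\lambda_2(x^*)\geq 1$, a classical fact of the geometry of numbers established by noting that among vectors realising the first two successive minima of $x$ and of $x^*$ some pair must have nonzero integer inner product, hence of absolute value at least $1$; the remaining work is bookkeeping, mostly verifying the annihilator identity $\operatorname{span}(v_1^*,v_2^*)^\perp=\operatorname{span}(P_1)\cap\operatorname{span}(P_2)$ and that Mahler duality preserves covolumes of primitive sublattices. An alternative, more hands-on route to the second statement's existence would be to pick a primitive $w\in x\cap P_1\cap P_2$, complete to bases $\{w,w_i\}$ of $x\cap P_i$, and compute $|\det(w,w_1,w_2)|=\operatorname{covol}(P_1)\operatorname{covol}(P_2)\sin\theta/|w|\ge 1$ in coordinates adapted to the line $P_1\cap P_2$, giving $|w|\le\epsilon_1\epsilon_2$ directly.
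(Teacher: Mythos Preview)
Your argument is correct and is a faithful fleshing-out of the paper's two-sentence sketch (``follows quickly from unimodularity'' for the first half, ``by considering the dual lattice'' for the second). The duality between rational planes in $x$ and primitive vectors in $x^*$, together with $\lambda_2(x)\lambda_2(x^*)\geq 1$, is exactly the right mechanism, and your justification of the transference inequality via a nonzero integer pairing is the standard elementary proof.

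One remark: uniqueness in the first statement can be obtained with slightly less machinery, and this is perhaps closer to what the authors had in mind by ``follows quickly from unimodularity.'' If $Q$ is any rational plane with covolume $c<1$ and $q^*\in x^*$ is its primitive normal (so $|q^*|=c$), then every $v\in x\setminus Q$ satisfies $\langle v,q^*\rangle\in\Z\setminus\{0\}$, whence $|v|\geq 1/c>1$; thus both given short vectors must already lie in $Q$, forcing $Q=\R u+\R v$. The dual of this observation handles uniqueness in the second statement. This avoids invoking $\lambda_2(x)\lambda_2(x^*)\geq 1$ as a black box, though of course your pairing argument proves that inequality anyway, so the difference is cosmetic.
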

The first part of the lemma follows quickly from the assumption that $x$ is unimodular. The second follows by considering the dual lattice to $x$.
We will use these facts to mark and label certain times in an efficient manner so as to keep the total number of configurations as low as possible.
\subsubsection{Some observations}
\label{sec:observations}
Let us explain how we will use Lemma~\ref{lem:minkowski}. Assume that we have the following situation:
There are two linearly independent primitive vectors $u,v$ in a unimodular lattice such that
$$|u| \le 1/M \text{ and } |\T(v)| \le 1/M.$$
Let $u=(u_1,u_2,u_3)$. It is easy to see that
$$|\T(u)|=|(e^{1/2}u_1,e^{1/2}u_2,e^{-1}u_3)| \le \frac{e^{1/2}}{ M}.$$
     Assume $M\ge e^{1/2}$. From Lemma~\ref{lem:minkowski} we have that the plane containing both $\T(u),\T(v)$ has covolume at most $\frac{e^{1/2}}{M^2}\le \frac{1}{M}$, and it is unique with this property.

The similar situation arises when we have two different planes $P,P'$ which are rational for a unimodular lattice such that
$$|P| \le 1/M \text{ and } |\T(P')| \le 1/M$$
where $| \cdot |$ means the covolume. Assume $M \ge e$. One can see that $|\T(P)| \le \frac{e}{M}.$ Thus, we conclude from Lemma~\ref{lem:minkowski} that there is a unique vector of length at most $\frac{e}{M^2}\le \frac{1}{M}$ contained in both planes $\T(P)$ and $\T(P')$.
\subsubsection{Marked times}
\label{sec:markedtimes}
Let $V_{N,x}=\{i \in [-N,N]: \T^i(x) \not \in X_{\le M}\}$. $V_{N,x}$ is a disjoint union of maximal intervals and let $V=[a,b]$ be one them. 
\begin{enumerate}
 \item[(a)] either $a=-N$ (and so $\h(T^a(x))\leq M$) or $a>-N$ and $\h(\T^{a-1}(x)) < M$, 
 \item[(b)] either $b=N$ or $\h(T^{b+1}(x))<M$, and 
 \item[(c)] $\h(\T^n(x)) \ge M$ for all $n \in V$.
 \end{enumerate}
We first show how one should inductively pick the marked times for this interval $V$:

We will successively choose vectors and planes in $x$ and mark the time instances with particular labels when these vectors and planes get $1/M$-short on $V$ and when they become big again. At time $a$ we know that there is either a unique plane or a unique vector getting $1/M$-short. Here, uniqueness of either follows from Lemma~\ref{lem:minkowski}. Moreover, we cannot have two $1/M$-short vectors ($1/M$-short planes) as otherwise there is a $1/M^2$-short plane (or vector) which contradicts the assumption that $V=[a,b]$ has $a$ as a left endpoint. If we have both a unique $1/M$-short plane and vector then we consider whichever stays $1/M$-short longer (say with preference to vectors if again this gives no decision).  Assume that we have a unique plane. The case where we start with a unique vector is similar. Mark $a$ by $p_1$ which is the time when the plane is getting $1/M$-short, and also mark by $p_1'$ the last time in $[a,b]$ when the same plane is still $1/M$-short. If $p_1'=b$ we stop marking. If not, then there is again by Lemma~\ref{lem:minkowski} a unique $1/M$-short plane or vector at $p_1'+1$. If it is a $1/M$-short plane then at time $p_1'+1$ we must have a unique $1/M$-short vector by the discussions in $\S$~\ref{sec:observations}. In either case, we have a unique $1/M$-short vector at time $p_1'+1$. Let us mark by $l_1$ the instance in $[a,p_1'+1]$ when this vector is getting $1/M$-short. Also, mark by $l_1',$ the last time in $[p_1'+1,b]$ for which this vector is still $1/M$-short. If $l_1'=b$ we stop, otherwise at time $l_1'+1$ there must be a unique $1/M$-short plane or vector. If it is a short vector then we know that there must be a unique plane of covolume at most $1/M$ by the discussions in $\S$~\ref{sec:observations}. So, in either case there is a unique $1/M$-short plane at time $l_1'+1$. So, there is an instance in $[a,l_1'+1]$ which we mark by $p_2$ when for the first time this plane is $1/M$-short. Also, mark by $p_2',$ the last instance of time in $[l_1'+1,b],$ for which the plane is $1/M$-short. If $p_2'=b$ we stop here, otherwise we repeat the arguments above and keep marking the time instances in $V$ by $l_i,l_i',p_j,p_j'$ until we hit time $b$.

Given a positive number $N$ and a lattice $x \in \T^N(X_{\le M})$ we first consider the disjoint intervals $V_i$ of maximum length with the property as $V$ above. Now start labeling some elements of the sets $V_i$ as explained earlier starting with $V_1$ and continuing with $V_2$ etc. always increasing the indices of $l_i,l_i',p_i,p_i'$. 

For any lattice $x$  as above we construct in this way a set of labeled marked times in $[-N,N]$. We denote this set by
$$\mathcal{N}(x)=\mathcal{N}_{[-N,N]}(x)=(\mathcal{L,L',P,P'}).$$
Here $\mathcal{L}=\mathcal{L}(x),\mathcal{L'}=\mathcal{L'}(x),\mathcal{P}=\mathcal{P}(x),\mathcal{P'}=\mathcal{P'}(x)$ are subsets in $[-N,N]$ that contain all the labeled marked times $l_i,l_i',p_j,p_j'$ for $x$ respectively. Finally, we let
$$\mathcal{M}_N=\{\mathcal{N}(x)\,:\, x \in \T^N(X_{\le M})\}$$
be the family of all sets of labeled marked times on the interval $[-N,N]$.
\subsubsection{The Estimates}
\begin{lem}[Noninclusion of marked intervals]
\label{lem:properties}
Let $(\mathcal{L,L',P,P'})\in \mathcal{M}_N$ be given. For any $q$ in $\mathcal{L}$ or in $\mathcal{P}$ there is no $r$ in $\mathcal{L}$ or in $\mathcal{P}$ with $q \le r \le r' \le q'$.
\end{lem}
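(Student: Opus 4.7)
The plan is a proof by contradiction that combines strict monotonicity of the primed marked times with Lemma~\ref{lem:minkowski} to force any nested marked interval to contradict the definition of one of its endpoints as a first short time.

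The first step is to establish that within any maximal component $V=[a,b]$ of $V_{N,x}$ the primed endpoints occur in strictly increasing chronological order $p_1' < l_1' < p_2' < l_2' < \cdots$; this is immediate from the inductive construction, since at each stage the new primed time is the last $1/M$-short time of an object that is already short at (the previous primed time)$\,+1$. Marked intervals from distinct components of $V_{N,x}$ are automatically disjoint, so any distinct pair $[q,q']$ and $[r,r']$ violating the lemma must lie in the same $V$, and the monotonicity forces $r' < q'$: the interval $[r,r']$ is produced earlier in the construction. I would treat $[q,q']=[p_i,p_i']$; the case $[q,q']=[l_i,l_i']$ is handled analogously by swapping the roles of planes and vectors (and of the two halves of Lemma~\ref{lem:minkowski} and of \S\ref{sec:observations}). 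The principal subcase is then $[r,r'] = [l_j,l_j']$ with $j<i$.

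For this subcase I would use two consequences in a unimodular $3$-lattice: (i) if a primitive vector has length $\le 1/M$ and a rational plane has covolume $\le 1/M$, the vector must lie in the plane, since otherwise completing the vector to a $\Z$-basis of $x$ through a $\Z$-basis of the plane's sublattice would give determinant $\le 1/M^2 < 1$, contradicting unimodularity; and (ii) at any time at which two distinct rational planes are both $1/M$-short, Lemma~\ref{lem:minkowski} gives a unique primitive short vector in their intersection of length $\le 1/M^2$. The assumption $p_i \le l_j \le p_j'+1$ together with $l_j'\le p_i'$ places $p_j'+1$ inside $[p_i,p_i']$, so $P_i$ is $1/M$-short at $p_j'+1$; combined with $v_j$ being short there, (i) yields $v_j\in P_i$, and the observation of \S\ref{sec:observations} applied to $\T(P_j)$ and $P_i$ at $p_j'+1$ (together with the uniqueness in Lemma~\ref{lem:minkowski}) yields $v_j\in P_j$ as well. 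Since $P_i\ne P_j$, the vector $v_j$ generates up to sign the rank-one sublattice $x\cap(P_i\cap P_j)$. Evaluating (ii) at time $l_j-1$, in the generic subcase $l_j-1\ge\max\{p_i,p_j\}$, produces the unique primitive short vector in $P_i\cap P_j$, which must equal $\pm v_j$ by uniqueness in the rank-one sublattice; hence $|\T^{l_j-1}(v_j)|\le 1/M^2\le 1/M$, contradicting the minimality of $l_j$ as the first short time of $v_j$ in $[a,p_j'+1]$.

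The boundary subcases $l_j\in\{p_i,p_j,a\}$ are handled by performing the Minkowski estimate at $l_j$ itself, yielding $|\T^{l_j}(v_j)|\le 1/M^2$ and then $|\T^{l_j-1}(v_j)|\le e/M^2\le 1/M$ for $M\ge e$, again contradicting minimality; the case $l_j=a$ is ruled out because nothing is $1/M$-short at $a-1$. The remaining subcase $[r,r']=[p_j,p_j']$ with $j<i$ is dispatched more directly: from $[p_j,p_j']\subseteq[p_i,p_i']$ and $p_j\le l_{j-1}'+1\le p_j'$ (or $p_j=a$ for $j=1$) one concludes that both $P_i$ and $P_j$ are $1/M$-short at $l_{j-1}'+1$ (resp.\ at $a$), contradicting the uniqueness of the short plane at that time which was invoked when $P_j$ was introduced in the construction. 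I expect the main obstacle to be the careful enumeration of these boundary subcases and the verification that the Minkowski-derived short vector is always forced to equal the tracked $v_j$, an identification that rests throughout on the uniqueness clauses in Lemma~\ref{lem:minkowski}.
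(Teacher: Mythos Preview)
Your overall architecture matches the paper's: both argue by contradiction, use that primed times are strictly increasing so the contained interval is the earlier one, and split into same-type and mixed-type cases. The same-type (plane-in-plane) argument you give is essentially the paper's.

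There is, however, a genuine gap in your mixed case. In your notation ($[q,q']=[p_i,p_i']$ containing $[l_j,l_j']$, with $[p_j,p_j']$ the plane interval immediately preceding $[l_j,l_j']$), your ``generic subcase'' requires $l_j-1\ge p_j$, and your list of boundary subcases $l_j\in\{p_i,p_j,a\}$ omits the possibility $l_j<p_j$. This can occur: from the construction $l_j$ is the first short time of $v_j$ in $[a,p_j'+1]$, with no a~priori relation to $p_j$. When $l_j<p_j$, your boundary remedy ``perform the Minkowski estimate at $l_j$ itself'' fails, because $P_j$ is not $1/M$-short at $l_j$ and you only have one short plane ($P_i$) there; neither (i) nor (ii) yields $|\T^{l_j}(v_j)|\le 1/M^2$.

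The paper avoids this by reversing the order of your two subcases and then \emph{reducing} the awkward situation to the same-type case: if (in your notation) $l_j\le p_j$, then $p_i\le l_j\le p_j$ and $p_j'<l_j'<p_i'$, so $[p_j,p_j']\subset[p_i,p_i']$, which is already excluded. In the remaining situation $p_j<l_j$, the paper applies Minkowski at time $l_j$ (not $l_j-1$): both $P_i$ and $P_j$ are $1/M$-short at $l_j$, and since $v_j$ is the unique primitive vector of length $<1$ at that time, it must be the $1/M^2$-short vector produced by Lemma~\ref{lem:minkowski}; then $|\T^{l_j-1}(v_j)|\le e/M^2\le 1/M$, contradicting the definition of $l_j$. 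This also renders your detour through $p_j'+1$ to establish $v_j\in P_i\cap P_j$ unnecessary: working at $l_j$ rather than $l_j-1$ makes the identification of the Minkowski vector with $v_j$ automatic.
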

\begin{proof}
We have four cases to consider. Let us start with the case that $r=p_i, r'=p_i'$ and $q=p_j, q'=p_j'$ (where $j>i$ as it is in our construction only possible for a later marked interval $[q, q']$ to contain an earlier one). However, by construction the plane $P_i$ that is $1/M$-short at that time we introduce the marked interval $[p_i,p_i']$ (which is either the beginning of the interval $V$ or is the time the earlier short vector stops to be short) is the unique short plane at that time. Hence, it is impossible to have the stated inclusion as the plane $P_j$ (responsible for $[p_j,p_j']$) would otherwise also be short at that time. The case of two lines is completely similar.

Consider now the case $q=p_j \in \mathcal P$ and $r=l_i \in \mathcal L$ with $p_j \le l_i \le l_i' \le p_j'$. If $l_i=a$ (and so also $l_i=p_j=a$) is the left end point of interval $V=[a,b]$ in the construction, then we would have marked either $l_i,l_i'$ or $p_j,p_j'$ but not both as we agreed to start by marking the end points of the longer interval (if there is a choice). Hence, we may assume $l_i >a$ and that times $l_i, l_i'$ have been introduced after consideration of a plane with marked times $p_k, p_k'$ satisfying $l_i \le p_k'+1 \le l_i'$, in particular $j \not = k$. We now treat two cases depending on whether $p_k \ge l_i$ or not. If $p_k \ge l_i$ then $p_j \le p_k\le p_k' \le p_j'$ which is impossible by the first case. So, assume $p_k < l_i$ then we have two different planes that are $1/M$-short at time $l_i$. This implies that the vector responsible for the interval $[l_i,l_i']$ is $1/M^2$-short by Lemma~\ref{lem:minkowski}. However, this shows that the same vector is also $1/M$-short at time $l_i-1$ for $M \ge e$, which contradicts the choice of $l_i$. The case of $q=l_i \in \mathcal L$ and $r=p_j \in \mathcal P$ is similar.
\end{proof}
We would like to know that the cardinality of $\mathcal{M}_N$ can be made small (important in Lemma~\ref{entropy}) with
$M$ large. In other words, for $M$ large we would like to say that
$\lim_{N \to \infty}\frac{\log \#\mathcal{M}_N}{2N}$
can be made close to zero. The proof is based on the geometric facts in Lemma~\ref{lem:minkowski}.

Let $\mathcal{N}=(\mathcal{L,L',P,P'}) \in \mathcal{M}_N$ and let $\mathcal{L}=\{l_1,l_2,...,l_m\}$ and $\mathcal{P}=\{p_1,p_2,...,p_n\}$ be as in the construction of marked times. It is clear from the construction that $l_i'<l_{i+1}'$ for $l_i',l_{i+1}' \in \mathcal L'$. Thus from Lemma~\ref{lem:properties} we conclude that $l_i \le l_{i+1}$. Hence we have $\mathcal{L}=\{l_1\le l_2\le ...\le l_m\}$. Similarly, we must have $\mathcal{P}=\{p_1\le p_2 \le...\le p_n\}$. In fact, we have the following.
\begin{lem}[Separation of intervals]
\label{lem:card}
For any $i=1,2,...,m-1$ and for any $j=1,2,...,n-1$ we have
$$l_{i+1}-l_i > \lfloor \log M\rfloor \text{ and } p_{j+1}-p_j > \lfloor \log M\rfloor .$$
Also,
$$l_{i+1}'-l_i' > \lfloor \log M\rfloor \text{ and } p_{j+1}'-p_j' > \lfloor \log M\rfloor .$$
\end{lem}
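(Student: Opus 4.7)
The plan is to prove each of the four separations by contradiction, in the spirit of the proof of Lemma~\ref{lem:properties}. The four statements reduce to a single one by two symmetries of the problem: the duality between short vectors of $x$ and short-covolume planes of $x$ (via the dual lattice, which interchanges $\T$ with $\T^{-1}$), and the exchange between ``first short time'' and ``last short time'' (obtained by running the same argument forward in time instead of backward). I illustrate the plan on $l_{i+1} - l_i > \lfloor \log M \rfloor$.

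Suppose for contradiction that $k := l_{i+1} - l_i \le \lfloor \log M \rfloor$. The bound $|\T v| \le e^{1/2}|v|$ on $\R^3$ gives
$$|u_i|_{l_{i+1}} \le e^{k/2}|u_i|_{l_i} \le e^{k/2}/M,$$
which is $<1$ for $k \le \log M$. Lemma~\ref{lem:minkowski} (first part), applied to the linearly independent vectors $u_i, u_{i+1}$ at time $l_{i+1}$, then produces the rational plane $P := \R u_i + \R u_{i+1}$ of covolume
$$|P|_{l_{i+1}} \le e^{k/2}/M^2,$$
and crucially $P$ is the \emph{unique} rational plane of covolume $<1$ at time $l_{i+1}$. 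The analogous bound $|\T^{-1} P| \le e^{1/2}|P|$ on bivectors then yields $|P|_{l_i} \le e^{k}/M^2 \le 1/M$, so $P$ is $1/M$-short at $l_i$ as well.

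To close the argument, I compare $P$ with the marked planes of the construction in \S~\ref{sec:markedtimes}. Since $P$ is $1/M$-short at $l_i$ and in fact has covolume $<1$ throughout a neighborhood of $l_{i+1}$, the uniqueness clause in Lemma~\ref{lem:minkowski} together with an inspection of how the construction introduces planes at the transition time $l_i'+1$ allows one to identify $P$ with the marked plane $P_{i+1}$. Once $P = P_{i+1}$ is established, the fact that $P_{i+1}$ is $1/M$-short at $l_i$ forces $p_{i+1} \le l_i$; combined with $p_{i+1}'\ge l_i'+1 > l_i'$ built into the construction, this produces the nested configuration
$$p_{i+1} \le l_i \le l_i' \le p_{i+1}',$$
which is forbidden by Lemma~\ref{lem:properties}.

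The main obstacle I expect is the identification $P = P_{i+1}$: it requires checking that $P$ and $P_{i+1}$ both have covolume $<1$ at a common time where the Minkowski uniqueness pins down the marked plane, which in turn calls for a short case analysis on whether the ``new short object'' at $l_i'+1$ is itself a plane or a vector. Once this identification is in hand the contradiction is the one-step trick of Lemma~\ref{lem:properties}, and the remaining three separation statements follow from it by the duality and time-reversal symmetries mentioned above.
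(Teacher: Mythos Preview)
Your approach is essentially the paper's: show that the plane $P$ spanned by $u_i,u_{i+1}$ is $1/M$-short at both $l_i$ and $l_{i+1}$, identify $P$ with the marked plane introduced immediately after $[l_i,l_i']$, and contradict Lemma~\ref{lem:properties}. Your estimates for $|P|_{l_i}$ and $|P|_{l_{i+1}}$ are correct (and a bit slicker than the paper's detour through ``time to reach size $1$'' bounds), and the symmetry reduction of the four statements to one is exactly what the paper does.

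The one genuine gap is in the identification step. The case split you anticipate --- on whether the ``new short object'' at $l_i'+1$ is a plane or a vector --- is not the relevant one: by the construction in \S\ref{sec:markedtimes}, after a vector interval the next marking is \emph{always} a plane. What you actually need is that $P$ itself is $1/M$-short at the transition time $l_i'+1$, so that Minkowski uniqueness forces it to be the marked plane. When $l_i'+1\in[l_{i+1},l_{i+1}']$ this is immediate, since both $u_i$ (still $e^{1/2}/M$-short) and $u_{i+1}$ ($1/M$-short) are small at $l_i'+1$. But when $l_i'+1<l_{i+1}$ neither vector is individually short enough there, and your bounds on $|P|_{l_i}$ and $|P|_{l_{i+1}}$ alone do not control $|P|_{l_i'+1}$. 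The paper closes this with one extra fact you do not mention: the squared covolume $n\mapsto|\T^n(P)|^2$ has the form $a_1e^{-n}+a_2e^{2n}$ and is therefore convex in $n$, so its $1/M$-sublevel set is an interval; hence $P$ is $1/M$-short on all of $[l_i,l_{i+1}]\ni l_i'+1$. This convexity also guarantees that $l_i,l_{i+1}$ lie in the same maximal interval $V$ (so that a ``next marked plane'' after $l_i'$ actually exists), a point you use implicitly. With that single ingredient added, your argument is complete and coincides with the paper's.
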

\begin{proof}
 For $1/M$-short vectors in $\R^{3}$, considering their forward trajectories under the action of the diagonal flow $(e^{t/2},e^{t/2},e^{-t})$, we would like to know the minimum possible amount of time needed for the vector to reach size $\ge 1$. Let $v=(v_1,v_2,v_{3})$ be a vector of size $\le 1/M$ which is of size $\ge 1$ at time $t \ge 0$. We have
$$1 \le v_1^2 e^t+v_2^2 e^t+v_3^2 e^{-2t} \le (v_1^2+v_2^2+v_3^2)e^t \le \frac{e^t}{M^2}.$$
So, we have
$$t \ge \log M^2.$$
Hence, it takes more than $2\lfloor \log M\rfloor$ steps for the vector to reach size $\ge 1$. Similarly, for a vector $v=(v_1,v_2,v_{3})$ of size $\ge 1$, we calculate a lower bound for the time $t \ge 0$ when its trajectory reaches size $\le 1/M$. We have
$$\frac{1}{M^2} \ge v_1^2 e^t+v_2^2 e^t+v_3^2 e^{-2t} \ge (v_1^2+v_2^2+v_3^2)e^{-2t} \ge e^{-2t}.$$
So, we must have $t \ge \log M$ and hence it takes at least $t=\lfloor\log M\rfloor$ steps for the vector to have size $\le 1/M$.

Now, assume that $l_{i+1}-l_i \le \lfloor \log M\rfloor.$ Let $u,v$ be the vectors in $x$ that are responsible for $l_i, l_{i+1}$ respectively. That is, $u, v$ are $1/M$-short at times $l_i,l_{i+1}$ respectively but not before. Then the above arguments imply that 
\begin{equation*}
|\T^{l_i}(v)|\le 1 \text{ and } |\T^{l_{i+1}}(u)|\le 1
\end{equation*}
 so the plane $P$ containing both $u$ and $v$ is $1/M$-short at times $l_i$ and $l_{i+1}$.
 
 The covolume of $\T^n(P)$ w.r.t. $\T^n(x)$ is $\sqrt{a_1e^{n}+a_2 e^{-n/2} }$ for some nonnegative $a_1$ and $a_2$. In particular, it is a concave function of $n$ and hence the plane $P$ is $1/M$-short in $[l_i, l_{i+1}]$ (and so $l_i, l_{i+1}$ are constructed using the same $V$). From our construction  we know that $l_i' < l_{i+1}'$. By Lemma~\ref{lem:minkowski} the same plane $P$ is $1/M^2$-short on $[l_i,l_i'] \cap [l_{i+1}, l_{i+1}']$. If this intersection is non-empty, then $P$ is also $e/M^2$-short at time $l_i'+1$. As $M \ge e$ this shows that it is the unique plane that is used to mark points, say $p_k,p_k'$, after marking $l_i,l_i'$. If on the other hand $l_i' < l_{i+1}$, then we already know that $P$ is also $1/M$-short at time $l_i'+1 \in [l_i,l_{i+1}]$ and get the same conclusion as before. Therefore, $p_k \le l_i \le l_i' \le p_k'$ which is a contradiction to Lemma~\ref{lem:properties}.
 
 The proof of the remaining three cases are very similar to the arguments above and are left to the reader.
\end{proof}
Let us consider the marked points of $\mathcal{L}$ in a subinterval of length $\lfloor \log M\rfloor$ then there could be at most 1 of them. Varying $x$ while restricting ourselves to this interval of length $\lfloor \log M\rfloor$ we see that the number of possibilities to set the marked points in this interval is no more than
$\lfloor \log M\rfloor + 1.$
For $M$ large, say $M \ge e^4$, we have
$$=\lfloor \log M\rfloor +1 \le  \lfloor \log M\rfloor^{1.25}.$$
Therefore, there are
\begin{equation*}
\le  \lfloor \log M\rfloor^{1.25(\left \lfloor \frac{2N}{\lfloor \log M\rfloor} \right \rfloor + 1)} \ll_M e^{\frac {2.5 N \log \lfloor \log M\rfloor}{\lfloor \log M\rfloor}}
\end{equation*}
possible ways of choosing labeled marked points for $\mathcal{L}$ in $[-N,N]$. The same is true for $\mathcal{L',P,P'}$. Thus we have shown the following.
\begin{lem}[Estimate of $\mathcal{M}_N$] For $M \ge e^4$ we have
\label{lem:marked}
$$\#\mathcal{M}_N \ll_M e^{\frac{10 N\log \lfloor \log M\rfloor}{\lfloor \log M\rfloor}}.$$
\end{lem}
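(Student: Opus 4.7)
The plan is to exploit the separation property from Lemma~\ref{lem:card} to bound, independently, the number of configurations of each of the four marked sets $\mathcal{L},\mathcal{L'},\mathcal{P},\mathcal{P'}$, and then multiply. The main input is the fact that within any window of length $\lfloor \log M \rfloor$, each of these four sets can contain at most one point, so placing the marks in $[-N,N]$ reduces to choosing, for each window, either no mark or a single position.

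Concretely, I would first partition $[-N,N]$ into consecutive blocks of length $\lfloor \log M \rfloor$, obtaining $\lfloor 2N/\lfloor \log M\rfloor\rfloor + 1$ blocks. By Lemma~\ref{lem:card}, the set $\mathcal L$ contains at most one element in each block. For each block, there are therefore at most $\lfloor \log M\rfloor + 1$ choices (either no marked time, or one of the $\lfloor \log M\rfloor$ positions). Taking the product over blocks yields
\[
\#\{\text{possible }\mathcal L\} \le (\lfloor \log M\rfloor + 1)^{\lfloor 2N/\lfloor \log M\rfloor\rfloor + 1}.
\]
The same bound applies identically to $\mathcal{L'},\mathcal{P},\mathcal{P'}$, using the remaining separation statements in Lemma~\ref{lem:card}. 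Since a labeled marked configuration $\mathcal N = (\mathcal L, \mathcal L', \mathcal P, \mathcal P')$ is determined by these four sets, the cardinality of $\mathcal M_N$ is at most the fourth power of the above.

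Finally I would simplify the exponent. For $M \ge e^4$ one has $\lfloor\log M\rfloor \ge 4$, so $\lfloor \log M\rfloor + 1 \le \lfloor \log M\rfloor^{5/4}$. Hence
\[
\#\mathcal M_N \le \lfloor \log M\rfloor^{5(\lfloor 2N/\lfloor\log M\rfloor\rfloor + 1)} \ll_M \exp\!\left(\frac{10 N \log\lfloor \log M\rfloor}{\lfloor \log M\rfloor}\right),
\]
where the implied $M$-dependent constant absorbs the boundary term coming from the extra $+1$ in the exponent. No step here is really an obstacle since the geometric content is already done in Lemma~\ref{lem:card}; the only thing to be mildly careful about is tracking the four factors and the constant $1.25 = 5/4$ correctly so that the exponent $10$ (rather than, say, $8$) comes out, which is why the threshold $M \ge e^4$ is used.
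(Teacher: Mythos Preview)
Your proposal is correct and follows essentially the same argument as the paper: partition $[-N,N]$ into blocks of length $\lfloor\log M\rfloor$, use Lemma~\ref{lem:card} to get at most $\lfloor\log M\rfloor+1$ choices per block for each of $\mathcal L,\mathcal L',\mathcal P,\mathcal P'$, take the fourth power, and use $\lfloor\log M\rfloor+1\le\lfloor\log M\rfloor^{5/4}$ for $M\ge e^4$ to reach the stated bound.
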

\subsection{Configurations}
Before we end this section, we need to point out another technical detail.
For our purposes, we want to study a partition element in $X_{\le M}$ corresponding to a particular set of
labeled marked times. Since $X_{\le M}$ is compact, it is sufficient for us to study an $\eta$-neighborhood of some $x_0$
 in this partition. These are the close-by lattices which have the same set of labeled marked times. We shall see that the fact that  $N(x)=N(x_0)$, for $x$ in $x_0 B_\eta^G$, gives rise to restrictions on the position of $x$ with respect to $x_0$ (see \S
\ref{sec:restrictions}). However, just knowing that $\mathcal{N}(x_0)=\mathcal{N}(x)$ will not be sufficient for the later argument.
Hence, we need to calculate how many possible ways (in terms of vectors and planes) we can have the same labeled marked times. For this purpose, we consider the following configurations.
\subsubsection{Vectors}
Let $l$ be a marked time in the first component $\mathcal L$ of the marking $\mathcal{N}(x_0)$. Let $v_0$ be the vector in $x_0$ that is responsible for $l$ in the construction of marked times for $x_0$. Let $y=\T^{l-1}(x)$ be in $\T^{l-1}(x_0) B_\eta^{\SL_3(\R)}$ with $\mathcal{N}(x)=\mathcal{N}(x_0)$ and $v$ in $x$ that is  responsible for $l$ in the construction of marked times for $x$.  Let $v' \in x_0$ be such that $\T^{l-1}(v')g=\T^{l-1}(v) $ for some $g \in B_\eta^{\SL_3(\R)}$ with $y=\T^{l-1}(x_0) g$. We want to know how many choices for $v'$ are realized by the various choices of $x$ as above. 
\begin{lem}
\label{lem:vectorconfig}
Let $\mathcal{N}(x_0)$ be given. Also, let $l \in \mathcal L=\mathcal L(x_0)$ and $v_0 \in x_0$ be the vector which is responsible for $l$. There are two possibilities: 
\begin{enumerate}
 \item[(1)] If $l$ is the end point of a maximal interval $V$ in $V_{x_0}$, then for any $x$ with $\mathcal{N}(x)=\mathcal{N}(x_0)$ and $\T^{l-1}(x)=\T^{l-1}(x_0)g,$ with $g \in B_\eta^G$, the vector $\pm v_0\alpha^{l-1}g\alpha^{-(l-1)}$ is responsible for $l$ in $\mathcal{L}(x)$. 
 \item[(2)] If not, then there are $p,p'$ in $\mathcal{P}(x_0),\mathcal{P}'(x_0)$ respectively, with $p\le l-1\le p'$, and a set $W \subset x_0$, of size $\ll \min\{e^{p'-l},e^{(l-p)/2}\}$, such that if $x$ is a lattice such that $\mathcal{N}(x)=\mathcal{N}(x_0)$, and $\T^{l-1}(x)=\T^{l-1}(x_0)g$, with $g \in B_\eta^G$, then for some $w\in W$, $w\alpha^{l-1}g\alpha^{-(l-1)}$, is the vector responsible for $l$ in $\mathcal{L}(x)$.
 \end{enumerate}
\end{lem}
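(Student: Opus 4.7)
The plan is to identify, in each of the two cases, which primitive vectors $w \in x_0$ can serve as $v'$ -- the lattice element of $x_0$ corresponding under the bijection $x_0 \to x$, $w \mapsto wh$ with $h = \alpha^{l-1} g \alpha^{-(l-1)}$, to the vector responsible for $l$ in $\mathcal{L}(x)$. Since $g \in B_\eta^G$ is close to the identity, for $\eta$ small enough the requirements that $wh$ be $1/M$-short at time $l$ but not at time $l-1$ translate, up to an absolute multiplicative constant, into the cleaner size conditions $|w\alpha^{l-1}| \gtrsim 1/M$ and $|w\alpha^l| \lesssim 1/M$; it is this relaxed range that I will enumerate.

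For case~(1), being at an endpoint of $V$ means $\T^{l-1}(x_0) \in X_{\leq M}$, so no primitive vector of $\T^{l-1}(x_0)$ has length $\leq 1/M$ and no plane of $\T^{l-1}(x_0)$ is $1/M$-short. By the perturbation estimate of \S~\ref{sec:metrics} the same holds for $\T^{l-1}(x)$. Hence the vector responsible for $l$ in $\mathcal L(x)$ must be a primitive vector first becoming $1/M$-short at time $l$; by Lemma~\ref{lem:minkowski} together with the uniqueness built into the construction of \S~\ref{sec:markedtimes}, such a vector is unique up to sign in $x_0$, namely $\pm v_0$, and pushing forward via $h$ gives $\pm v_0 \alpha^{l-1} g \alpha^{-(l-1)}$ as the only possibility in $\mathcal L(x)$.

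For case~(2) the pair $(p,p')$ produces a plane $P_0$ of $x_0$ whose covolume $c_n$ is $\leq 1/M$ on $[p, p']$. The key observation is that any primitive $w \in x_0$ with $|\T^l(w)| \lesssim 1/M$ must lie in $P_0$: otherwise, taking a basis $u_1, u_2$ of $P_0 \cap x_0$, the integer $|\T^l(w) \cdot (\T^l(u_1) \wedge \T^l(u_2))|$ is $\geq 1$ by unimodularity of $\T^l(x_0)$, so $|\T^l(w)| \geq 1/c_l \geq M$, contradicting $|\T^l(w)| \lesssim 1/M$ for $M$ large. Hence $W \subset P_0 \cap x_0$, a two-dimensional lattice inside $P_0$. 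By \eqref{eqn:exterior} the covolume $c_n$ takes the form of a square root of a sum of two exponentials in $n$ determined by $P_0$, and the boundary conditions $c_p, c_{p'} \leq 1/M$ together with $c_{p-1}, c_{p'+1} > 1/M$ pin down the two coefficients up to constants, yielding $c_l \asymp M^{-1} \max(e^{l-p'}, e^{(p-l)/2})$. In orthonormal coordinates on $\T^l(P_0)$ aligned with the (approximately) contracting and expanding directions of $\alpha$, the size constraints force $w$ into a rectangular region of area $O(1/M^2)$, so the number of primitive lattice points there is $\ll (1/M^2)/c_l \ll M^{-1} \min(e^{p'-l}, e^{(l-p)/2})$, which yields the stated bound.

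The main obstacle I anticipate is the careful propagation of the $\eta$-tolerance through the conjugation by $\alpha^{l-1}$: one must ensure that the relaxed size conditions on $w$ simultaneously exclude all non-candidates and include every genuine candidate realized by some perturbed $x$, while keeping the dichotomy ``either $w \in P_0$ or $|\T^l(w)| \geq M$'' robust for all nearby $x$. A secondary point is to verify that the uniqueness-of-plane hypothesis used to identify $P_0$ persists for the perturbed lattices, which should follow from the same perturbation estimate together with Lemma~\ref{lem:minkowski}.
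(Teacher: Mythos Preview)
Your approach matches the paper's: in both cases you pull the responsible vector back to $x_0$ via the perturbation bound, then in case~(1) use Lemma~\ref{lem:minkowski} at time $l-1$ to force linear dependence with $v_0$, and in case~(2) show the pullback lies in the short plane $P$ and count primitive points of the rank-two lattice $\T^{l-1}(P)\cap \T^{l-1}(x_0)$ in a region of size $O(1/M)$. The only simplification the paper makes in case~(2) is that it uses just the single ball constraint $|\T^{l-1}(v')|\le e(1+\eta)/M$ (no ``rectangular region'' and no second size condition is needed) and only a \emph{lower} bound on the covolume, obtained directly from $|\T^{p-1}(P)|,\,|\T^{p'+1}(P)|\ge 1/M$ via the per-step contraction/expansion rates of $\T$ on $\bigwedge^2\R^3$ rather than your two-sided $\asymp$ estimate.
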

\begin{proof}
To simplify the notation below we set $w_0=\T^{l-1}(v_0) \in \T^{l-1}(x_0)$, $w=\T^{l-1}(v) \in y$, and $w'=\T^{l-1}(v')=w g \in \T^{l-1}(x_0)$.

We have
$$\frac{1}{M}\le |w| \le \frac{e}{M},$$
and so
\begin{align*}
|w'| & \le |w'-w|+|w|\\
& \le |w| d(g^{-1},1)+ |w|\\
& \le e(1+ \eta)/M.
\end{align*}
Also,
\begin{align*}
|w'| &\ge |w|-|w-w'|\\
&\ge (1-\eta)/M.
\end{align*}
Together
\begin{equation}
\label{eqn:w'}
\frac{1- \eta }{M} \le |w'| \le \frac{e(1+ \eta)}{M}.
\end{equation}
Assume first that $l=a$ is the left end point of the interval $V=[a,b]$ in the construction of marked times. In this case, $w'$ and $w_0$ lie in the same line in $\R^3$. Otherwise, if they were linearly independent then the plane containing both would be $e^2 (1+\eta)/M^2$-short by Lemma~\ref{lem:minkowski}. For $M \ge 3e^2$ this is a contradiction to the assumption that $l=a$.  Since we only consider primitive vectors we only have the choice of $w'=\pm w_0$.

Now, assume that $l$ is not the left end point of the interval $V$. Then, there is a plane $P$ in $x_0$ responsible for $p,p'$ with $p \le  l-1 \le p' $ such that 
\begin{align*}
&|\T^{p-1}(P)|\ge 1/M \text{ and }|\T^{p'+1}(P)|>1/M\\
&|\T^k(P)| \le 1/M \text{ for } k \in [p,p']. 
\end{align*}
 Let us calculate how many possibilities there are for $w'\in \T^{l-1}(x_0)$. By \eqref{eqn:w'} $w'$ is in the plane $\T^{l-1}(P)$ of covolume $<1$ w.r.t. $\T^{l-1}(x_0)$ since $\T^{l-1}(x_0)$ is unimodular. Since 
$$\frac{1}{M} <  |\T^{p'+1}(P)| \text{ and } \frac{1}{M} \le |\T^{p-1}(P)|, $$
we get
$$ \max\left \{\frac{e^{-(p'-l+2)}}{ M},\frac{e^{-(l-p)/2}}{M}\right \}\le |\T^{l-1}(P)|$$
(see \S~\ref{sec:T-actiononplane} for the action of $\T$ on planes). We note that the ball of radius $r$ contains at most $\ll \max\{\frac{r^2}{A},1\}$ primitive vectors of a lattice in $\R^2$ of covolume $A$. This follows since in the case of $r$ being smaller than the second successive minima we have at most 2 primitive vectors, and if $r$ is bigger, then area considerations give $\ll \frac{r^2}{A}$ many lattice points in the $r$-ball.

We apply this for $A=|\T^{l-1}(P)|\ge \max\ \left \{\frac{e^{-(p'-l+2)}}{ M},\frac{e^{-(l-p)/2}}{M}\right \}$ and $r=\frac{(1+\eta)e}{M}$ where 
$$\frac{r^2}{A}= \frac{(1+ \eta)^2e^2/M^2 }{\max\left \{\frac{e^{-(p'-l+2)}}{ M},\frac{e^{-(l-p)/2}}{M}\right \}}\ll \min \{e^{(p'-l)},e^{(l-p)/2} \},$$
which proves the lemma.
\end{proof}
\subsubsection{Planes}
Let $p$ be a marked time in the third component $\mathcal P$ of the marking $\mathcal{N}(x_0)$. Let $P_0$ be a plane in $T^{p-1}(x_0)$ that is responsible for $p$ in the construction of marked times for $x_0$. Let $y=\T^{p-1}(x)$ be in $\T^{p-1}(x_0) B_\eta^{\SL_3(\R)}$ with $\mathcal{N}(x)=\mathcal{N}(x_0)$ and $P$ in $x$ that is  responsible for $p$ in the construction of marked times for $x$.  Let $P'$ be a plane that is rational w.r.t. $x_0$ such that $\T^{p-1}(P')g=\T^{p-1}(P) $ for some $g \in B_\eta^{\SL_3(\R)}$ with $y=\T^{p-1}(x_0) g$. We want to know how many choices for $P'$ are realized by the various choices of $x$ as above. We have two cases.
\begin{lem}
\label{lem:planeconfig}
Let $\mathcal{N}(x_0)$ be given. Also, let $p \in \mathcal P=\mathcal P(x_0)$ and $P_0$ in $x_0$ be the plane which is responsible for $p$. There are two possibilities: 
\begin{enumerate}
 \item[(1)] If $p$ is the end point of a maximal interval $V$ in $V_{x_0}$, then for any $x$ with $\mathcal{N}(x)=\mathcal{N}(x_0)$ and $\T^{p-1}(x)=\T^{p-1}(x_0)g,$ with $g \in B_\eta^G$, the plane $P_0  \alpha^{p-1}g\alpha^{-(p-1)}$ is responsible for $p$ in $\mathcal{P}(x)$. 
 \item[(2)] If not, then there are $l,l'$ in $\mathcal{L}(x_0),\mathcal{L}'(x_0)$ respectively, with $l\le p-1\le l'$, and a set of planes $W \subset x_0$, of size $\ll \min\{e^{(l'-p)/2},e^{p-l}\}$, such that if $x$ is a lattice such that $\mathcal{N}(x)=\mathcal{N}(x_0)$, and $\T^{p-1}(x)=\T^{p-1}(x_0)g$, with $g \in B_\eta^G$, then for some $P\in W$, $P \alpha^{p-1}g\alpha^{-(p-1)}$, is the plane responsible for $p$ in $\mathcal{P}(x)$.
 \end{enumerate}
\end{lem}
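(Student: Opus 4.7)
My plan is to mirror the proof of Lemma~\ref{lem:vectorconfig}, invoking Lemma~\ref{lem:minkowski} in its dual form: two distinct planes of small covolume force a unique short vector in their intersection. First I would transport the problem to time $p-1$ by setting $Q_0 = \T^{p-1}(P_0)$, $Q = \T^{p-1}(P)$, and $Q' = Q g^{-1}$, a plane rational with respect to $\T^{p-1}(x_0)$ that corresponds to $\T^{p-1}(P')$. Because $\alpha^{-1}$ acts on $\bigwedge^2 \R^3$ with eigenvalues $e^{1/2}, e^{1/2}, e^{-1}$, scaling covolumes of planes by at most a factor of $e^{1/2}$ in one step, the relations $|\T^p(P_0)| \le 1/M < |\T^{p-1}(P_0)|$ yield $|Q_0| \in (1/M,\, e^{1/2}/M]$; the same bound holds for $|Q|$, and since $g \in B_\eta^G$ acts on $\bigwedge^2\R^3$ within $O(\eta)$ of the identity, $|Q'| \le e^{1/2}(1+O(\eta))/M$.

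For Case~(1), in which $p = a$ is the left endpoint of the maximal interval $V$, the condition $\T^{a-1}(x_0) \in X_{\le M}$ forces every primitive vector in that lattice to have length at least $1/M$. Supposing $Q' \ne Q_0$ and applying Lemma~\ref{lem:minkowski} to these two different planes (each of covolume $\le e^{1/2}(1+O(\eta))/M < 1$) would produce a primitive vector in $Q' \cap Q_0$ of length $\le e(1+O(\eta))^2/M^2 < 1/M$ for $M$ sufficiently large, a contradiction. Hence $Q' = Q_0$, equivalently $P' = P_0$, and the plane in $\mathcal{P}(x)$ responsible for $p$ is $P_0 \alpha^{p-1} g \alpha^{-(p-1)}$.

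For Case~(2), the construction of marked times produces, prior to the marking of $[p,p']$ for $P_0$, indices $l \in \mathcal{L}(x_0)$ and $l' \in \mathcal{L}'(x_0)$ with $l \le p-1 \le l'$ and a vector $v \in x_0$ responsible for $[l,l']$ satisfying $v \in P_0$; this containment follows from the volume argument in \S\ref{sec:observations}, since a short vector independent of a short plane would cut out a parallelepiped of volume less than $1$ inside the unimodular lattice. Setting $u = \T^{p-1}(v) \in Q_0$ (a primitive vector of length $\le 1/M$), the same argument via Lemma~\ref{lem:minkowski} forces any $Q' \ne Q_0$ to contain the unique primitive short vector of $\T^{p-1}(x_0)$, which must equal $\pm u$, so $u \in Q'$. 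Such planes are parametrized by primitive elements $\bar u'$ of the rank-$2$ quotient lattice $\bar L := \T^{p-1}(x_0)/\Z u \subset \R^3/\R u$ (of covolume $1/|u|$) satisfying $|\bar u'|\cdot |u| = |Q'| \le e^{1/2}(1+O(\eta))/M$; a standard area count yields $\ll \max\{1/(M^2|u|),\,1\}$ such planes.

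The main obstacle is then the sharp lower bound on $|u|$. Writing $|\T^n(v)|^2 = A e^n + B e^{-2n}$ with $A = v_1^2+v_2^2$ and $B = v_3^2$, I would rewrite $|\T^{p-1}(v)|^2 = e^{p-l}(Ae^{l-1}) + e^{-2(p-l)}(Be^{-2(l-1)})$; since $p-l \ge 1$ implies $e^{-2(p-l)} \le e^{p-l}$, the smaller coefficient gives $|\T^{p-1}(v)|^2 \ge e^{-2(p-l)}\bigl(Ae^{l-1} + Be^{-2(l-1)}\bigr) > e^{-2(p-l)}/M^2$. An analogous expansion around $l'+1$ yields $|\T^{p-1}(v)|^2 > e^{-(l'-p+2)}/M^2$, so $|u| \gg e^{-\min\{p-l,\,(l'-p+2)/2\}}/M$. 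Substituting this into the area count produces $\ll \min\{e^{p-l},\,e^{(l'-p)/2}\}$ candidate planes, as claimed.
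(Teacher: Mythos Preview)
Your proof is correct and follows exactly the route the paper indicates: dualize the argument for Lemma~\ref{lem:vectorconfig} by interchanging the roles of vectors and planes, using the eigenvalues of $\alpha^{\pm1}$ on $\bigwedge^2\R^3$ in place of those on $\R^3$, and then counting planes through $u$ via the rank-two quotient lattice just as the paper counts vectors inside the short plane. The only cosmetic difference is that in Case~(2) you deduce $u\in Q'$ via the two-plane clause of Lemma~\ref{lem:minkowski} (after first arguing $u\in Q_0$), whereas the straight dual of the paper's step is to observe directly that $|u|<1$ and $|Q'|<1$ force $u\in Q'$ by unimodularity; both routes are valid and yield the same bound.
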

We will not prove the lemma since a similar argument to that giving Lemma~\ref{lem:vectorconfig} gives this lemma.
\section{Main Proposition and Restrictions}
\label{sec:mainprop}
 Fix a height $M \geq 1$. Let $N \geq 1$ and consider $\mathcal{N}=\mathcal{N}(x_0)\in \mathcal{M}_N$. Let $V=V_{x_0} \subset [-N,N]$ be as before so that for any $n\in [-N,N]$, $n \in V_{x_0}$ if and only if there is a $1/M$-short plane or a $1/M$-short vector at time $n$. Define the set
$$Z_{\le M}(\mathcal{N}):= \{x \in \T^{N}( X_{\le M}) \,|\, \mathcal{N}(x)=\mathcal{N}\}.$$
Now, we state the main proposition.

\begin{prop}\label{prop:main}
There exists a constant $c_0>0$, independent of $M$, such that the set $Z(\mathcal{N})$ can be covered by
 $\ll_M e^{6N-|V|}c_0^{\frac{18N}{\lfloor \log M\rfloor}}$ Bowen $N$-balls.
\end{prop}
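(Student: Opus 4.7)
The strategy is to estimate the Haar measure of $Z(\mathcal{N})$ using the structure provided by the marking, and then convert this to a Bowen-ball cover via a standard packing argument. The $e^{6N}$ factor reflects the full entropy $3$ of Haar measure (via $\operatorname{Haar}(B_N)\asymp \eta^{8} e^{-6N}$, from the product decomposition $B_N\approx B_N^{U^+}\cdot B_N^{C}\cdot B_N^{U^-}$ with $U^{\pm}$ expanding/contracting at rate $e^{3/2}$ per step). The $e^{-|V|}$ saving will come from the cusp constraint $\h(\T^n(x))\geq M$ for $n\in V$, and the $c_0^{18N/\lfloor \log M\rfloor}$ factor will absorb the combinatorial configuration count from Lemmas~\ref{lem:vectorconfig} and~\ref{lem:planeconfig}.

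Using compactness of $X_{\le M}$ and its positive injectivity radius, I first reduce to bounding the cover of $Z(\mathcal{N})\cap x_0 B_\eta^G$ for each reference $x_0\in Z(\mathcal{N})$ in a finite net whose size depends only on $M$. For $y=x_0 g$ in this neighborhood, Lemmas~\ref{lem:vectorconfig} and~\ref{lem:planeconfig} show that the short vectors and planes of $y$ come from bounded families in $x_0$: for each $l\in\mathcal{L}$ there are $\ll \min\{e^{p'-l}, e^{(l-p)/2}\}$ choices of vector, and analogously for planes. I would partition over such configurations and bound the number of configurations by a telescoping argument pairing factors across the interleaved structure of $\mathcal{L}$ and $\mathcal{P}$; combined with the separation $l_{i+1}-l_i, p_{j+1}-p_j > \lfloor \log M\rfloor$ from Lemma~\ref{lem:card}, the product collapses to $\ll c_0^{18N/\lfloor \log M\rfloor}$.

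For a fixed configuration, the corresponding piece consists of lattices with prescribed short vectors/planes on each marked interval. The key volume estimate uses Siegel's integral formula for $\SL_3(\Z)\backslash \SL_3(\R)$ together with its dual for planes. For a single vector on $[l,l']$, the Lebesgue measure of the short box
\[
\{v\in\R^3 :\ |v\alpha^n|\le C/M \text{ for all } n\in [l, l']\}
\]
equals $\asymp e^{-(l'-l)}/M^3$ by a direct computation from $v\alpha^n = (v_1 e^{n/2}, v_2 e^{n/2}, v_3 e^{-n})$ (the binding constraints force $|v_1|, |v_2|\le C e^{-l'/2}/M$ and $|v_3|\le C e^l/M$). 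An analogous estimate for planes in $\bigwedge^2\R^3$ follows from the action~\eqref{eqn:exterior}. Applying the $k$-point Siegel formula to all maximal intervals in $V$ simultaneously (and handling the diagonal case of a single vector or plane short across multiple intervals separately via the union bound), the Haar measure of each configuration-piece is $\ll_M e^{-|V|}$. A standard packing argument then bounds the Bowen $N$-ball cover of each piece by $\operatorname{Haar}(\text{piece}\cdot B_N)/\operatorname{Haar}(B_N) \ll_M e^{-|V|}/e^{-6N} = e^{6N-|V|}$; multiplying by the configuration count completes the proof.

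The main obstacle is the configuration counting of Step~2: individually the factors $\min\{e^{p'-l}, e^{(l-p)/2}\}$ from Lemma~\ref{lem:vectorconfig} can blow up as $e^{\mathcal{O}(|V|)}$, which would swamp the $e^{-|V|}$ volume saving. The telescoping argument must carefully exploit that these large factors appear at marked times paired across the interleaving of $\mathcal{L}$ and $\mathcal{P}$, so that after cancellation each of the $\ll N/\lfloor \log M\rfloor$ marked times contributes only a constant $c_0$. A secondary obstacle is decoupling the Siegel constraints across nearby marked intervals, for which the separation $\lfloor \log M \rfloor$ from Lemma~\ref{lem:card} is below the mixing scale of $\T$; the $k$-point Siegel formula (rather than mixing) is therefore essential.
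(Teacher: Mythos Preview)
Your approach has a genuine gap in Step~2. You claim the configuration product from Lemmas~\ref{lem:vectorconfig}--\ref{lem:planeconfig} telescopes to $c_0^{O(N/\lfloor\log M\rfloor)}$ on its own, but this is false. Take for instance a single maximal interval $V=[0,N]$ with marks $l_1=0$, $l_1'=N/2$, $p_1=N/4$, $p_1'=3N/4$, $l_2=N/2$, $l_2'=N$: the configuration factor at $p_1$ is $\min\{e^{(l_1'-p_1)/2},e^{p_1-l_1}\}=e^{N/8}$ and at $l_2$ it is again $e^{N/8}$, so the product is $e^{N/4}$, not $O(1)$ per mark. In the paper the telescoping does not happen among configuration factors alone; each $\min$-factor from Lemma~\ref{lem:vectorconfig} or~\ref{lem:planeconfig} is paired with a $\max$-factor coming from the perturbation Lemmas~\ref{lem:volv} and~\ref{lem:volp} (the geometric restriction on the $U^+$-coordinate), and it is only the combined product $N_1N_2\cdots N_{2k-1}$ that collapses to $c_0^{4k}e^{2|V_{n+1}|}$. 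Since your Siegel step does not produce these $\max$-factors, your decoupling of ``configuration count'' from ``volume per configuration'' loses exactly the cancellation that makes the proposition work.

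There is also a mismatch in Step~3. Once a configuration is fixed you are imposing a condition on $g\in B_\eta^G$ (namely that specific vectors and planes of $x_0$, pushed by $g$, are short on prescribed intervals); this is a Lebesgue computation in the $g$-variable, not a Siegel integral over $X$. If instead you apply Siegel to $Z(\mathcal{N})$ without fixing configurations, you need a $k$-point formula for a \emph{mixed} family of vectors in $\R^3$ and bivectors in $\bigwedge^2\R^3$, with the short vector always lying in the short plane by Lemma~\ref{lem:minkowski}; the dependence terms in such a formula are exactly where the difficulty lies, and you have not addressed them. The paper sidesteps all of this by working directly in the $U^+$-chart: it subdivides $D_{\eta/2}^{U^+}$ at successive scales along the ordered marks, and at each scale uses Lemmas~\ref{lem:uv}--\ref{lem:volp} to discard most sub-squares. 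The saving $e^{-|V|}$ then appears as the ratio $e^{2|V|}/e^{3|V|}$ of surviving to total sub-squares, with no measure-to-cover conversion needed.
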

In the proof of Theorem \ref{thm:main} we will consider
$$\lim_{N \to \infty}\frac{\log \#Z(\mathcal{N})}{2N}.$$
Thus, in this limit, the term arising from $c_0^{\frac{18N}{\lfloor \log M\rfloor}}$ can be made small for $M$ large since $c_0$ does not depend on $M$. So, our main consideration is the $e^{6N-|V|}$ factor. On the other hand, it is easy to see that the set $Z(\mathcal{N})$ can be covered by $\ll e^{6N}$ many Bowen $N$-balls. But this does not give any meaningful conclusion. Therefore, $e^{-|V|}$ is the factor appearing in Proposition \ref{prop:main} that leads to the conclusion of Theorem \ref{thm:main}.

In proving Proposition \ref{prop:main}, we will make use of the lemmas below which give the restrictions needed in order to get the drop in the number of Bowen $N$-balls to cover the set $Z(\mathcal{N})$.
\subsection{Restrictions of perturbations}
\label{sec:restrictions}
\subsubsection{Perturbations of vectors}
\label{sec:Perturbations of vectors}
Let $v=(v_1,v_2,v_3)$ be a vector in $\R^{3}$.
\begin{lem}\label{lem:rest}For a vector $v$ of size $\ge 1/M$, if its trajectory under the action of $\T$ stays $1/M$-short in the time
interval $[1,S]$ then we must have $\frac{v_1^2+v_2^2}{v_3^2}< 2e^{-S}.$
\end{lem}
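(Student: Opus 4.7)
The plan is entirely computational: use the explicit formula for $|\T^n(v)|^2$ and evaluate at two times, namely $n=0$ (to exploit $|v|\ge 1/M$) and $n=S$ (to exploit the short condition at the far end of the interval). Since $\T^n(v) = (v_1 e^{n/2}, v_2 e^{n/2}, v_3 e^{-n})$, we have
\[
|\T^n(v)|^2 = (v_1^2+v_2^2)\,e^n + v_3^2\, e^{-2n}.
\]
The key observation is that the first summand grows and the second decays, so pushing $n$ to the largest available value $S$ gives the strongest constraint on $v_1^2+v_2^2$.

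First I would evaluate at $n=S$: the $1/M$-short hypothesis yields $(v_1^2+v_2^2)e^S \le |\T^S(v)|^2 \le 1/M^2$, hence
\[
v_1^2+v_2^2 \le \frac{e^{-S}}{M^2}.
\]
Next I would combine this with the lower bound $v_1^2+v_2^2+v_3^2 = |v|^2 \ge 1/M^2$ coming from the hypothesis on $|v|$; this gives
\[
v_3^2 \ge \frac{1}{M^2} - (v_1^2+v_2^2) \ge \frac{1-e^{-S}}{M^2}.
\]
Dividing the two inequalities then produces
\[
\frac{v_1^2+v_2^2}{v_3^2} \le \frac{e^{-S}}{1-e^{-S}}.
\]

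The only mildly subtle point — and the nearest thing here to an obstacle — is converting this into the claimed bound $< 2 e^{-S}$. This requires the factor $1/(1-e^{-S})$ to be strictly less than $2$, which amounts to $e^{-S} < 1/2$, i.e.\ $S > \log 2$. Since the interval $[1,S]$ is only meaningful when $S \ge 1$, and $\log 2 < 1$, this is automatic; I would just record this at the end of the proof. No further ingredients are needed — in particular the other times $n \in [2,S-1]$ never enter, because the convexity of $n\mapsto |\T^n(v)|^2$ means the endpoints control everything we need.
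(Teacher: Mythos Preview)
Your proof is correct and is essentially the same computation as the paper's: both compare $|v|^2\ge 1/M^2$ with $|\T^S(v)|^2\le 1/M^2$ and arrive at $\frac{v_1^2+v_2^2}{v_3^2}\le \frac{1}{e^S-1}=\frac{e^{-S}}{1-e^{-S}}<2e^{-S}$ for $S\ge 1$. The only difference is cosmetic: the paper inserts auxiliary parameters $\lambda_1\ge 1$, $\lambda_2\le 1$ into the two inequalities so that the same calculation can be reused verbatim for the slightly perturbed vectors appearing immediately afterward, whereas you prove exactly the stated lemma.
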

\begin{proof}
We will prove a slightly stronger statement. For this let $\lambda_1 \ge 1$ and $\lambda_2 \le 1$ and assume that
$$\lambda_1(v_1^2+v_2^2+v_{3}^2)\ge \frac{1}{M^2}\geq \lambda_2(v_1^2e^{S}+v_2^2e^{S}+v_{3}^2e^{-2S}).$$
This simplifies to
$$\lambda_1 v_{3}^2>(v_1^2+v_2^2)(\lambda_2 e^{S}-\lambda_1).$$
Assuming $\lambda_1,\lambda_2$ are close to 1, we must have $v_{3}\neq 0$ and
$$\frac{v_1^2+v_2^2}{v_3^2}\le \frac{\lambda_1}{\lambda_2e^S-\lambda_1}.$$
Assuming again that $\lambda_1,\lambda_2$ are close to 1 the last expression is bounded by $2e^{-S}$.
\end{proof}
We would like to get restrictions for the vectors which are close to the vector $v$ and whose trajectories behave as $v$ on the time interval $[0,S]$ . So, let $u=(u_1,u_2,u_3)$ be a vector in $ \R^{3}$ with $u=v g$ for some $g \in B_\eta^{\SL_3(\R)}$ such that $|u|\ge 1/M$ and that its forward trajectory stays $1/M$-short in the time interval $[1,S]$.

Let us first assume $g=\left( \begin{array}{ccc} 1& & \\
 & 1&\\
-t_1 &-t_2 &1  \end{array} \right) \in
B_{\eta}^{U^+}$ so that
\begin{equation*}
 \left( \begin{array}{ccc}
 u_1&u_2 &u_{3}  \end{array} \right)=
  \left( \begin{array}{ccc}
 v_1&v_2 &v_3  \end{array}\right)\left( \begin{array}{ccc}  1& & \\
 &1&\\
-t_1 &-t_2 &1  \end{array} \right) .
\end{equation*}
From Lemma~\ref{lem:rest} we know that $\frac{u_1^2+u_2^2}{u_3^2}< 2e^{-S}.$ So,
$$\frac{(v_1-v_3t_1)^2+(v_2 -v_3t_2)^2}{v_3^2} < 2e^{-S}.$$
We are interested in possible restrictions on $t_j$'s since they belong to the unstable horospherical subgroup of $\SL_{3}(\R)$ under conjugation by $\alpha=\text{diag}(e^{1/2},e^{1/2},e^{-1})$. Simplifying the left hand side, we obtain
$$(\frac{v_1}{v_3}-t_1)^2+(\frac{v_2}{v_3}-t_2)^2 < 2e^{-S}.$$
We also know $\frac{v_1^2}{v_3^2}+\frac{v_2^2}{v_3^2}<2e^{-S}.$ Together with the triangular inequality, we get
$$t_1^2+t_2^2<(\sqrt{2 e^{-S}}+\sqrt{2 e^{-S}})^2=8 e^{-S}.$$
In general, we have 
$$g=\left( \begin{array}{ccc} 1& & \\
 & 1&\\
-t_1 &-t_2 &1  \end{array} \right) \left( \begin{array}{ccc} a_{11}&a_{12} &a_{13} \\
 a_{21}&a_{22}&a_{23}\\
0 & 0&a_{33}  \end{array} \right) \in B_{\eta}^{\SL_{3}(\R)}.$$
In this case, we still claim that $$t_1^2+t_2^2< 8 e^{-S}.$$
Let 
$$w=\left( \begin{array}{ccc}
 w_1&w_2 &w_{3}  \end{array} \right)=
  \left( \begin{array}{ccc}
 v_1&v_2 &v_3  \end{array}\right)\left( \begin{array}{ccc}  1& & \\
 &1&\\
-t_1 &-t_2 &1  \end{array} \right) $$
so that 
\begin{equation}
\label{eqn:u=vg}
u=v g=w \left( \begin{array}{ccc} a_{11}&a_{12} &a_{13} \\
 a_{21}&a_{22}&a_{23}\\
0 & 0&a_{33}  \end{array} \right).
\end{equation}
We observe
$$
\T^S(u)=\T^S(w) \left( \begin{array}{ccc} a_{11}&a_{12} &a_{13} e^{-3S/2}\\
 a_{21}&a_{22}&a_{23}e^{-3S/2}\\
 0&0&a_{33}\end{array}\right) ,
$$
so that $\T^S(u) \in \T^S(w) B_{\eta}^{\SL_3(\R)}$ and $|\T^S(u)-\T^S(w)|<\eta |\T^S(u)|$ by the discussion in \S~\ref{sec:metrics}.
Hence, $|\T^S(u)|<1/M$ implies
\begin{equation}
\label{eqn:|w alpha^S|}
 |\T^S(w)| \le |\T^S(u)|+|\T^S(u)-\T^S(w)|<\frac{1+\eta}{M}.
 \end{equation}
On the other hand, since $g\in B_{\eta}^{\SL_3(\R)}$ we have
\begin{equation}
\label{eqn:|w|}
|w| \ge |u|-|u-w| > \frac{1-\eta}{M}
\end{equation}
Combining \eqref{eqn:|w alpha^S|} and \eqref{eqn:|w|} we get
$$\frac{|w|}{1-\eta}>\frac{1}{M}>\frac{|\T^S(w)|}{1+\eta}.$$
Now, the proof of Lemma~\ref{lem:rest}, for sufficiently small $\eta>0$ implies
$$\frac{w_1^2+w_2^2}{w_3^2}<2 e^{-S}.$$
Hence, we are in the previous case with $u$ replaced by $w$. So, we have $t_1^2+t_2^2<8 e^{-S}$ which proves the claim. We have shown the following.
\begin{lem}
\label{lem:uv}
There exists a sufficiently small $\eta>0$, such that for any $M,S >0$ the following holds. Let $v,u$ be vectors in $\R^{3}$ with sizes $\ge 1/M$ whose trajectories in $[1,S]$ stay $1/M$-short. Assume that $u=vg$ with $g \in  B_\eta^{\SL_3(\R)}$ and that the notation is as in \eqref{eqn:u=vg}. Then
$$t_1^2+t_2^2\leq 8 e^{-S}.$$
\end{lem}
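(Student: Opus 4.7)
The plan is to first handle the special case where $g$ is purely unipotent, $g \in B_\eta^{U^+}$, and then reduce the general case to this via the decomposition $g = U \cdot B$ with $U$ lower-triangular unipotent and $B$ block upper-triangular, as exhibited in \eqref{eqn:u=vg}.

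In the pure unipotent case $u = vU$, we have $u_3 = v_3$ and $u_i = v_i - v_3 t_i$ for $i = 1, 2$. Since both $v$ and $u$ have size $\ge 1/M$ and stay $1/M$-short on $[1,S]$, Lemma~\ref{lem:rest} applied to $v$ gives $\sqrt{(v_1/v_3)^2 + (v_2/v_3)^2} < \sqrt{2e^{-S}}$, and applied to $u$ it gives $\sqrt{(v_1/v_3 - t_1)^2 + (v_2/v_3 - t_2)^2} < \sqrt{2e^{-S}}$. The triangle inequality on $\R^2$ then yields $\sqrt{t_1^2 + t_2^2} < 2\sqrt{2e^{-S}}$, i.e.\ $t_1^2 + t_2^2 < 8 e^{-S}$.

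For the general case I set $w = vU$, so that $u = wB$. The goal is to show that $w$ essentially satisfies the hypotheses of Lemma~\ref{lem:rest} up to $(1 \pm \eta)$-factors, which reduces the problem to the unipotent case applied to $v$ and $w$. The key point is that conjugation of $B$ by $\alpha^S$ only contracts its off-diagonal entries $a_{13}, a_{23}$ by $e^{-3S/2}$, so $\alpha^{-S} B \alpha^S$ also lies in $B_\eta^{\SL_3(\R)}$ for $S \ge 0$. Using the operator-norm estimate of \S~\ref{sec:metrics}, this yields both $|w| \ge (1-\eta)|u| \ge (1-\eta)/M$ and $|\T^S(w)| \le (1+\eta)|\T^S(u)| \le (1+\eta)/M$. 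A straightforward generalization of the proof of Lemma~\ref{lem:rest} (carrying constants $\lambda_1$, $\lambda_2$ close to $1$ through the argument) then gives $(w_1^2 + w_2^2)/w_3^2 < 2 e^{-S}$, at which point the unipotent case above finishes the proof.

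The main obstacle is quantitative: the $(1 \pm \eta)$ distortions from $B$ must not spoil the clean constant $8$ on the right-hand side. This is what pins down the "sufficiently small $\eta$" in the statement: one needs $\eta$ small enough --- uniformly in $M$ and $S$ --- that the generalized Lemma~\ref{lem:rest} still delivers the constant $2$, so that the ensuing triangle inequality preserves the final bound. Once this threshold on $\eta$ is chosen, the conclusion $t_1^2 + t_2^2 \le 8 e^{-S}$ holds uniformly in $M, S > 0$.
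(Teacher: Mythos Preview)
Your proposal is correct and follows essentially the same approach as the paper: first the purely unipotent case via Lemma~\ref{lem:rest} applied to both $v$ and $u$ and the triangle inequality, then the general case by writing $g=UB$, setting $w=vU$, observing that $\alpha^{-S}B\alpha^S\in B_\eta^{\SL_3(\R)}$ so that $|w|\ge(1-\eta)/M$ and $|\T^S(w)|\le(1+\eta)/M$, and finally invoking the strengthened form of Lemma~\ref{lem:rest} (with $\lambda_1,\lambda_2$ close to $1$) to get $(w_1^2+w_2^2)/w_3^2<2e^{-S}$ and reduce to the unipotent case. This is exactly the argument the paper gives in the discussion leading up to the lemma.
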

\begin{lem}
\label{lem:volv}
Let $\eta>0$ be given. For any $S,S' > 0$, let us divide $[-2\eta,2\eta]^2$ into
small squares of side length $\frac{1}{2}\eta e^{-3S'/2}$. Then there exists a constant $c>0$ such that there are  $\ll \max \{1,e^{3S'-S}\} $ small squares that intersect with the ball $t_1^2+t_2^2 \leq 8 e^{-S}$ on $[-2\eta,2\eta]^2$.
\end{lem}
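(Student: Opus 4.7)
The plan is to treat this as a standard area-counting argument, splitting into two regimes according to the relative size of the disk and the grid. Let me abbreviate $r := \sqrt{8}\,e^{-S/2}$ for the radius of the disk $D := \{(t_1,t_2) : t_1^2+t_2^2 \le 8 e^{-S}\}$ and $s := \tfrac{1}{2}\eta\,e^{-3S'/2}$ for the side length of the small squares. The two regimes will be $r \le s$ and $r > s$, each producing one of the two terms in $\max\{1, e^{3S'-S}\}$.

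In the regime $r \le s$ the disk $D$ is contained in a $2r \times 2r$ square, which is at most a $2s \times 2s$ square; such a region can meet at most a uniformly bounded number of squares of the grid (at most $9$ once one accounts for the grid position), giving $\ll 1$ intersecting squares. This accounts for the ``$1$'' in $\max\{1, e^{3S'-S}\}$ and is sharp precisely when $e^{3S'-S} \ll 1$.

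In the regime $r > s$ I would argue by enclosing. Every small square that intersects $D$ is contained in the concentric disk of radius $r + s\sqrt{2} \le (1+\sqrt{2})r$, a set of area $\le \pi(1+\sqrt{2})^2 r^2$. Since the squares are pairwise disjoint (up to boundary), their number is bounded by this area divided by $s^2$, which is $\ll r^2/s^2$. A direct computation gives
\[
 \frac{r^2}{s^2} \;=\; \frac{8\,e^{-S}}{\tfrac{1}{4}\eta^2 e^{-3S'}} \;=\; \frac{32}{\eta^{2}}\,e^{3S'-S},
\]
so the count is $\ll e^{3S'-S}$ with an implied constant depending only on $\eta$.

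Combining the two regimes yields the claimed bound $\ll \max\{1, e^{3S'-S}\}$, with the implicit constant depending on $\eta$ alone. I do not expect any serious obstacle; the only point worth double-checking is that the exponents $-S/2$ (from the radius of $D$) and $-3S'/2$ (from the grid size) combine correctly upon squaring, producing exactly $e^{3S'-S}$, and that the restriction to $[-2\eta,2\eta]^2$ is irrelevant since $D$ is centred at the origin and, in the interesting regime, fits well inside that box.
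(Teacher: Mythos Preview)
Your argument is correct and follows essentially the same approach as the paper: both split into the two regimes according to whether the disk is small or large relative to the grid squares, handle the small-disk case by a trivial $O(1)$ bound, and handle the large-disk case by an area comparison yielding $\ll r^2/s^2 = e^{3S'-S}$. The only cosmetic differences are the precise threshold chosen (the paper compares the side length to the diameter $2\sqrt{8}e^{-S/2}$ rather than the radius) and the constant in the first case (the paper says $4$, you say $9$), neither of which affects the estimate.
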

\begin{proof}
Note that $t_1^2+t_2^2 \leq 8 e^{-S}$ defines a ball with diameter $2\sqrt{8}e^{-S/2}$. If $\frac{1}{2}\eta e^{-3S'/2} \ge 2 \sqrt{8} e^{-S/2}$ then there are 4 squares that intersects the ball. Otherwise (which makes $3S'-S$ bounded below), there can be at most $ \ll \frac{(e^{-S/2})^2}{(e^{-3S'/2})^2}=e^{3S'-S}$
small squares that intersect with the given ball. 
\end{proof}
What Lemma \ref{lem:uv} and Lemma \ref{lem:volv} say is the following:

Consider a neighborhood ${\it O}=x_0 B_{\eta/2}^{U^+}B_{\eta/2}^{U^-C}$ of $x_0$ in $X$ where as before  $U^+,U^-,$ and $C$ are the unstable, stable, and centralizer subgroups of $\SL_3(\R)$ with respect to $\alpha$, respectively. If we partition the square with side length $2\eta$ in $B_{\eta/2}^{U^+}$ into small squares with side lengths $\eta e^{-3S'/2}$, then we have $\ll \lceil \frac{2 \eta}{\eta e^{-3S'/2}} \rceil^2 \ll \lceil e^{3S'/2} \rceil^2$ many elements in this partition. Now, assume that there is a vector $v \in x_0$ with $|v|\ge 1/M$ that stays $1/M$-short in $[1, S]$ and consider the set of lattices $x=x_0 g$  in ${\it O}$ with the property that the vector $w=vg$ in $x$ behaves as $v$ in $[0,S]$. Then the above two lemmas say that this set is contained in $\leq c_0e^{3S'-S}$ many partition elements (small squares). Hence, in the proof of Proposition \ref{prop:main}, instead of $\leq c_0 \lceil e^{3S'/2} \rceil^2$ many Bowen balls we will only consider $\leq c_0 e^{3S'-S}$ many of them and this (together with the case below) will give us the drop in the exponent as appeared in Proposition~\ref{prop:main}.
\subsubsection{Perturbations of planes}
Assume that for a lattice $x \in X$ there is a rational plane $P$ w.r.t. $x$ with
$$|P|\ge 1/M \text{ and } |\T^k(P)| \le 1/M \text{ for } k \in [1,S].$$
Let $u,v$ be generators of $P$ with $|P|=|u \wedge v|$. So we have
$$|u\wedge v|\ge 1/M \ge |\T^{S}(u\wedge v)|.$$
Thus, substituting $a=u_2v_3-u_3v_2,b=u_3v_1-u_1v_3,c=u_1v_2-u_2v_1$ (cf.~\ref{eqn:exterior}) we obtain
$$a^2+b^2+c^2 \ge  a^2e^{-S}+b^2e^{-S}+c^2e^{2S},$$
which gives
\begin{equation*}
\frac{c^2}{a^2+b^2}\le \frac{1-e^{-S}}{e^{2S}-1}=e^{-2S}\frac{1-e^{-S}}{1-e^{-2S}}=e^{-2S}\frac{1}{1+e^{-S}}<e^{-2S}.
\end{equation*}
Assume $x'=xg$ for some $g\in B_\eta^{\SL_3(\R)}$. For now, let us assume that 
$$g=\left( \begin{array}{ccc}  1&& \\
&1&\\
t_1&t_2&1 \end{array} \right).$$
Let $u',v' \in x'$ be such that
\begin{align*}
\left( \begin{array}{c}  u' \\
v' \end{array} \right)=\left( \begin{array}{ccc}  u_1'&u_2'&u_3' \\
v_1'&v_2'&v_3' \end{array} \right)&=\left( \begin{array}{ccc}  u_1&u_2&u_3 \\
v_1&v_2&v_3 \end{array} \right)\left( \begin{array}{ccc}  1&& \\
&1&\\
t_1&t_2&1 \end{array} \right)\\
&=\left( \begin{array}{ccc}  u_1+t_1u_3&u_2+t_2u_3&u_3 \\
v_1+t_1v_3&v_2+t_2v_3&v_3 \end{array} \right).
\end{align*}
We let $a'=u_2' v_3' -u_3' v_2'=(u_2+t_2 u_3)v_3-u_3 (v_2+t_2 v_3)$ and hence $a'=a$. Similarly, $b'=u_3' v_1'-u_1' v_3'=b$ and let
\begin{multline*}
c'=u_1' v_2' - u_2'v_1'=\\
(u_1+t_1 u_3)(v_2+t_2 v_3)-(u_2+t_2 u_3)(v_1+t_1 v_3)=c-a t_1 -b t_2.
\end{multline*}
Now, assume that
\begin{align*}
|u'\wedge v'|\ge 1/M \text{ and } |\T^k(u'\wedge v')| \le 1/M \text{ for } k \in [1,S]
\end{align*}
which by the above implies
$$\frac{c'^2}{a'^2+b'^2}=\frac{(c-at_1-bt_2)^2}{a^2+b^2}<e^{-2S}.$$
For a general $g \in B_\eta^{\SL_3(\R)}$ we would like to obtain a similar equation. Let us write $g$ as
\begin{equation}
\label{eqn:g}
g=\left( \begin{array}{ccc}  1&& \\
&1&\\
t_1&t_2&1 \end{array} \right)\left( \begin{array}{ccc}  g_{11}&g_{12}&g_{13} \\
g_{21}&g_{22}&g_{23}\\
0&0&g_{33} \end{array} \right).
\end{equation}
Then we have
$$\T^l(x')=\T^l(xg)=\T^l\left(x\left( \begin{array}{ccc}  1&& \\
&1&\\
t_1&t_2&1 \end{array} \right)\right)\left( \begin{array}{ccc}  g_{11}&g_{12}&g_{13}e^{-\frac{3}{2}l} \\
g_{21}&g_{22}&g_{23}e^{-\frac{3}{2}l}\\
0&0&g_{33} \end{array} \right).$$
Hence the forward trajectories of $x'$ and $x\left( \begin{array}{ccc}  1&& \\
&1&\\
t_1&t_2&1 \end{array} \right)$ stay $\ll \eta$ close.
Thus, we have $$\frac{(c-at_1-bt_2)^2}{a^2+b^2}\ll e^{-2S}.$$
From the triangular inequality we obtain
$$\frac{(at_1+bt_2)^2}{a^2+b^2}\ll e^{-2S}.$$
Let $C>0$ be the constant that appeared in the last inequality.
\begin{lem}
\label{lem:LL'}
Let $P,P'$ be two dimensional lattices in $\R^{3}$ of covolume $\ge 1/M$ whose trajectories in $[1,S]$ stay $1/M$-short and assume that $P'=Pg$ for some $g\in B_{\eta}^{\SL_3(\R)}$, then for some $a,b$ (dependent on $P$) we must have in the notation of \eqref{eqn:g} that
$$\frac{(at_1+bt_2)^2}{a^2+b^2}\leq C e^{-2S}.$$
\end{lem}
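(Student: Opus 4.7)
The plan is to formalize the computation carried out in the paragraph immediately preceding the lemma. First I would pick generators $u,v$ of $P$ with $|P|=|u\wedge v|$ and set $a=u_2v_3-u_3v_2$, $b=u_3v_1-u_1v_3$, $c=u_1v_2-u_2v_1$, so that $u\wedge v=(a,b,c)$. Using the formula \eqref{eqn:exterior} for the action of $\T$ on wedges, the hypothesis $|u\wedge v|\ge 1/M\ge|\T^S(u\wedge v)|$ becomes $a^2+b^2+c^2\ge a^2e^{-S}+b^2e^{-S}+c^2e^{2S}$, which rearranges to $c^2/(a^2+b^2)\le e^{-2S}/(1+e^{-S})<e^{-2S}$. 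This gives the desired type of inequality, but for the plane $P$ itself rather than for $t_1,t_2$.

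Next I would bring in the perturbation. Writing $g$ in the form \eqref{eqn:g} as $g=u^+h$ with $u^+\in U^+$ carrying the parameters $(t_1,t_2)$ and $h$ the block lying in the stable/centralizer directions, I would first compute $u\wedge v$ after applying only the $U^+$-factor: a direct computation yields new coordinates $a'=a$, $b'=b$, and $c'=c-at_1-bt_2$. Thus if the same argument of the first step could be applied to the plane $Pu^+$, we would get $(c-at_1-bt_2)^2/(a^2+b^2)\ll e^{-2S}$, and a single application of the triangle inequality
\[
|at_1+bt_2|\le|c|+|c-at_1-bt_2|
\]
combined with the bound for $c^2/(a^2+b^2)$ from the first step would produce $(at_1+bt_2)^2/(a^2+b^2)\le Ce^{-2S}$ for some absolute constant $C$, which is precisely what the lemma asserts.

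The step that needs justification is the transfer from $P'=Pg$ to the intermediate plane $Pu^+$: we have the hypothesis $|\T^k(P')|\le 1/M$ for $k\in[1,S]$, but we need it (up to a constant) for $Pu^+$. This is where the computation displayed before the lemma becomes essential: conjugating $h$ by $\alpha^k$ replaces its off-diagonal entries $g_{13},g_{23}$ by $g_{13}e^{-3k/2},g_{23}e^{-3k/2}$, so for $k\ge 0$ the conjugate still lies in a fixed neighborhood of the identity of size $\ll\eta$. Hence the $\T^k$-trajectories of $Pu^+$ and of $P'$ stay within a factor $1+O(\eta)$ of each other in covolume, so for $\eta$ small the shortness hypothesis transfers to $Pu^+$ with only a multiplicative loss absorbed into the constant $C$.

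The main obstacle is this transfer step, because it is here that we must verify that the centralizer/stable block of $g$ really does not enter the estimate in any essential way; once that is checked, everything else is a routine concatenation of the identity $c'=c-at_1-bt_2$, the inequality for $c^2/(a^2+b^2)$ coming from shortness of the unperturbed trajectory, and the triangle inequality. Care must also be taken to verify that the constant $C$ produced is independent of $M$, $S$, and the choice of $P$, so that the lemma can later be used uniformly across the marked times $[p,p']$ in the proof of Proposition~\ref{prop:main}.
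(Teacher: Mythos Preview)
Your proposal is correct and follows essentially the same approach as the paper: derive $c^2/(a^2+b^2)<e^{-2S}$ from the shortness of $P$, compute $(a',b',c')=(a,b,c-at_1-bt_2)$ for the $U^+$-part of $g$, transfer the shortness hypothesis from $P'=Pg$ to $Pu^+$ via the conjugation identity $\alpha^{-k}h\alpha^{k}\in B_{O(\eta)}^{\SL_3(\R)}$ for $k\ge0$, and then apply the triangle inequality. Your remark that the constant $C$ must be checked to be independent of $M$, $S$, and $P$ is on point and is exactly how the paper uses the lemma downstream.
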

We note that the inequality above describes a neighborhood of the line in $\R^2$ defined by the normal vector $(a,b)$ of width $2\sqrt{C}e^{-s}.$
\begin{lem}
\label{lem:volp}
Consider the set defined by $\frac{(at_1+bt_2)^2}{a^2+b^2}\leq C e^{-2S}$ on
$[-2\eta,2\eta]^2$ and let us divide $[-2\eta,2\eta]^2$ into
small squares of side length $\frac{1}{2}\eta e^{-3S'/2}$. Then there are  $\ll \max \{e^{3S'/2},e^{3S'-S}\} $ small squares that intersect with the region $\frac{(at_1+bt_2)^2}{a^2+b^2}\le C e^{-2S}.$
\end{lem}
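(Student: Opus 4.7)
The plan is to interpret the region geometrically and then do a standard box-counting argument in two regimes. The defining inequality
\[
  \frac{(at_1+bt_2)^2}{a^2+b^2}\le Ce^{-2S}
\]
expresses that the Euclidean distance from $(t_1,t_2)$ to the line $\ell:\{at_1+bt_2=0\}$ is at most $\sqrt{C}\,e^{-S}$. Thus the region in question is the intersection of $[-2\eta,2\eta]^2$ with a strip of half-width $\sqrt{C}e^{-S}$ around the line $\ell$ through the origin. Call this strip $\Sigma_S$; its intersection with $[-2\eta,2\eta]^2$ has length at most $4\sqrt{2}\,\eta$ along the direction of $\ell$ and constant transverse width $2\sqrt{C}\,e^{-S}$.

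Next I would introduce coordinates aligned with $\ell$ so that counting small squares of side $s:=\tfrac12\eta e^{-3S'/2}$ becomes transparent. A small square intersects $\Sigma_S\cap[-2\eta,2\eta]^2$ only if its center lies within distance $\sqrt{C}e^{-S}+s\sqrt{2}$ of $\ell$ and within the enlarged square $[-2\eta-s,2\eta+s]^2$. The number of such squares is at most
\[
  N \ll \Bigl(\frac{\eta}{s}+1\Bigr)\Bigl(\frac{e^{-S}}{s}+1\Bigr),
\]
where the first factor counts the number of columns of squares that meet $\ell$ inside the box, and the second counts the number of rows of squares needed to cover the transverse width of $\Sigma_S$.

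Substituting $s=\tfrac12\eta e^{-3S'/2}$ gives
\[
  N \ll e^{3S'/2}\Bigl(\frac{e^{3S'/2-S}}{\eta}+1\Bigr)\ll e^{3S'/2}+e^{3S'-S},
\]
which is precisely $\ll\max\{e^{3S'/2},e^{3S'-S}\}$. Equivalently, the two regimes are: if $e^{-S}\lesssim s$ the strip fits within a single row of small squares along $\ell$, giving $\ll \eta/s=e^{3S'/2}$; otherwise the strip is genuinely two-dimensional, and the naive area estimate $\operatorname{area}(\Sigma_S\cap[-2\eta,2\eta]^2)/s^2\ll \eta e^{-S}/(\eta^2 e^{-3S'})=e^{3S'-S}/\eta$ dominates.

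No step here is really an obstacle: the content is entirely geometric and mirrors the analogous dichotomy in Lemma 4.2. The only thing to be slightly careful with is the boundary correction (the "$+1$" in each factor above), which is what forces the maximum of the two exponents rather than just one of them. Because $a$ and $b$ are absorbed into the direction of $\ell$ and the estimate is invariant under rotations in the $(t_1,t_2)$-plane, no additional dependence on $(a,b)$ appears in the bound.
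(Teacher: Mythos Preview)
Your proof is correct and follows essentially the same approach as the paper: both recognize the region as a strip of half-width $\sqrt{C}e^{-S}$ about the line $at_1+bt_2=0$, then split into two regimes according to whether the side length $\tfrac12\eta e^{-3S'/2}$ exceeds the strip width (yielding the linear count $\ll e^{3S'/2}$) or not (yielding the area count $\ll e^{3S'-S}$). Your product formula $(\eta/s+1)(e^{-S}/s+1)$ packages the two cases together, but the underlying dichotomy and the estimates are identical to the paper's.
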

\begin{proof}
The type of estimate depends on whether the side length $\frac{1}{2}\eta e^{-3S'/2}$ of the squares is smaller or bigger than the width $2\sqrt{C} e^{-S}$ of the neighborhood.
We need to calculate the length and the area of the region $R$ given by $$|at_1+bt_2|\leq \sqrt{C(a^2+b^2)} e^{-S}$$ restricted to $[-2\eta,2\eta]^2.$ As mentioned earlier, the inequality above describes a $\sqrt{C} e^{-S}$-neighborhood of the line $at_1+bt_2=0.$ The length of the segment of this line in $[-2\eta,2\eta]^2$ is at most $4\sqrt{2}\eta$, so that the area of $R$ is $\le 4\sqrt{2C} \eta e^{-S}$. 

If $\sqrt{C} e^{-S} \le \frac{1}{2}\eta e^{-3S'/2}$ then there are $\ll \frac{\eta}{\eta e^{-3S'/2}}= e^{3S'/2}$ many intersections. Otherwise, there are at most 
$$\ll \frac{\sqrt{C} \eta e^{-S}}{\eta^2 e^{-3S'}} \ll e^{3S'-S}$$
small squares that intersect the region $R$.
\end{proof}
\subsection{Proof of Main Proposition}
\begin{proof}[Proof of Proposition~\ref{prop:main}] 
By taking the images under a positive power of $\T$ it suffices to consider forward trajectories and the following reformulated problem:

Let $V \subset [0,N-1]$ and $x_0 \in X_{\le M}$ be such that
$$n \in V \text{ if and only if } \T^n(x_0) \in X_{\ge M}.$$
Also let $\mathcal{N}=\mathcal{N}_{[0,N-1]}(x_0)$ be the marked times for $x_0$ (defined similarly to $\mathcal N_{[-N,N]}$ as in \S~\ref{sec:markedtimes}).

We claim that 
$$Z_{\le M}^+=\{x \in X_{\le M} : \mathcal{N}_{[0,N-1]}(x)=\mathcal{N}\}$$
can be covered by $\ll_M e^{3N-|V|} c_0^{\frac{9N}{\lfloor \log M \rfloor}}$ forward Bowen $N$-balls $x B_N^+$ defined by
$$B_N^+=\bigcap_{n=0}^{N-1} \alpha^{n} B_{\eta}^{\SL_{3}(\R)}\alpha^{-n}.$$
Since $X_{\le M}$ is compact and since we allow the implicit constant above to depend on $M$ it suffices to prove the following:

Let $U^+,U^-$, and $C$ be the subgroups of $G$ introduced in \eqref{eqn:U^+}, \eqref{eqn:U^-}, and \eqref{eqn:C} respectively. Given $x_0 \in X_{\le M}$ and a neighborhood
$${\it O}=x_0 D_{\eta/2}^{U^+}B_{\eta/2}^{U^-C}$$
 of $x_0$ where as before $D_{\eta/2}^{U^+}$ is the $\eta/2$-neighborhood of $1$ in $U^+$ (identified with $\R^2$) w.r.t. maximum norm. Then we claim that the set
 $$Z_{\emph O}^+=\{x \in {\it O} : \mathcal{N}_{[0,N-1]}(x)=\mathcal{N} \}$$
can be covered by $\ll e^{3N-|V|} c_0^{\frac{9N}{\lfloor \log M \rfloor}}$ forward Bowen $N$-balls.

  If we apply $\T^n$ to ${\it O}$ we get a neighborhood of
  $\T^n(x_0)$ for which the $U^+$-part is stretched by the factor $e^{3n/2}$, while the
  second part is still in $B_{\eta/2}^{U^-C}$. By breaking the
  $U^+$-part into $\lceil e^{3n/2} \rceil^2$ sets of the form $u_i^+
  D_{\eta/2}^{U^+}$ for various $u_i^+ \in U^+$ we can write $\T^n(\emph
  O)$ as a union of $\lceil e^{3n/2} \rceil^2$ sets of the form 
  $$\T^n(x_0)u_i^+D_{\eta/2}^{U^+}\alpha^{-n} B_{\eta/2}^{U^-C}\alpha^{n}.$$
 Hence we got similar neighborhoods as before. If we take the
 pre-image under $\T^n$ of this set, we obtain the set
 $$\T^{-n}(\T^n(x_0)u_i^+)\alpha^nD_{\eta/2}^{U^+}\alpha^{-n} B_{\eta/2}^{U^-C}.$$
 Notice that $\T^{-n}(\T^n(x_0)u_i^+)\alpha^nD_{\eta/2}^{U^+}\alpha^{-n} B_{\eta/2}^{U^-C}$ is contained in the forward Bowen $n$-ball $\T^{-n}(\T^n(x_0)u_i^+)B_n^+.$ Indeed by assumption on the metrics (see \S~\ref{sec:metric on U^+}) we have $D_\epsilon \subset B_\epsilon$  and so for $0\le k < n$ we have 
$$
\alpha^{-k}(\alpha^n D_{\eta/2}^{U^+} \alpha^{-n})\alpha^k 
\subset \alpha^{n-k} B_{\eta/2}^{U^+} \alpha^{-(n-k)} \alpha^{-k}B_{\eta/2}^{U^- C} \alpha^k \subset B_{\eta/2}^{U^+}B_{\eta/2}^{U^-C}\subset B_{\eta}^{\SL_3(\R)}.
$$
 We would like to reduce the number of $u_i^+$'s, so that we do not have to
 use all $\lceil e^{3n/2} \rceil^2$ forward Bowen $n$-balls to cover the set $Z_{\it O}^{+}$.

We can decompose $V$ into maximal intervals $V_1,V_2,\dots,V_m$ for some $m$. We note here that $m \le |\mathcal{L}|+|\mathcal{P}|$ so that from Lemma~\ref{lem:card} we obtain
\begin{equation}
\label{eqn:boundform}
m \le \frac{2N}{\lfloor \log M \rfloor}+2
\end{equation}
Now, write $[0, N-1]\setminus V = W_1 \cup W_2 \cup ... \cup W_l$ where $W_i$'s
 are maximal intervals. A bound similar to \eqref{eqn:boundform} also holds for $l$.
 
 We will consider intervals $V_j$ and $W_i$ in their respective
 order in $[0, N-1]$. At each stage we will divide any of the sets
 obtained earlier into $\lceil e^{3|V_j|/2}\rceil^2$- or $\lceil e^{3|W_i|/2}\rceil^2$- many
 sets, and in the case of $V_j$ show that we do not have to keep
 all of them. We inductively prove the following:

  For $K \leq N$ such that $[0,K]=V_1 \cup V_2 \cup ... \cup V_n \cup W_1 \cup W_2 \cup ... \cup W_{n'} $ the set $Z_{{\it O}}^+$ can be covered by
 $\ll e^{3K}e^{-(|V_1|+...+|V_{n}|)} c_0^{4\frac{|V_1|+...+|V_n|}{\lfloor \log M\rfloor}+4n+n'}$
 many pre-images under $\T^K$ of sets of the form
$$\T^K(x_0)u^+D_{\eta/2}^{U^+}\alpha^{-K} B_{\eta/2}^{U^-C}\alpha^K$$
 and hence can be covered by $\ll e^{3K}e^{-(|V_1|+...+|V_{n}|)}c_0^{4\frac{|V_1|+...+|V_n|}{\lfloor \log M\rfloor}+4n+n'}$ many forward Bowen K-balls. When $K=N$ we obtain the proposition.

 For the inductive step, if the next interval is $W_{n'+1}$ then after dividing the set $\T^K(x_0)u^+B_{\eta/2}^{U^+}\alpha^{-K} B_{\eta/2}^{U^-C}\alpha^K$ into $\lceil e^{3|W_{n'+1}|/2}\rceil^2\le 4e^{3|W_{n'+1}|} $ many sets of the form 
 $$\T^{K+|W_{n'+1}|}(x_0)u^+B_{\eta/2}^{U^+}\alpha^{-K-|W_{n'+1}|} B_{\eta/2}^{U^-C}(1)\alpha^{K+|W_{n'+1}|}$$
 we just consider all of them, and hence have that $Z_{\emph O}^+$ can be covered by 
 $$\ll e^{3(K+|W_{n'+1}|)}e^{-(|V_1|+...+|V_{n}|)} c_0^{4\frac{|V_1|+...+|V_n|}{\lfloor \log M\rfloor}+4n+n'+1}$$
 many forward Bowen $K+|W_{n'+1}|$-balls (assuming $c_0 \ge 4$). 
 
  So, assume that the next time interval is
 $V_{n+1}=[K+1,K+R]$. Pick one of the sets obtained in an earlier step
 and denote it by 
 $$Y=\T^K(x_0)u^+B_{\eta/2}^{U^+}\alpha^{-K} B_{\eta/2}^{U^-C}\alpha^K.$$

We are interested in lattices $x \text{ in } Y\cap X_{\le M}$ such that 
$$\mathcal{N}_{[0,R]}(x)=\mathcal{N}_{[0,R]}(\T^K(x_0))=\{\mathcal{L,L',P,P'}\}.$$
We have
$$\mathcal{L}=\{l_1<l_2<...<l_k\},\,\, \mathcal{L'}=\{l_1'<l_2'<...<l_k'\}$$
and
$$\mathcal{P}=\{p_1<p_2<...<p_{k'}\}, \,\,\mathcal{P'}=\{p_1'<p_2'<...<p_{k'}'\}$$
for some $k,k' \ge 0$. For simplicity of notation assume that $K+1=l_1.$ We note that 
$$K+1=l_1< p_1<l_2<p_2<...<\min\{l_k,p_{k'}\}<\max\{l_k,p_{k'}\}.$$
This easily follows from the construction of labeled marked times together with Lemma~\ref{lem:properties}. So, we can divide the interval $V_{n+1}$ into subintervals 
$$[l_1,p_1],[p_1,l_2],...,[\min\{l_k,p_{k'}\},\max\{l_k,p_{k'}\}],[\max\{l_k,p_{k'}\},K+R].$$
We consider each of the (overlapping) intervals in their respective order. 

Let us define $c_0$ to be the maximum of the implicit constants that appeared in the conclusions of Lemma~\ref{lem:vectorconfig}, Lemma~\ref{lem:planeconfig}, Lemma~\ref{lem:volv}, and Lemma~\ref{lem:volp}. 

We would like to apply Lemma~\ref{lem:volv} and Lemma~\ref{lem:volp} to obtain a smaller number of forward Bowen $K+|V_{n+1}|$-balls to cover the set $\T^{-K}(Y)$. Assume for example that there is a vector $v$ in a lattice $x$ that is getting $1/M$-short and staying short in some time interval, also assume that there is a vector $u$ in a lattice $xg$ for some $g\in B_{\eta}^{\SL_3(\R)}$ which behaves the same as $v$. However, we can apply Lemma~\ref{lem:volv} only if we know that $u=vg.$ Thus, it is necessary to know how many vectors $w'$ there are in $x$ for which $u=w'g$ for some $g$. This is handled by Lemma~\ref{lem:vectorconfig}. Similar situation arises when we want to apply Lemma~\ref{lem:volp}, and this case we first need to use Lemma~\ref{lem:planeconfig}.

Let us start with the interval $[l_1,p_1]$. Let us divide the set $Y \cap X_{\le M}$ into $\lceil e^{3(p_1-l_1)/2}\rceil^2$ small sets by partitioning the set $D_{\eta/2}^{U^+}$ in the definition of $Y$ as we did before. Since $l_1$ is the left end point of $V_{n+1}$ we see that the assumptions of Lemma~\ref{lem:uv} are satisfied in the sense that if there is a lattice $\T^{l_1-1}(x_0)g$ which has the same set of marked times as $\T^{l_1-1}(x_0)$ for some $g\in B_{\eta}^{\SL_3(\R)},$ then there are unique vectors $v\in \T^{l_1-1}(x_0)$ and $u=v g \in \T^{l_1-1}(x_0)g$ which are of size $\ge 1/M$ and stay $1/M$-short in $[l_1,l_1']$. (cf. Lemma~\ref{lem:vectorconfig}). Now, from Lemma~\ref{lem:uv} and Lemma~\ref{lem:volv} with $S'=p_1-l_1$ and $S=l_1'-l_1$ we see that we only need to consider 
\begin{equation}
\label{eqn:N_1}
\le c_0\max\{1,e^{3(p_1-l_1)-(l_1'-l_1)}\}=:N_1
\end{equation}
 of these $\lceil e^{3(p_1-l_1)/2}\rceil^2$  sets (see the discussion at the end of \S~\ref{sec:Perturbations of vectors}). Thus, we obtain sets of the form 
 $$\T^{p_1}(x_0)u^+D_{\eta/2}^{U^+}\alpha^{-p_1} B_{\eta/2}^{U^-C}\alpha^{p_1}.$$
Now, let us consider the next interval $[p_1, l_2]$. Divide the sets obtained earlier into $\lceil e^{3(l_2-p_1)/2}\rceil^2$ subsets for which the $U^+$-component is of the from $u^+D_{e^{-3(l_2-p_1)/2}\eta/2}^{U^+}$. We would like to apply Lemma~\ref{lem:volp}. However, Lemma~\ref{lem:volp} concerns itself with the restrictions on $g$ arising from common behaviors of two planes $P,P'=Pg$ and we only know the common behavior of the lattices. Moreover, if $P_0$ (resp. $P$) is the plane that is rational w.r.t. $\T^{p_1}(x_0)$ (resp. $\T^{p_1}(x_0)g$) which is responsible for the marking of $[p_1,p_1']$ then we do not necessarily know that $P=P_0 g$. On the other hand, we see from Lemma~\ref{lem:planeconfig} that there are $\le c_0  \min\{e^{(l_1'-p_1)/2},e^{p_1-l_1}\}$ choices of planes $P'$ that are rational w.r.t. $\T^{p_1}(x_0)$ for which we could possibly have $P=P'g.$ For each choice we can apply Lemma~\ref{lem:volp} with $S'=l_2-p_1$ and $S=p_1'-p_1$. Thus, for each choice we need to consider only $\le c_0 \max\{e^{3(l_2-p_1)/2},e^{3(l_2-p_1)-(p_1'-p_1)}\}$ of the $\lceil e^{3(l_2-p_1)/2}\rceil^2$ subsets. Thus, in total, we need to consider only
\begin{equation}
\label{eqn:N_2}
\le c_0^2  \min\{e^{(l_1'-p_1)/2},e^{p_1-l_1}\} \max\{e^{3(l_2-p_1)/2},e^{3(l_2-p_1)-(p_1'-p_1)}\}=:N_2
\end{equation}
of these subsets. 

Taking the images of these sets under $\T^{l_2-p_1}$ we obtain sets of the form
$$\T^{l_2}(x_0)u^+D_{\eta/2}^{U^+}\alpha^{-l_2} B_{\eta/2}^{U^-C}\alpha^{l_2}.$$
Now, let us consider the interval $[l_2,p_2]$ and let us divide the sets obtained earlier into $\lceil e^{3(p_2-l_2)/2}\rceil^2$ subsets of the form
$$\T^{p_2}(x_0)u^+D_{\eta/2}^{U^+}\alpha^{-p_2} B_{\eta/2}^{U^-C}\alpha^{p_2}.$$
From Lemma~\ref{lem:vectorconfig} we know that there are $\le c_0 \min \{e^{p_1'-l_2},e^{(l_2-p_1)/2}\}$ many configurations and for each of them we can apply Lemma~\ref{lem:volv} with $S'=p_2-l_2$ and $S=l_2'-l_2$. So, for each configuration we need only $\le c_0\max\{1,e^{3(p_2-l_2)-(l_2'-l_2)}\}$ many of the subsets. Thus, we need 
\begin{equation}
\label{eqn:N_3}
\le c_0^2 \min \{e^{p_1'-l_2},e^{(l_2-p_1)/2}\} \max\{1,e^{3(p_2-l_2)-(l_2'-l_2)}\}=:N_3
\end{equation}
many of these subsets. Continuing in this way at the end of the inductive step we consider the interval $[\max\{l_k,p_{k'}\},K+R]$. Assume that $\max\{l_k,p_{k'}\}=l_k$ so that $l_k'=K+R$ and $k'=k-1$ (the other case is similar and left to the reader).  We have the sets of the form
$$\T^{l_k}(x_0)u^+D_{\eta/2}^{U^+}\alpha^{-l_k} B_{\eta/2}^{U^-C}\alpha^{l_k}$$
that are obtained in the previous step. Let us divide them into $\lceil e^{3(l_k'-l_k)/2} \rceil^2$ small sets. By Lemma~\ref{lem:vectorconfig} we have $\le c_0 \min \{e^{p_{k-1}'-l_k},e^{(l_k-p_{k-1})/2}\}$ configurations and for each we apply Lemma~\ref{lem:volv} with $S'=S=l_k'-l_k$. Hence, we need to consider only 
\begin{equation}
\label{eqn:N_{2k-1}}
\le c_0^2\min \{e^{p_{k-1}'-l_k},e^{(l_k-p_{k-1})/2}\} e^{3(l_k'-l_l)-(l_k'-l_k)}=:N_{2k-1}
\end{equation}
of them. Thus, in the inductive step we divided the sets obtained earlier into
 $$\lceil e^{3(p_1-l_1)/2}\rceil^2 \lceil e^{3(l_2-p_1)/2}\rceil^2 \cdots \lceil e^{3(l_k'-l_k)/2}\rceil^2$$
 many parts and deduced that we only need to
 take
 \begin{equation}
 \le N_1 N_2 N_3 \cdots N_{2k-1}
 \end{equation}
  many of them where each set is of the form
$$\T^{K+R}(x_0)u^+D_{\eta/2}^{U^+}\alpha^{-K-R} B_{\eta/2}^{U^-C}\alpha^{K+R}.$$
On the other hand, let us multiply the $\max$ term of \eqref{eqn:N_1} with the $\min$ term of \eqref{eqn:N_2} to get
$$\max\{1,e^{3(p_1-l_1)-(l_1'-l_1)}\}  \min\{e^{(l_1'-p_1)/2},e^{p_1-l_1}\}.$$
If $\max\{1,e^{3(p_1-l_1)-(l_1'-l_1)}\}=e^{3(p_1-l_1)-(l_1'-l_1)}$ then clearly the multiplication above is $\le e^{3(p_1-l_1)-(l_1'-l_1)}e^{(l_1'-p_1)/2}\le e^{2(p_1-l_1)}. $ Otherwise, it is $\le e^{p_1-l_1}$. Thus, in either case we have
$$\le e^{2(p_1-l_1)}.$$
Similarly, let us multiply the $\max$ term of \eqref{eqn:N_2} with the $\min$ term of \eqref{eqn:N_3}
$$\max\{e^{3(l_2-p_1)/2},e^{3(l_2-p_1)-(p_1'-p_1)} \}\min \{e^{p_1'-l_2},e^{(l_2-p_1)/2}\}. $$
If $\max\{e^{3(l_2-p_1)/2},e^{3(l_2-p_1)-(p_1'-p_1)}\}=e^{3(l_2-p_1)-(p_1'-p_1)}$ then the above multiplication is $\le e^{3(l_2-p_1)-(p_1'-p_1)} e^{p_1'-l_2}=e^{2(l_2-p_1)}.$ Otherwise, it is 
$$\le e^{3(l_2-p_1)/2} e^{(l_2-p_1)/2} =e^{2(l_2-p_1)}.$$
Hence, in either case we have that the product is $\le e^{2(l_2-p_1)}.$

We continue in this way until we have considered all $\max$ and $\min$ terms. Thus, we obtain that
\begin{align*}
N_1 N_2 N_3 \cdots N_{2k-1} &\le c_0^{4k} e^{2(p_1-l_1)}e^{2(l_2-p_1)}\cdots e^{2(p_{k-1}-l_{k-1})}e^{2(l_k'-l_k)}\\
 &=c_0^{4k} e^{2(p_1-l_1)+2(l_2-p_1)+\cdots +2(l_k'-l_k)}\\
 &=c_0^{4k} e^{2|V_{n+1}|}
  \end{align*}
 
We know that $k$ is the number of elements of $\mathcal{L}$ restricted to the interval $V_{n+1}$. From Lemma~\ref{lem:card}  we have that $k \leq \frac{|V_{n+1}|}{\lfloor \log M\rfloor}+1$. Therefore, for the inductive step $K+|V_{n+1}|$, we get that the set  $Z_{\emph O}^+(V)$ can be covered by
 \begin{align*}
 &\ll e^{3K}e^{-(|V_1|+...+|V_{n}|)}c_0^{4\frac{|V_1|+...+|V_{n}|}{\lfloor \log M\rfloor}+4n+n'}e^{2|V_{n+1}|}c_0^{4\frac{|V_{n+1}|}{\lfloor \log M\rfloor}+4}\\
 &=e^{3(K+|V_{n+1})|}e^{-(|V_1|+...+|V_{n+1}|)}c_0^{4\frac{|V_1|+...+|V_{n+1}|}{\lfloor \log M\rfloor}+4(n+1)+n'}
 \end{align*}
 many forward Bowen $K+|V_{n+1}|$-balls.

 Hence, letting $K=N$ together with \eqref{eqn:boundform} we see that the set $Z_{\emph O}^+(V)$ can be covered by
 $\leq e^{3N-|V|}c_0^{\frac{4|V|}{\lfloor \log M\rfloor}+\frac{5N}{\lfloor \log M\rfloor}}\leq e^{3N-|V|}c_0^{\frac{9N}{\lfloor \log M\rfloor}}$ many forward Bowen N-balls. 
 \end{proof}

\section{Proof of Theorem~\ref{thm:main}}
\label{sec:mainthm}
Our main tool in proving Theorem~\ref{thm:main} will be Lemma~\ref{entropy}.
\begin{proof}[Proof of the Theorem~\ref{thm:main}] Note first that it is sufficient to
consider ergodic measures. For if $\mu$ is not ergodic, we can
write $\mu$ as an integral of its ergodic components $\mu=\int
\mu_t d\tau(t)$ for some probability space $(E,\tau)$, see for example \cite[Theorem~6.2]{EinWar}. Therefore,
we have $\mu(X_{\ge M})=\int \mu_t(X_{\ge M})d\tau(t)$, but also
$h_{\mu}(\T)=\int h_{\mu_t}(\T) d\tau(t)$, see for example \cite[Theorem~8.4]{WB}, so that the
desired estimate follows from the ergodic case.

Suppose that $\mu$ is ergodic. We would like to apply
Lemma~\ref{entropy}. For this we need to find an upper bound for
covering $\mu$-most of the space $X$ by Bowen $N$-balls. So, let $M\ge 100$ be such that $\mu(X_{\le M})>0$.
Thus, ergodicity of $\mu$ implies that $\mu(\bigcup_{k=0}^{\infty} \T^{-k}
X_{\le M})=1$. Hence, for every $\epsilon>0$ there is a constant $K \geq 1$
such that $Y=\displaystyle\bigcup_{k=0}^{K-1} \T^{-k} (X_{\le M})$
satisfies
$\mu(Y)>1-\epsilon.$\\
Moreover, the pointwise ergodic theorem implies
\[
\frac{1}{2N-1}\sum_{n=-N+1}^{N-1}1_{X_{\geq M}}(\T^n(x))\to \mu(X_{\geq M})
\]
as $N \to \infty$ for a.e. $x \in X$. Thus, given $\epsilon
>0$, there exists $N_0$ such that for $N>N_0$ the average on the
left will be bigger than $\mu(X_{\geq M})-\epsilon$ for any $x \in
X_1$ for some $X_1 \subset X$ with measure $\mu(X_1)> 1-\epsilon$.
Clearly, for any $N$ we have $\mu(Z)>1-2\epsilon$ where
\[Z=X_1\cap\T^NY.\]
Now, we would like to find an upper bound for the number of Bowen
$N$-balls needed to cover the set $Z$. Here $N \rightarrow \infty$
while $\epsilon$ and hence $K$ are fixed. Since
$Y=\displaystyle\bigcup_{k=0}^{K-1} \T^{-k} X_{\le M}$, we can
decompose $Z$ into $K$ sets of the form
\[Z'=X_1\cap\T^{N-k}X_{\le M}\]
but since $K$ is fixed, it suffices to find an upper bound for the
number of Bowen $N$-balls needed to cover one of these. Consider the set
$Z'$ which we split into the sets
$Z(\mathcal{N})$ as in Proposition~\ref{prop:main} (applied to the parameter $N-k$ instead of $N$) for the
various subsets $\mathcal{N} \in \mathcal{M}_{N-k}$. By
Lemma~\ref{lem:marked} we know that we need $\ll_M e^{\frac{10N\log \lfloor \log M\rfloor}{\lfloor \log M\rfloor}}$ many of these under the assumption that $M\ge 100>e^4$. Moreover, by our assumption on $X_1$ we only need to look at sets $V_x \subset
[-N+k+1,N-k-1]$ with $|V_x| \geq (\mu(X_{\geq M })-2\epsilon)(2N-1)$ (where we assume that $N$ is sufficiently large).
On the other hand, Proposition~\ref{prop:main}
gives that each of those sets $Z(\mathcal{N})$ can be covered by
$\leq e^{6N-|V_x|}c_0^{\frac{18N}{\lfloor \log M\rfloor}}$ Bowen $(N-k)$-balls for some constant $c_0>0$ that does not depend on $M$. It is easy to see from the definition that a Bowen $(N-k)$-ball can be covered by at most $c_1^k$ many Bowen $N$-balls. Together we see that $Z$ can be covered by
 $$\ll_{M,K} e^{\frac{10N\log \lfloor \log M\rfloor}{\lfloor \log M\rfloor}}c_0^{\frac{18N}{\lfloor \log M\rfloor}}e^{6N-(\mu(X_{\geq M })-2\epsilon)(2N-1)}$$ many Bowen
 $N$-balls. Applying Lemma~\ref{entropy} we arrive at
 \begin{eqnarray*}
  h_{\mu}(\T)&\leq
&\lim_{\epsilon \to 0}\liminf_{N \rightarrow
\infty}\frac{\log BC(N,\epsilon)}{2N} \\
&\leq &\lim_{\epsilon \to 0}  (3-(\mu(X_{\geq M })-2\epsilon)+O(\frac{\log  \log M}{ \log M})\\
&\le & 3-\mu(X_{\geq M })+O(\frac{\log  \log M}{ \log M})
\end{eqnarray*}
 which completes the proof for any sufficiently large $M$ with $\mu(X_{\le M})>0$. 
 However, we claim that the same conclusion holds for any sufficiently large $M$ independent of $\mu$ (which e.g.\ is crucial for proving Corollary~\ref{cor:lim}). 
 
 If $\mu(X_{\leq100})>0$ then the claim is true by the above discussion. So, assume that $\mu(X_{\leq 100})=0$ and let
 $$M_\mu=\inf\{M >100 : \mu(X_{\leq M})>0\}.$$
Since $\mu(X_{\le M})>0$ for any $M > M_\mu \ge 100$ we have by the discussion above
\begin{equation}
\label{eqn:h}
 h_{\mu}(\T) \leq 3-\mu(X_{\ge M })+O(\frac{\log \log M}{\log M}).
 \end{equation}

If $\mu(X_{\le M_{\mu}})>0$ then $\eqref{eqn:h}$ also holds for $M=M_{\mu}$ by the above. If on the other hand, $\mu(X_{\le M_{\mu}})=0$ then $\lim_{n \to \infty}\mu(X_{\ge M_{\mu}+\frac{1}{n}})=\mu(X_{>M_{\mu}})=\mu(X_{\ge M_{\mu}})$
and \eqref{eqn:h} for $M=M_{\mu}$ follows from \eqref{eqn:h} for $M=M_{\mu}+\frac{1}{n}.$ Since $\mu(X_{\ge M_{\mu}})=1$ this simplifies to
$$h_{\mu}(\T)\le 2+O(\frac{\log  \log M}{ \log M}).$$
Since $\frac{\log  \log M}{ \log M}$ is a decreasing function for $M\ge 100$ and $\mu(X_{\ge M})=1$ for $M \le M_{\mu}$ we obtain that \eqref{eqn:h} trivially also holds for any $M\in [100,M_{\mu}).$
  \end{proof}
\section{Limits of measures with high dimension}
\label{sec:locdim}
In this section we prove Theorem~\ref{thm:dim} and Corollary~\ref{cor:sing2}. Our main tool is a version of Proposition~\ref{prop:main}. Let $N,M>0$ be given. For any $x$ we define $V_{x}\in [0,N-1]$ to be the set of times $n \in [0,N-1]$ for which $\T^n(x) \in X_{\ge M}.$ 
Now, Proposition~\ref{prop:main} can be rephrased as follows.
\begin{prop}
\label{prop:main'}
For a fixed set $\mathcal{N}=\mathcal{N}_{[0,N-1]}(x_0)$ of labeled marked times in $[0,N-1]$ we have that the set
 $$Z^+(\mathcal{N})=\{x \in X_{\le M} : \mathcal{N}_{[0,N-1]}(x)=\mathcal{N}_{[0,N-1]} \}$$
can be covered by $\ll_M e^{3N-|V_{x_0}|}c_0^{\frac{9N}{\lfloor \log M \rfloor}}$ many sets of the form
$$\T^{-N}(\T^N(x)u^+)D_{\frac{\eta}{2}e^{-3N/2}}^{U^+} B_{\frac{\eta}{2}}^{U^-C}.$$
\end{prop}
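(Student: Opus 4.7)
The plan is to observe that Proposition~\ref{prop:main'} is essentially the intermediate covering produced during the induction in the proof of Proposition~\ref{prop:main}, read one step before the final passage to Bowen balls. Indeed, in that inductive scheme we successively cover the set in question by sets of the form
\[
\T^{-K}\bigl(\T^{K}(x_{0})u^{+}\bigr)\,\alpha^{K}D_{\eta/2}^{U^{+}}\alpha^{-K}\,B_{\eta/2}^{U^{-}C},
\]
and only at the very end do we invoke $D_{\eta/2}^{U^{+}}\subset B_{\eta/2}^{U^{+}}$ together with the compatibility of $\alpha$-conjugation with the forward Bowen set $B_{N}^{+}$ to rewrite each such set as a single forward Bowen $N$-ball. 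Stopping the induction at $K=N$ and using the identity $\alpha^{N}D_{\eta/2}^{U^{+}}\alpha^{-N}=D_{(\eta/2)e^{-3N/2}}^{U^{+}}$ yields sets of exactly the shape required by Proposition~\ref{prop:main'}, with the same counting bound.

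In more detail, I would repeat the inductive scheme verbatim. Decompose $[0,N-1]$ into the maximal intervals $V_{1},\dots,V_{m}$ of $V_{x_{0}}$ and the complementary intervals $W_{1},\dots,W_{\ell}$, with $m,\ell\ll N/\lfloor \log M \rfloor$ by Lemma~\ref{lem:card}. For each $W_{i}$, simply partition the $U^{+}$-box of each current set into $\lceil e^{3|W_{i}|/2}\rceil^{2}$ smaller $U^{+}$-boxes of the same shape and keep all of them, contributing a factor $e^{3|W_{i}|}$ to the running count. For each $V_{j}$, subdivide further at the marked times $l_{s},p_{t}$; on every resulting sub-interval first apply Lemma~\ref{lem:vectorconfig} or Lemma~\ref{lem:planeconfig} to control the number of vectors or planes in $x_{0}$ that could be responsible for the marking under an $\eta$-perturbation, and then, for each such candidate, apply Lemma~\ref{lem:volv} or Lemma~\ref{lem:volp} to cut down the number of $U^{+}$-subboxes compatible with the prescribed short behaviour. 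The resulting factors $N_{1}N_{2}\cdots N_{2k_{j}-1}$ telescope to $c_{0}^{4k_{j}}e^{2|V_{j}|}$, and summing over $j$ together with Lemma~\ref{lem:card} assembles the total count into $e^{3N-|V_{x_{0}}|}c_{0}^{9N/\lfloor \log M \rfloor}$ sets of the advertised form.

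The main obstacle, as in the proof of Proposition~\ref{prop:main}, is the combinatorial bookkeeping at the interfaces between consecutive marked sub-intervals: one has to consistently pair the $\max$-term of $N_{s}$ with the $\min$-term of $N_{s+1}$ so that their product collapses to $e^{2(\text{length of sub-interval})}$. No new ingredient beyond what is used for Proposition~\ref{prop:main} is required, since we simply halt the argument one step before the final Bowen-ball conversion; equivalently, Proposition~\ref{prop:main'} records the natural output of the induction, while Proposition~\ref{prop:main} is the immediate corollary obtained by one last containment. In particular, the same constant $c_{0}$ (coming from Lemmas~\ref{lem:vectorconfig}, \ref{lem:planeconfig}, \ref{lem:volv} and \ref{lem:volp}) and the same exponent $9$ appear in the final estimate.
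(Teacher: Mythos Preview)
Your proposal is correct and follows exactly the paper's approach: the paper's proof of Proposition~\ref{prop:main'} simply recalls that the induction carried out for Proposition~\ref{prop:main} already produced a cover of $Z_{\it O}^{+}$ by pre-images under $\T^{N}$ of sets $\T^{N}(x_{0})u^{+}D_{\eta/2}^{U^{+}}\alpha^{-N}B_{\eta/2}^{U^{-}C}\alpha^{N}$, and then applies the identity $\alpha^{N}D_{\eta/2}^{U^{+}}\alpha^{-N}=D_{(\eta/2)e^{-3N/2}}^{U^{+}}$ together with compactness of $X_{\le M}$ to globalize. The only point you leave implicit is that last compactness step, which is what justifies the dependence $\ll_{M}$ when passing from the local sets $Z_{\it O}^{+}$ to the full $Z^{+}(\mathcal{N})$.
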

\begin{proof}
In the proof of Proposition~\ref{prop:main} we inductively proved that the set 
$$Z_{\it O}^+=\{x \in {\it O} : \mathcal{N}_{[0,N-1]}(x)=\mathcal{N}_{[0,N-1]} \}$$ can be covered by
$e^{3N-|V_{x_0}|} c_0^{\frac{9N}{\lfloor \log M \rfloor}}$ many pre-images under $\T^N$ of sets of the form
$$\T^N(x_0)u^+D_{\eta/2}^{U^+}\alpha^{-N} B_{\eta/2}^{U^-C}\alpha^N.$$
So, $Z_{\it O}^+$ can be covered by the sets of the form
$$\T^{-N}(\T^N(x_0)u^+)\alpha^N D_{\eta/2}^{U^+}\alpha^{-N} B_{\eta/2}^{U^-C}.$$
This completes the proof since we have $\alpha^N D_{\eta/2}^{U^+}\alpha^{-N}=D_{\frac{\eta}{2}e^{-3N/2}}^{U^+}$ and since $X_{\le M}$ is compact.
\end{proof}
In the following let $\nu$ be a probability measure on $X$ which has a dimension at least $d$ in the unstable direction (see \eqref{eqn:unstable}). We wish to prove Theorem~\ref{thm:dim}.

 For any $\kappa>0$ small we are interested in the upper estimate for
 $$\nu(\{x \in X_{<M} : |V_x|>\kappa N\}).$$
Proposition~\ref{prop:main'} together with Lemma~\ref{lem:marked} gives the following.
\begin{lem}
\label{lem:kappa}
For any $N>0$ large, we have 
 $$\nu(\{x \in X_{\le M} : |V_x|>\kappa N\}) \ll_{M,\delta}  e^{\frac{6-2\kappa-3d+3\delta}{2}N+ \frac{9N\log (c_0\log M)}{ \log M }}.$$
\end{lem}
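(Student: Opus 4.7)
The plan is to combine three ingredients already in hand: the geometric cover from Proposition~\ref{prop:main'}, the hypothesis that $\nu$ has dimension at least $d$ in the unstable direction, and the counting of labeled marked configurations from Lemma~\ref{lem:marked}. The overall strategy is to fix a configuration $\mathcal{N}\in\mathcal{M}_N$ with $|V_{x_0}|>\kappa N$, estimate $\nu(Z^+(\mathcal{N}))$ using the cover provided by Proposition~\ref{prop:main'} together with the unstable dimension bound, and then sum over $\mathcal{N}$.

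For the first step, fix $\mathcal{N}\in\mathcal{M}_N$ with $|V_{x_0}|>\kappa N$. Proposition~\ref{prop:main'} gives a cover of $Z^+(\mathcal{N})$ by at most $\ll_M e^{3N-|V_{x_0}|}c_0^{9N/\lfloor\log M\rfloor}$ sets of the form
\[
\T^{-N}(\T^N(x)u^+)\,D^{U^+}_{\frac{\eta}{2}e^{-3N/2}}\,B^{U^-C}_{\eta/2}.
\]
For the second step, I use the inclusion $D^{U^+}_{\epsilon}\subset B^{U^+}_{\epsilon}$ from \S\ref{sec:metric on U^+} and apply the dimension hypothesis \eqref{eqn:unstable} with $\epsilon=\frac{\eta}{2}e^{-3N/2}$. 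For $N$ sufficiently large (so $\epsilon<\kappa(\delta)$; the ambient $\eta$ may also need to be shrunk, or the $U^-C$ ball covered by boundedly many smaller balls, to fit into the range of validity of \eqref{eqn:unstable}, at the cost of a constant that is absorbed into $\ll_{M,\delta}$), each piece has $\nu$-measure $\ll_{\delta} \epsilon^{d-\delta}\ll_{\delta} e^{-\frac{3(d-\delta)}{2}N}$.

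Combining these for the single configuration $\mathcal{N}$ and using $|V_{x_0}|>\kappa N$ yields
\[
\nu(Z^+(\mathcal{N}))\ll_{M,\delta} e^{3N-|V_{x_0}|}e^{-\frac{3(d-\delta)}{2}N}c_0^{9N/\lfloor\log M\rfloor}\le e^{\frac{6-2\kappa-3d+3\delta}{2}N}c_0^{9N/\lfloor\log M\rfloor}.
\]
For the third step I sum this over all $\mathcal{N}\in\mathcal{M}_N$ with $|V_{x_0}|>\kappa N$. By Lemma~\ref{lem:marked} the number of such configurations is at most $\ll_M e^{10N\log\lfloor\log M\rfloor/\lfloor\log M\rfloor}$, so
\[
\nu(\{x\in X_{\le M}:|V_x|>\kappa N\})\ll_{M,\delta} e^{\frac{6-2\kappa-3d+3\delta}{2}N}\,c_0^{9N/\lfloor\log M\rfloor}\,e^{10N\log\lfloor\log M\rfloor/\lfloor\log M\rfloor}.
\]
Finally, the two $M$-dependent exponential factors combine, since $c_0^{9N/\lfloor\log M\rfloor}e^{10N\log\lfloor\log M\rfloor/\lfloor\log M\rfloor}\ll e^{9N\log(c_0\log M)/\log M}$ for $M$ large, giving the stated bound.

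The only genuine technical point is ensuring the unstable-dimension estimate applies to the pieces supplied by Proposition~\ref{prop:main'}; once the inclusion $D^{U^+}_{\epsilon}\subset B^{U^+}_{\epsilon}$ is invoked and any range-of-validity issue for the $U^-C$ ball is absorbed into the $\ll_{M,\delta}$ constant, the remainder is arithmetic bookkeeping of exponents.
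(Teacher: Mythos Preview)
Your approach is essentially identical to the paper's: cover each $Z^+(\mathcal{N})$ via Proposition~\ref{prop:main'}, bound the $\nu$-measure of each piece using the unstable-dimension hypothesis, and sum over configurations via Lemma~\ref{lem:marked}. One minor slip: Lemma~\ref{lem:marked} bounds $\#\mathcal{M}_N$ for the two-sided interval $[-N,N]$ with exponent $10N\log\lfloor\log M\rfloor/\lfloor\log M\rfloor$, whereas here the marking is on the one-sided interval $[0,N-1]$, so the correct exponent carries $5N$ in place of $10N$ (this is what the paper uses). With your $10N$ the final combination $c_0^{9N/\lfloor\log M\rfloor}e^{10N\log\lfloor\log M\rfloor/\lfloor\log M\rfloor}\ll e^{9N\log(c_0\log M)/\log M}$ fails for large $M$ (since $10>9$ in the $\log\log M/\log M$ coefficient and the discrepancy is linear in $N$, so no $M$-dependent constant can absorb it); with $5N$ the arithmetic goes through.
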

\begin{proof}
From Lemma~\ref{lem:marked} we know that the set $X_{<M}$ can be decomposed into 
$$\ll_M e^{\frac{5 N\log \lfloor \log M\rfloor}{\lfloor \log M\rfloor}}$$
 many sets of the form $Z^+(\mathcal{N})$. We are only interested in those sets of marked times $\mathcal{N}_{[0,N-1]}(x)$ for which $|V_x|>\kappa N$. On the other hand, from Proposition~\ref{prop:main'} we know that such sets can be covered by 
 $e^{(3-\kappa)N} c_0^{\frac{9N}{\lfloor \log M \rfloor}}$
 many sets of the form
 $$\T^{-N}(\T^N(x)u^+)D_{\frac{\eta}{2}e^{-3N/2}}^{U^+} B_{\frac{\eta}{2}}^{U^-C}.$$
 However, from the assumption on dimension of the measure $\nu$ we have
 $$\nu(\T^{-N}(\T^N(x)u^+)D_{\frac{\eta}{2}e^{-3N/2}}^{U^+} B_{\frac{\eta}{2}}^{U^-C}) \ll_\delta (\frac{\eta}{2}e^{-3N/2})^{d-\delta}$$
 once $N$ is sufficiently large. Thus,
 $$ \nu(\{x \in X_{\le M} : |V_x|>\kappa N\}) \ll_{M,\delta} e^{\frac{5 N\log \lfloor \log M\rfloor}{\lfloor \log M\rfloor}}e^{(3-\kappa)N} c_0^{\frac{9N}{\lfloor \log M \rfloor}}(\frac{\eta}{2}e^{-3N/2})^{d-\delta}.$$
 This simplifies to
$$\nu(\{x \in X_{\le M} : |V_x|>\kappa N\}) \ll_{M,\delta} e^{\frac{6-2\kappa-3d+3\delta}{2}N+ \frac{9N\log (c_0\log M)}{ \log M }}.$$
\end{proof}
\begin{proof}[Proof of Theorem~\ref{thm:dim}]
Note that for $d \le 4/3$ the conclusion in the theorem is trivial. Hence we assume that $d >4/3$. In order to prove Theorem~\ref{thm:dim} we need to estimate an upper bound for $\mu_N(X_{\ge M})$ for $M, N$ large. Let us recall that 
$$\mu_N=\frac{1}{N}\sum_{i=0}^{N-1}\T^i_*\nu.$$
Hence, 
\begin{align*}
\mu_N(X_{\geq M})&=\frac{1}{N}\sum_{n=0}^{N-1}\nu(\T^{-n}(X_{\geq M}))\\
&=\frac{1}{N}\sum_{n=0}^{N-1}\nu(X_{\le M} \cap \T^{-n}(X_{\geq M}))+\frac{1}{N}\sum_{n=0}^{N-1}\nu(X_{> M} \cap \T^{-n}(X_{\geq M})).
\end{align*}
However, we have $\nu(X_{> M})<\epsilon(M)$ where $\epsilon(M) \to 0$ as $M \to \infty$.
Hence,
\begin{equation}
\label{eqn:muN}
\mu_N(X_{\geq M}) \leq \epsilon(M) + \frac{1}{N}\sum_{n=0}^{N-1}\nu(X_{\le M} \cap \T^{-n}(X_{\geq M})).
\end{equation}
Thus, all we need to estimate is $\frac{1}{N}\sum_{n=0}^{N-1}\nu(X_{\le M} \cap \T^{-n}(X_{\geq M}))$. 

Now, recalling that $V_x=\{n \in [0,N-1] : \T^n(x) \in X_{\ge M}\}$ we note that
\begin{multline*}
\frac{1}{N}\sum_{n=0}^{N-1}\nu(X_{\le M} \cap \T^{-n}(X_{\geq M}))\\
=\frac{1}{N}\sum_{n=0}^{N-1} \sum_{W \subset [0,N]} \nu(\{x \in X_{\le M}: V_x=W\}\cap \T^{-n}(X_{\ge M})),
\end{multline*}
where $\nu(\{x \in X_{\le M}: V_x=W\}\cap \T^{-n}(X_{\ge M}))$ is either 0 or $\nu(\{x \in X_{\le M}: V_x=W\})$. Therefore, we switch the order of summation and get
\begin{align*}
&=\frac{1}{N}\sum_{W \subset [0,N-1]}|W|\nu(\{x \in X_{\le M}: V_x=W\})\\
&=\frac{1}{N}\sum_{i=1}^{N}i\nu(\{x \in X_{\le M}: |V_x|=i\})\\
&= \frac{1}{N}\sum_{i=1}^{\lfloor \kappa N \rfloor}i\nu(\{x \in X_{\le M}: |V_x|=i\})+\frac{1}{N}\sum_{i=\lceil \kappa N \rceil }^{N}i \nu(\{x \in X_{\le M}: |V_x|=i\})\\
&\le \frac{1}{N}\lfloor \kappa N \rfloor\nu(X_{\le M})+\frac{1}{N}N\nu(\{x \in X_{\le M}: |V_x|>\kappa N\})
\end{align*}
Let $K(M,\delta)>0$ be the implicit constant that appeared in Lemma~\ref{lem:kappa}. Then using Lemma~\ref{lem:kappa} we obtain
$$\frac{1}{N}\sum_{n=0}^{N-1}\nu(X_{<M} \cap \T^{-n}(X_{\geq M}))\le \kappa+K(M,\delta)e^{\frac{6-2\kappa-3d+3\delta}{2}N+ \frac{9N\log (c_0\log M)}{ \log M }}.$$
Thus, together with \eqref{eqn:muN} we get
\begin{equation}
\mu_N(X_{\ge M})\le \epsilon(M)+\kappa+K(M,\delta)e^{(\frac{6-2\kappa-3d+3\delta}{2}+ \frac{9\log (c_0\log M)}{ \log M })N}.
\end{equation}
By assumption we have $d>\frac{4}{3}$. Let $\kappa > \frac{6-3d}{2}$ (which we will later choose to approach $\frac{6-3d}{2}$). Now, we let $\delta>0$ to be small enough so that
$$6-2\kappa-3d+3\delta<0.$$
Let $\epsilon>0$ be given. For $M$ sufficiently large we can make sure that $\epsilon(M)<\epsilon/2$ and that $\frac{6-2\kappa-3d+3\delta}{2}+ \frac{9\log (c_0\log M)}{ \log M }<0.$
 Thus, 
 $$K(M,\delta)e^{(\frac{6-2\kappa-3d+3\delta}{2}+ \frac{9\log (c_0\log M)}{ \log M })N} \to 0$$
  as $N\to \infty$. So, we conclude that for $N$ large enough we get
  $$\mu_N(X_{\ge M})\le \kappa+\epsilon$$
  which gives in the limit that $\mu(X)>1- \kappa.$ This is true for any $\kappa >\frac{6-3d}{2}$. Thus,
  $$\mu(X)\ge 1-\frac{6-3d}{2}=\frac{3d-4}{2}.$$
\end{proof}
Next, we prove Corollary~\ref{cor:sing2}. We need the following Corollary 4.12 from \cite{Fal}.
\begin{thm}
\label{thm:Fal}
Let $F$ be a Borel subset of $\R^n$ with $0<\mathcal{H}^s(F) \le \infty.$ Then there is a compact set $E \subset F$ such that $0<\mathcal{H}^s(E)<\infty$ and a constant $b$ such that
$$\mathcal{H}^s(E\cap B_\delta({\bf r}))\le b \delta^s$$
for all ${\bf r} \in \R^n$ and $\delta>0.$
\end{thm}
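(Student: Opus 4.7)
The plan is to combine inner regularity of Hausdorff measure with the classical upper density theorem, followed by a uniformization step. First, I would reduce to the case of finite Hausdorff measure: if $\mathcal{H}^s(F)=\infty$, a standard truncation (essentially Falconer, Theorem~4.10) yields a Borel subset $F' \subset F$ with $0 < \mathcal{H}^s(F') < \infty$; then Borel inner regularity of $\mathcal{H}^s$ on sets of finite measure produces a compact $E_0 \subset F'$ with $0 < \mathcal{H}^s(E_0) < \infty$. This already takes care of the compactness and finiteness requirements on $E$; the remaining task is to arrange the density bound uniformly.

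The core analytic ingredient is the upper density theorem for Hausdorff measure: for any $E_0 \subset \R^n$ with $\mathcal{H}^s(E_0) < \infty$, one has
$$\limsup_{\delta \to 0^+} \frac{\mathcal{H}^s(E_0 \cap B_\delta(x))}{\delta^s} \le 2^s$$
for $\mathcal{H}^s$-almost every $x \in E_0$. This is the main obstacle: its proof requires a Vitali-type covering argument, but it is a standard fact that I would cite rather than reprove. It gives, for a.e.\ $x \in E_0$, a radius $\delta_x > 0$ such that $\mathcal{H}^s(E_0 \cap B_\delta(x)) \le (2^s+1)\delta^s$ whenever $\delta \in (0,\delta_x]$.

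To uniformize, I would set $E_0^{(k)} := \{x \in E_0 : \delta_x \ge 1/k\}$. These level sets exhaust $E_0$ modulo a null set, so some $E_0^{(k_0)}$ has positive Hausdorff measure; inner regularity once more extracts a compact $E \subset E_0^{(k_0)}$ with $0 < \mathcal{H}^s(E) < \infty$. By construction, $\mathcal{H}^s(E \cap B_\delta(x)) \le (2^s+1)\delta^s$ for every $x \in E$ and every $\delta \le 1/k_0$. To pass from centers in $E$ to arbitrary $\mathbf{r} \in \R^n$, note that if $B_\delta(\mathbf{r})\cap E \neq \emptyset$, choosing any $y$ in this intersection gives $B_\delta(\mathbf{r}) \subset B_{2\delta}(y)$, so the bound extends with the constant $(2^s+1)2^s$ provided $2\delta \le 1/k_0$. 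For larger $\delta$ the trivial estimate $\mathcal{H}^s(E\cap B_\delta(\mathbf{r})) \le \mathcal{H}^s(E)$ is absorbed into a constant multiple of $\delta^s$, and setting $b$ to be the maximum of the resulting constants yields the claim.
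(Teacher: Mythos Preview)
The paper does not prove this statement at all: it is quoted verbatim as Corollary~4.12 of \cite{Fal} and used as a black box in the proof of Corollary~\ref{cor:sing2}. So there is no ``paper's own proof'' to compare against.

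Your outline is correct and is essentially the argument one finds in Falconer. The reduction to finite measure, the extraction of a compact subset by inner regularity, the upper density bound $\limsup_{\delta\to 0}\mathcal H^s(E_0\cap B_\delta(x))/\delta^s\le 2^s$ for $\mathcal H^s$-a.e.\ $x$, the stratification into the sets $E_0^{(k)}$, and the passage from centers in $E$ to arbitrary centers via $B_\delta(\mathbf r)\subset B_{2\delta}(y)$ are all standard and assembled correctly. One small point worth making explicit if you write this out in full: the sets $E_0^{(k)}$ need to be Borel so that inner regularity applies again; this follows once you note that $x\mapsto \mathcal H^s(E_0\cap B_\delta(x))$ is Borel for each fixed $\delta$ and that the supremum defining membership in $E_0^{(k)}$ can be taken over a countable set of radii by monotonicity. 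Otherwise nothing is missing.
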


\begin{proof}[Proof of Corollary~\ref{cor:sing2}]
As any divergent point is also divergent on average, we get from \cite[Corollary 1.2]{Che} that the set of points $F_0\subset X$ that are divergent on average has at least dimension $\frac43+6$. 
So assume now that the Hausdorff dimension of $F_0$ is greater than $\frac{4}{3}+6.$ Then, by the behavior of Hausdorff dimension under countable unions, there is some subset $F\subset F_0$ with compact closure and small diameter for which the Hausdorff dimension is also bigger than $\frac43+6$.  Here we may assume that $F=F_0\cap (x_0D_\eta B_\eta^{U^-C})$ and that $x_0D_\eta B_\eta^{U^-C}$ is the injective image of the corresponding set in $\SL_3(\R)$. It then follows that $F=x_0 D' B_\eta^{U^-C}$
and that $D'$ has Hausdorff dimension bigger than $\frac{4}3$. Thus, for sufficiently small $\epsilon>0$ we have that $\mathcal{H}^{\frac{4}{3}+\epsilon}(D')=\infty.$ We may identify $U^+$ with $\R^2$ and apply Theorem~\ref{thm:Fal}. Therefore, there exists a compact set $E \subset D'$ such that $0<\mathcal{H}^{\frac{4}{3}+\epsilon}(E)<\infty$ and a constant $b$ such that
$$\mathcal{H}^{\frac{4}{3}+\epsilon}(E\cap B_\delta({\bf r}))\le b \delta^{\frac{4}{3}+\epsilon}$$
for all ${\bf r} \in \R^2$ and $\delta>0.$ We define $\nu_0=\frac{1}{\mathcal{H}^{\frac{4}{3}+\epsilon}(E)}\mathcal{H}^{\frac{4}{3}+\epsilon} _{|_E}$ so that $\nu_0(U^+)=1$.
Let $\tau$ be the map from $U^+$ to $X$ defined by $\tau(u)=x_0u.$ Now, we let $\nu=\tau_*\nu_0$ to be the push-forward of the measure $\nu_0$ under the map $\tau$. It follows that for any $\delta>0$ and for any $x\in X$ we have
$$\nu(xB_{\delta}^{U^+}B_\eta^{U^-C})\ll \delta^{\frac{4}{3}+\epsilon}. $$
Now, if we define $\mu_N$ as before then Theorem~\ref{thm:dim} implies that
the limit measure $\mu$ has at least $\frac{3}{2}(\frac{4}{3}+\epsilon-\frac{4}{3})\frac{3\epsilon}{2}>0$ mass left. However, 
the assumption on $F_0$ and dominated convergence applied to
\[
 \mu_N(X_{\le M})=\int\frac1N\sum_{n=0}^{N-1}\chi_{T^{-n}X_{\le M}} d\nu
\]
implies that $\mu_N(X_{\le M})\to 0$ as $N\to\infty$ for any fixed $M$. This gives a contradiction and the corollary.
\end{proof}
\bibliographystyle{plain}
\bibliography{SL3}
\end{document}